\newtheorem{lemma}{Lemma}[section]
\newtheorem{theorem}{Theorem}[section]
\newtheorem{corollary}{Corollary}[section]
\newtheorem{proposition}{Proposition}[section]
\newtheorem*{theorem*}{Theorem}
\newtheorem{thmA}{Theorem}[theorem]
\newtheorem{corB}{Corollary}[theorem]
\newtheorem{corC}{Corollary}[theorem]
\newtheorem*{summary of results}{Summary of results}
\theoremstyle{definition}
\newtheorem{remark}{Remark}[section]
\begin{document}

\author{Karl-Olof Lindahl\\
Departamento de Matem\'atica y Ciencia de la Computaci\'on\\
Universidad de Santiago de Chile\\
Av. Las Sophoras 173, Estaci\'on Central, Santiago de Chile\\
Supported by MECESUP2 (PUC-0711) and DICYT (U. de Santiago de Chile)\\ 
\texttt{jk.linden3@gmail.com}}

\title{The size of quadratic $p$-adic linearization disks\footnote{Published in \textit{Advances in Mathematics}, 248:972--894, 2013}}

\date{}
\maketitle


\begin{abstract}
We find the exact radius of  linearization disks at indifferent
fixed points of
quadratic maps in $\mathbb{C}_p$. We also show that
the radius is invariant under power series perturbations.
Localizing all periodic orbits of these quadratic-like maps we
then show that periodic points are not the only obstruction for linearization. 
In so doing, we provide the first known examples in the dynamics of
polynomials over $\mathbb{C}_p$ 
where the boundary of the linearization disk does not contain any periodic point.
\end{abstract}

\vspace{1.5ex}\noindent {\bf Mathematics Subject Classification
(2010): 37F50, 37P20, 37P05, 11S15, 32H50} 

\vspace{1.5ex}\noindent {\bf Key words:} Small divisors; Linearization; $p$-adic numbers;
Ramification; Periodic points.

\section{Introduction}


Let $p$ be a prime and let $\mathbb{C}_p$
be the completion of an algebraic closure of the
field of $p$-adic numbers $\mathbb{Q}_p$.  
We study a $p$-adic analogue of the Siegel center problem in complex
dynamics. A power series
\begin{equation*}
f(x)=\lambda x + \langle x^2\rangle \in\mathbb{C}_p[[x]],
\quad |\lambda|=1, \textrm{ not a root of unity},
\end{equation*}
has an irrationally indifferent fixed point at $x=0$,
and is said to be analytically \emph{linearizable} at $x=0$
if there exists a convergent power series solution $H_f$
to the functional equation
\begin{equation*}
H_f\circ f\circ H_f^{-1}(x)=\lambda x.
\end{equation*}
For a large class of quadratic-like maps we find the exact radius of 
the corresponding linearization disk about $x=0$.
Localizing the periodic orbits of these maps, we then show that 
the convergence of $H_f$ stops \textit{before} the appearance
of any periodic point different from $x=0$. 
%
%
%
%
Our starting point is Yoccoz study of quadratic Siegel disks \cite{Yoccoz:1995}.
Let $\alpha$ be irrational and $\{p_n/q_n\}_{n\geq 0}$
be the approximants given by its continued fraction expansion.
The Brjuno series $B(\alpha)$ is defined by
$B(\alpha)=\sum_{n\geq 0}\log(q_{n+1})/q_n$.
Yoccoz proved that
$P_{\alpha}(z)=e^{2i\pi\alpha}z+z^2\in\mathbb{C}[z]$ is linearizable at $z=0$
if and only if $B(\alpha)<\infty$.
If $B(\alpha)<\infty$
and $C(\alpha)$ is the conformal radius of the Siegel disk, then there exist
universal constants $C_1$ and $C_2$ such that
$C_1<B(\alpha)+\log C(\alpha)<C_2$.
The lower bound is due to Yoccoz \cite{Yoccoz:1995}, and the upper bound
is due to Buff and Ch\'eritat \cite{BuffCheritat:2004}.
A possible obstruction for linearization is the presence of periodic points
different from zero. Indeed, Yoccoz proved that
if $\alpha$ is irrational and $B(\alpha)=\infty$, then
every neighborhood of $z = 0$ contains a periodic orbit of $P_{\alpha}$
different from $z = 0$. However, as shown by P\'erez-Marco
\cite[TheoremV.4.2]{Perez-Marco:1997}, in complex dynamics there exist
maps having no periodic point on the boundary of a Siegel disk.
Below, we present an analogue of the Yoccoz Brjuno function, $\tilde{r}(\lambda)$, 
that estimates the radius of $p$-adic linearization disks (Theorem \ref{thmA general estimate}).
Using the optimal bound obtained for $p$-adic quadratic maps
(Theorem \ref{thmA quadratic maps}),
we obtain an analogue of P\'erez-Marco's result on non-existence of periodic points on 
the boundary (Corollary \ref{corC no periodic boundary}),
providing the first  $p$-adic examples of its kind.


Let $p$ be a prime and denote by $\mathcal{O}_p$ the closed unit disk in $\mathbb{C}_p$. 
For any $\lambda\in\mathbb{C}_p$ with $|\lambda|=1$ but
$\lambda $ not a root of unity, we define
\begin{equation*}
\mathcal{G}_{\lambda}:=
\lambda x+x^2\mathcal{O}_p[[x]].
\end{equation*}
By the Non-Archimedean Siegel Theorem of
Herman and Yoccoz \cite{Herman/Yoccoz:1981},
$f\in\mathcal{G_{\lambda}}$
is always linearizable at $x=0$.
Given $f\in\mathcal{G_{\lambda}}$, we denote by $\Delta_f$ the corresponding
\emph{linearization disk},
\textit{i.e.} the maximal disk about the origin on which $f$ is
analytically conjugate to
$T_{\lambda}:x\to \lambda x$.
We denote by $r(f)$ the exact radius of $\Delta_f$ and
introduce a function $\tilde{r}=\tilde{r}(\lambda)$
that estimates $r(f)$ from below. 
There is an explicit formula for 
$\tilde{r}(\lambda)$, to be given in formula (\ref{definition tilde r})
in Section \ref{estimates of linearization disks}, with the
properties stated in Theorem  \ref{thmA general estimate}
and \ref{thmA quadratic maps} below.

\begin{thmA}
\label{thmA general estimate}
Let $f\in\mathcal{G}_{\lambda}$, then $r(f)\geq\tilde{r}(\lambda)$.
\end{thmA}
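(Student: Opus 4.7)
The plan is to construct the linearizing conjugacy as a formal power series $H_f(x)=x+\sum_{n\ge 2}h_n x^n$ solving $H_f\circ T_\lambda=f\circ H_f$ with $H_f'(0)=1$, and to estimate its $p$-adic radius of convergence from below by $\tilde r(\lambda)$. The ultrametric property will guarantee that this convergence radius is in fact the radius of the linearization disk, since any power series tangent to the identity whose non-linear part satisfies $|H_f(x)-x|<|x|$ on a disk is automatically an isometric bijection of that disk onto itself.

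First I would expand $f(x)=\lambda x+\sum_{k\ge 2}f_k x^k$ with $|f_k|\le 1$ (as $f\in\mathcal{G}_\lambda$) and compare coefficients in the functional equation to obtain the classical recursion
\begin{equation*}
(\lambda^n-\lambda)\,h_n
=\sum_{k=2}^{n}f_k\!\!\sum_{\substack{n_1+\cdots+n_k=n\\ n_i\ge 1}}\!\!h_{n_1}\cdots h_{n_k},\qquad h_1=1.
\end{equation*}
Because $\lambda$ is not a root of unity, $\lambda^n-\lambda\neq 0$ for every $n\ge 2$, so the $h_n$ are uniquely determined. The ultrametric inequality together with $|f_k|\le 1$ then gives
\begin{equation*}
|h_n|\;\le\;|\lambda^{n-1}-1|^{-1}\max_{\substack{2\le k\le n\\ n_1+\cdots+n_k=n}}|h_{n_1}|\cdots|h_{n_k}|.
\end{equation*}

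Next, I would iterate this inequality all the way down to $h_1=1$. Each step corresponds to refining a partition of $n$, so $|h_n|$ is ultimately majorised by a product of small-divisor factors $|\lambda^{\ell}-1|^{-1}$ indexed by the interior nodes of a plane tree with $n$ leaves. The main technical step is a combinatorial bookkeeping on such trees: for each $\ell<n$ one bounds the total multiplicity with which $|\lambda^{\ell}-1|^{-1}$ can appear, and verifies that the resulting product matches the weights in the explicit formula (\ref{definition tilde r}) for $\tilde r(\lambda)$. This should yield $\limsup_n|h_n|^{1/n}\le\tilde r(\lambda)^{-1}$, so that $H_f$ converges on the open disk $\{|x|<\tilde r(\lambda)\}$ with $|H_f(x)-x|<|x|$ there, whence $H_f$ conjugates $f$ to $T_\lambda$ bijectively on this disk and $r(f)\ge\tilde r(\lambda)$.

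The main obstacle I anticipate is precisely this combinatorial bookkeeping. The ultrametric replaces sums by maxima, which is a genuine simplification compared to the complex Siegel setting, but one still has to track carefully how each level $\ell<n$ is visited when the recursion is unfolded, and extract the sharp exponent that matches the Brjuno-type quantity defining $\tilde r(\lambda)$ rather than a looser bound of Siegel--Herman--Yoccoz type. Verifying bijectivity of $H_f$ on the disk is a secondary issue, but follows automatically from the same coefficient estimate once it is in place.
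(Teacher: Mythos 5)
Your overall plan---bound the coefficients of the conjugacy, deduce a radius of convergence, and use ultrametric rigidity to conclude that the conjugacy is a bijection of that disk---is the same skeleton as the paper's proof. But there are two substantive gaps in the way you propose to fill it in.

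First, you chose the ``wrong'' direction of the functional equation. You solve $H_f\circ T_\lambda=f\circ H_f$, which yields a recursion with \emph{products} $h_{n_1}\cdots h_{n_k}$ on the right-hand side. Iterating then produces a maximum over plane trees, and the ``combinatorial bookkeeping'' you defer is genuinely delicate: the same small-divisor factor $|1-\lambda^\ell|^{-1}$ can occur several times in a single tree (two disjoint subtrees of the same size contribute the same factor), and one needs an honest argument that the worst tree is the caterpillar, i.e.\ that every partition $n_1+\cdots+n_k=n$ satisfies $\prod_i\prod_{j<n_i}|1-\lambda^j|^{-1}\le\prod_{j<n-1}|1-\lambda^j|^{-1}$. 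That inequality is plausible for these ultrametric multipliers, but it is not a bookkeeping triviality and you have not indicated how to prove it. The paper sidesteps the whole issue by solving $H_f\circ f=\lambda H_f$ instead (Lemma \ref{lemma bk estimate indiff}): there the recursion (\ref{bk-equation}) is \emph{linear} in the previous coefficients, with the $b_l$ appearing only to the first power, so a single application of ultrametricity and a trivial induction gives $|b_k|\le\bigl(\prod_{n=1}^{k-1}|1-\lambda^n|\bigr)^{-1}$ with no tree combinatorics at all. This choice of direction is not cosmetic; it is what makes the coefficient estimate immediate.

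Second, you compress the real content of the theorem into the phrase ``verifies that the resulting product matches the weights in the explicit formula (\ref{definition tilde r})''. Passing from $\prod_{n=1}^{k-1}|1-\lambda^n|^{-1}$ to a bound of the form $C\cdot\tilde r(\lambda)^{-(k-1)}$ is Lemma \ref{lemma theorem general estimate}, which occupies most of Section \ref{estimates of linearization disks} and rests on the arithmetic of $\lambda$ developed in Section \ref{section geometry of unit sphere} (Lemmas \ref{lemma closest root of unity} and \ref{lemma distance 1-lambda m char 0,p}, the split of $\prod|1-\lambda^n|$ according to $\nu(n)$, the estimate of $\nu(M!)$, etc.). Moreover, for the final bijectivity step one needs not merely $\limsup|b_k|^{1/k}\le\tilde r(\lambda)^{-1}$ but the strict inequality $|b_k|\,\tilde r(\lambda)^k<\tilde r(\lambda)$ for every $k\ge 2$ (Remark \ref{remark strict}), which is exactly what lets Proposition \ref{proposition one-to-one} be applied on both the open and the closed disk. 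Your appeal to ``$|H_f(x)-x|<|x|$'' is morally the same criterion, but you do not establish the strict coefficient inequality, and without it the argument only gives bijectivity on a slightly smaller disk.

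In short: the architecture is right, but you need to (i) switch to the forward functional equation $H_f\circ f=\lambda H_f$ to avoid the tree combinatorics altogether, and (ii) actually carry out the $p$-adic arithmetic that turns the product of small divisors into the explicit quantity $\tilde r(\lambda)$, including the strict form of the estimate required for Weierstrass-preparation bijectivity.
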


The proof is based on estimates of the coefficients of the
conjugacy function $H_f$ and an application of the Weierstrass Preparation Theorem.
As our main result, refining the estimates of $H_{f}$ for quadratic maps,
we obtain the exact size of quadratic linearization disks.

\begin{thmA}
\label{thmA quadratic maps}
Let $p\geq 3$ and $\lambda\in\mathbb{C}_p$ be not a root of unity and put
\[
P_{\lambda}(x):=\lambda x +x^2\in\mathbb{C}_p[x], \quad
\textrm{where  $1/p<|1-\lambda|<1$.}
\]
Then, the linearization disk $\Delta_{P_{\lambda}}$ is the open disk
$D_{r(P_{\lambda})}(0)$ 
where
$r(P_{\lambda})=|1-\lambda|^{-1/p}\tilde{r}(\lambda)$.
\end{thmA}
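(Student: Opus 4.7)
The plan is to pin down the exact radius of convergence of the linearization series and then conclude from standard non-Archimedean arguments that the linearization disk $\Delta_{P_\lambda}$ coincides with the open disk of that radius. Write $H_f(x)=\sum_{n\ge 1}a_n x^n$ with $a_1=1$ for the conjugacy solving $H_f(\lambda x)=P_\lambda(H_f(x))$. Matching coefficients yields the exact bilinear recursion
\begin{equation*}
(\lambda^n-\lambda)\,a_n=\sum_{\substack{i+j=n\\ i,j\ge 1}}a_i a_j, \qquad n\ge 2,
\end{equation*}
so $a_n$ unfolds as a sum, over a binary tree, of products of earlier coefficients weighted by reciprocals of small divisors $\lambda^{m-1}-1$. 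The generic estimate underlying Theorem~\ref{thmA general estimate} is obtained by a uniform upper bound on each such product and only gives $|a_n|\le\tilde r(\lambda)^{-(n-1)}$. The refined estimate needed here should extract an extra geometric factor $|1-\lambda|^{n/p}$ by exploiting that, in the regime $1/p<|1-\lambda|<1$, the quantity $|\lambda^m-1|$ has a predictable additional drop whenever $p\mid m-1$. Amortising this drop along each branch of the binary tree yields the improved bound $|a_n|\le|1-\lambda|^{n/p}\tilde r(\lambda)^{-(n-1)}$ and hence $r(P_\lambda)\ge|1-\lambda|^{-1/p}\tilde r(\lambda)$.

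For the matching upper bound I would identify an explicit arithmetic subsequence $(n_k)$ along which the recursion is, in the ultrametric sense, dominated by a single branch that repeatedly uses the sharpest possible small divisor. On such a subsequence the non-Archimedean triangle inequality precludes cancellation, so the a-priori upper bound on $|a_{n_k}|$ becomes an equality; an induction then gives $|a_{n_k}|^{1/n_k}\to(|1-\lambda|^{-1/p}\tilde r(\lambda))^{-1}$. Consequently the radius of convergence of $H_f$ is exactly $|1-\lambda|^{-1/p}\tilde r(\lambda)$. That $\Delta_{P_\lambda}$ is the \emph{open} disk of this radius, rather than a proper subdisk or its closure, follows because a non-Archimedean power series tangent to the identity is automatically a bijection of its open disk of convergence onto itself, and on the bounding sphere the coefficient estimate is attained.

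The main obstacle is the combinatorial analysis of the binary tree of the recursion. Tracking how many deep drops of the small divisor occur along competing branches, and showing that on the extremal subsequence $(n_k)$ the dominant branch survives ultrametrically without cancellation, requires a delicate induction with cases split according to the $p$-adic valuation of the intermediate indices $m$. Once both estimates are in place, they combine to give the exact equality $r(P_\lambda)=|1-\lambda|^{-1/p}\tilde r(\lambda)$ asserted in the theorem. As a by-product, comparing this value with the Newton-polygon location of the primitive period-$p$ orbit of $P_\lambda$ (to be used for Corollary~\ref{corC no periodic boundary}) will measure exactly how far the true radius falls short of the nearest nontrivial periodic point.
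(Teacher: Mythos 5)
Your overall architecture---pin down the exact growth of the conjugacy coefficients, deduce convergence on the open disk of radius $\tau=|1-\lambda|^{-1/p}\tilde r(\lambda)$ and divergence on $S_\tau(0)$, then conclude via the non-Archimedean mapping theorem---matches the paper's. But you work with the \emph{inverse} functional equation $H(\lambda x)=P_\lambda(H(x))$ and its bilinear recursion $(\lambda^n-\lambda)a_n=\sum_{i+j=n}a_ia_j$, and your mechanism for the refined upper bound $|a_n|\le|1-\lambda|^{(n-1)/p}\tilde r(\lambda)^{-(n-1)}$ (``amortising the drop along each branch of the binary tree'') cannot work. Unfolding the recursion writes $a_n$ as an ultrametric sum over binary trees, and the single ``caterpillar'' tree already contributes a term of absolute value $\prod_{m=1}^{n-1}|1-\lambda^m|^{-1}$, which is exactly the unimproved general bound; no term-by-term estimate can beat the largest term of an ultrametric sum, so the extra factor $|1-\lambda|^{\lfloor(n-1)/p\rfloor}$ must come from \emph{cancellation} between competing terms of equal size. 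This happens already at $n=4$ for $p=3$: $2a_1a_3$ and $a_2^2$ both have absolute value $|1-\lambda|^{-2}$, while their sum equals $\frac{\lambda+5}{\lambda+1}\cdot\frac{1}{\lambda^2(\lambda-1)^2}$ and has absolute value $|1-\lambda|^{-1}$, precisely because $|6|=1/3<|1-\lambda|$. Controlling such cancellations exactly for every $n$ with $p\mid n-1$, over the growing set of maximal pairs $(i,j)$, is the entire content of the theorem, and your proposal defers it both in the upper bound and in the ``no cancellation along a subsequence'' claim needed for divergence on the sphere. (Also, a power series tangent to the identity is \emph{not} automatically a bijection of its open disk of convergence; one must verify the coefficient inequalities $|c_k|\rho^k\le\rho$ of Proposition \ref{proposition one-to-one}, which is a separate step.)

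The paper sidesteps the bilinear cancellation problem by using the direct equation $H\circ P_\lambda=\lambda H$, whose recursion is \emph{linear} in the unknown coefficients with explicit weights $C_l(k)=\binom{l}{k-l}\lambda^{2l-k}$. An induction shows $|b_k|$ is strictly increasing, so only the top terms compete: when $p\nmid k-1$ the term $l=k-1$ dominates outright, and when $p\mid k-1$ its weight $(k-1)\lambda^{k-2}$ has absolute value at most $1/p<|1-\lambda|$, so the term $l=k-2$, whose weight $\tbinom{k-2}{2}\lambda^{k-4}$ is a unit for $p\ge3$, dominates instead. This yields the exact formula $|b_k|=|1-\lambda|^{\lfloor(k-1)/p\rfloor}\big/\prod_{n=1}^{k-1}|1-\lambda^n|$ (Lemma \ref{lemma quadratic polynomials}), from which convergence on $D_\tau(0)$, the bijectivity inequalities, and divergence along $k=p^I+1$ all follow using the product evaluations of Lemma \ref{lemma theorem general estimate}. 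If you wish to salvage the route through the inverse series, you would need an exact cancellation lemma for the bilinear recursion, which is substantially harder than the paper's two-term dominance argument.
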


In view of Theorem \ref{thmA quadratic maps},
the general estimate $\tilde{r}(\lambda)$ is nearly optimal.
The work lies in
obtaining the optimal estimates of the coefficients of $H_{P_{\lambda}}$
carried out in Section \ref{estimates of linearization disks} and \ref{section quadratic case}.
To our knowledge, this is the first known case in $\mathbb{C}_p$
where the exact linearization disk is
known without having an explicit formula for the conjugacy $H_{f}$. In fact,
the radius $r(P_{\lambda})$ is invariant under power series perturbations.

\begin{corB}
\label{corB quadratic power series}
Let $p\geq 3$ and $f\in\mathcal{G}_{\lambda}$, with $1/p<|1-\lambda|<1$, 
be of the form
\begin{equation}\label{form II quadratic}
f(x)=\lambda x +\sum_{i=2}^{\infty}a_ix^i\in\mathcal{O}_p[[x]],
\quad \textrm{where  $|a_2|=1$ and  $|a_2^2-a_3|=1$.}
\end{equation}
Then, the linearization disk $\Delta_{f}$
equals the disk $\Delta_{P_{\lambda}}$ of radius
$r(P_{\lambda})=|1-\lambda|^{-1/p}\tilde{r}(\lambda)$.
\end{corB}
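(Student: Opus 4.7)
The plan is to reduce Corollary~B to Theorem~A.2 in two steps: first normalize $f$ so that the quadratic coefficient is exactly $1$, then compare the Taylor coefficients of the linearizers $H_{f}$ and $H_{P_\lambda}$.

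First I would apply the affine change of coordinates $\phi(x) = x/a_2$. Because $|a_2| = 1$, the map $\phi$ is an isometric automorphism of $\mathbb{C}_p$ fixing the origin, so it preserves the radii of all disks about $0$. The conjugate
\[
\tilde f(x) := \phi^{-1}(f(\phi(x))) = \lambda x + x^2 + \sum_{i \geq 3} \tilde a_i\, x^i, \qquad \tilde a_i = a_i/a_2^{i-1},
\]
has $\tilde a_i \in \mathcal{O}_p$, and by the hypothesis $|a_2^2 - a_3| = 1$ also satisfies $|1 - \tilde a_3| = 1$. Since $\Delta_{\tilde f}$ and $\Delta_f$ share the same radius, it suffices to prove the statement for $\tilde f$, which is a power series perturbation of $P_\lambda$ by terms of order $\geq 3$.

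Second, compare the coefficients $b_n$ of $H_{\tilde f}(x) = x + \sum_{n\geq 2} b_n x^n$ with those $B_n$ of $H_{P_\lambda}$. The functional equation $H_{\tilde f} \circ \tilde f = \lambda H_{\tilde f}$ yields the recursion
\[
(\lambda - \lambda^n)\, b_n = \tilde a_n + \sum_{k=2}^{n-1} b_k \sum_{\substack{i_1 + \cdots + i_k = n \\ i_j \geq 1}} \tilde a_{i_1}\cdots \tilde a_{i_k},
\]
with the convention $\tilde a_1 = \lambda$. The analogous recursion for $B_n$ is obtained by restricting to $\tilde a_2 = 1$ and $\tilde a_i = 0$ for $i \geq 3$. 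By induction on $n$ and the non-archimedean triangle inequality, I would argue that $|b_n| = |B_n|$ for all $n$: every combinatorial term in the sum involving some $\tilde a_i$ with $i \geq 3$ has absolute value at most that of the pure $P_\lambda$-contribution, \emph{except} for the cubic coefficient, where the hypothesis $|1-\tilde a_3| = 1$ is exactly what prevents a cancellation with a piece of the iterated quadratic contribution that would otherwise shrink $|b_n|$ and enlarge $r(\tilde f)$ past $r(P_\lambda)$. Since the proof of Theorem~A.2 identifies $r(P_\lambda)$ as the radius dictated by the growth rate of $|B_n|$, the identity $|b_n| = |B_n|$ yields $r(\tilde f) = r(P_\lambda)$, and by the first step $\Delta_f = \Delta_{P_\lambda}$.

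The main obstacle will be the $p$-adic bookkeeping in Step~2, in particular handling the cubic contribution: one must isolate the exact index $n$ at which $\tilde a_3$ first competes with the dominant iterated quadratic term in the recursion, and verify that $|1-\tilde a_3|=1$ is precisely what rules out a drop in $|b_n|$ there. Contributions from $\tilde a_i$ with $i \geq 4$ should be absorbed by the estimates already developed for Theorem~A.2, since one only needs $|\tilde a_i|\leq 1$ together with the small-divisor bounds on $|\lambda^n - \lambda|$ that control $|B_n|$.
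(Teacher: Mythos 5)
Your plan is correct and follows essentially the same route as the paper: one shows by induction, via the recursion for the coefficients of the linearizer and ultrametricity, that $|b_k|=|B_k|$ for all $k$, the only delicate point being the coefficient of $b_{k-2}$ when $p\mid k-1$, where the hypothesis $|a_2^2-a_3|=1$ (your $|1-\tilde a_3|=1$) rules out the cancellation between the iterated quadratic and the cubic contributions; the remaining terms are handled exactly as in the quadratic case since $|\tilde a_i|\le 1$. The preliminary normalization $x\mapsto x/a_2$ is a harmless cosmetic difference — the paper keeps $a_2$ general and verifies $|C_{k-1}(k)|=|a_2|=1$ and $|C_{k-2}(k)|=|a_2^2-\lambda a_3|=1$ directly.
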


By Corollary  \ref{corB quadratic power series},
the exact radius $r(f)=|1-\lambda|^{-1/p}\tilde{r}(\lambda)$
depends only on $\lambda$ for $f$ in this family. 
Also note the we only impose an extra condition on the cubic term of $f$ for the generalization to hold.
In fact, this condition  is the
same as the condition for $f$ to be minimally ramified \cite{Rivera-Letelier:2003thesis}.
Using Corollary  \ref{corB quadratic power series}
and localizing all periodic orbits of $f$,
we prove the non-existence of periodic points on the boundary 
$\partial\Delta_{f}:=\{x\in\mathbb{C}_p: |x|=r(f)\}$.
\begin{corC}\label{corC no periodic boundary}
Let $p\geq 3$ and let $f$ be of the form (\ref{form II quadratic}).
Then,
the boundary
$\partial\Delta_{f}$ of the linearization disk does not contain any periodic point of $f$.
\end{corC}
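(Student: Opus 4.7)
The plan is to combine Corollary~\ref{corB quadratic power series}, which gives the exact radius of $\Delta_f$, with a Newton polygon analysis of the iterates $f^{\circ n}(x)-x$ that pins down the absolute values of all periodic orbits of $f$. By Corollary~\ref{corB quadratic power series}, $\Delta_f=\Delta_{P_{\lambda}}$ is the open disk of radius $r(f)=|1-\lambda|^{-1/p}\tilde r(\lambda)$, and on $\Delta_f$ the map $f$ is analytically conjugate to $T_\lambda\colon x\mapsto\lambda x$. Since $\lambda$ is not a root of unity, $T_\lambda$ has no periodic points other than $0$, so Corollary~\ref{corC no periodic boundary} reduces to the claim that no nonzero periodic point $x$ of $f$ satisfies $|x|=r(f)$.

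To establish this, I would locate every periodic point of $f$ lying outside $\Delta_f$. The normalization $|a_2|=|a_2^2-a_3|=1$ in (\ref{form II quadratic}) is precisely the minimally ramified condition of Rivera-Letelier, so the iterates of $f$ have a well-controlled structure. Applying the Newton polygon to the power series $f^{\circ n}(x)-x$ for each $n\geq 1$ and tracking how the dominant terms propagate from $f$ under this hypothesis, one obtains a discrete list of absolute values $\rho_1<\rho_2<\cdots$ that the periodic points of $f$ can take. For instance, the only fixed point of $f$ other than $0$ lies at $|x|=|1-\lambda|$ (since $|a_2|=1$), giving $\rho_1=|1-\lambda|$; the higher levels $\rho_n$ will be of the form $|1-\lambda|^{e_n}$ for rational exponents $e_n$ determined by the ramification combinatorics of $\lambda^{p^k}-1$ along the iterate. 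One then checks that $r(f)=|1-\lambda|^{-1/p}\tilde r(\lambda)$, whose $|1-\lambda|$-exponent carries the fractional contribution $-1/p$, falls strictly between two consecutive $\rho_n$ and $\rho_{n+1}$, so no periodic point of $f$ can sit on $\partial\Delta_f$.

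The main obstacle is the second step: the Newton polygon analysis of $f^{\circ n}(x)-x$ must be carried out uniformly in $n$, propagating the minimally ramified normalization through each composition and controlling the error terms, in order to produce the explicit list $\{\rho_n\}$. The minimally ramified hypothesis reduces this to a combinatorial computation, but pinning down the exact rational exponents $e_n$ and showing that the list is discrete takes some care. Once these levels are known, the concluding comparison with $r(f)$ is a direct arithmetic check using the explicit formula for $\tilde r(\lambda)$ from Section~\ref{estimates of linearization disks}, confirming that the bare $1/p$ arising in $-1/p$ is not attained by any $e_n$.
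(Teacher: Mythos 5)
Your plan is essentially the paper's proof: use Corollary~\ref{corB quadratic power series} for the exact radius, exploit the minimally ramified condition and the Newton polygons of the iterates to localize all periodic orbits on a discrete set of spheres, and then check arithmetically that $r(f)$ misses all of them. The paper carries out exactly the Newton polygon analysis you sketch (restricting to iterates $f^{p^n}$ after first noting that all periods are $p$-powers, and deriving explicit slopes $\kappa_n$ from $\nu(1-\lambda^{p^n})$), arrives at the levels you call $\rho_n$ (Lemma~\ref{lemma localization of periodic points}), and concludes with the inequality $r(f)<\rho=\min\{|1-\lambda|,\Psi(\lambda)\}$ rather than an interleaving comparison, but this is a presentational detail, not a different idea.
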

\begin{remark}
The proof of Corollary \ref{corC no periodic boundary} induces a stronger version of the Corollary itself in the sense that
in Lemma \ref{lemma localization of periodic points} we localize all periodic orbits
for every $f(x)=\lambda x +a_2x^2+a_3x^3+\dots \in\mathcal{G}_{\lambda}$,
such that $0<|1-\lambda|<1$, $|a_2|=1$ and $|a_2^2-a_3|=1$, using that such maps are
minimally ramified \cite{Rivera-Letelier:2003thesis}.
\end{remark}
Corollary \ref{corC no periodic boundary},
similar in its flavor to the complex field
case result of P\'erez-Marco \cite[TheoremV.4.2]{Perez-Marco:1997},
reveals a new phenomenon in the dynamics of $p$-adic polynomials.
In contrast to our result obtained for quadratic-like maps,
the $p$-adic power functions, which are the only previously known examples where the exact size of the linearization disk is known,  
all have periodic points on the boundary of the
linearization disk \cite{Lubin:1994,Arrowsmith/Vivaldi:1994},
see figure \ref{figure periodic P} below.
For $g(x)=a_0+a_1x+\dots\in\mathcal{O}_p[[x]]$, denote by wideg$(g)$ the
\textit{Weierstrass degree}
of $g$, \textit{i.e.} the smallest number $d\in \{0,1,\dots\}\cup\{\infty\}$
such that $|a_d|=1$.
If $d$ is finite, then $d$ is the number of zeros of $g$ in the open unit disk,
counting multiplicity.
In fact,  in view of Rivera-Letelier's classification of $p$-adic Siegel disks
\cite{Rivera-Letelier:2003thesis}, every polynomial $f\in\mathcal{G}_{\lambda}$
with $2\leq$wideg$(f-\textrm{id})<\infty$, has infinitely many periodic points
in the open unit disk.
From this point of view, and the results obtained for power maps, it may be tempting to conjecture that a generic
$f\in\mathcal{G}_{\lambda}$, with $2\leq$wideg$(f-\textrm{id})<\infty$, has a periodic point on $\partial\Delta_f$.
Corollary \ref{corC no periodic boundary}
shows that such a conjecture is indeed false.

The paper is organized as follows.
In Section \ref{preliminaries} 
we present preliminaries concerning power series and the formal solution,
and in Section \ref{section geometry of unit sphere} we consider the
geometry of the unit sphere and the arithmetic of the multiplier
$\lambda$. 
Sections \ref{estimates of linearization disks} and 
\ref{section quadratic case} are devoted to 
explicit, and in several cases optimal, bounds
of the conjugacy function $H_f$,
from which we obtain Threorem \ref{thmA general estimate}
and \ref{thmA quadratic maps} respectively.
In Section \ref{section corollary A} we give a proof of Corollary
\ref{corC no periodic boundary}. 
Section \ref{section corollary A} ends with an example illustrating the main idea.

We end this introduction by some remarks given below.
For some additional references on non-Archimedean dynamics and its
relationship, similarities, and differences with respect to the
Archimedean theory of complex dynamics, see
\cite{Silverman:2007,
AnashinKhrennikov:2009,Khrennikov/Nilsson:2004,
KhrennikovSvensson:2007,
GhiocaTuckerZieve:2008} and references therein.

\subsection{Further remarks}

\begin{remark}
Our estimate of $r(f)$ in Theorem  \ref{thmA general estimate} extends results
obtained for
quadratic polynomials over $\mathbb{Q}_p$ by
Ben-Menahem \cite{Ben-Menahem:1988},
and Thiran, Verstegen, and Weyers \cite{Thiran/EtAL:1989},
and for certain polynomials
with maximal multipliers over the $p$-adic integers $\mathbb{Z}_p$
by Pettigrew, Roberts and Vivaldi \cite{Pettigrew/Roberts/Vivaldi:2001},
as well as results on small divisors by Khrennikov \cite{Khrennikov:2001a}.
\end{remark}

\begin{remark}\label{remark upper bound}
If $f\in\mathcal{G}_{\lambda}$ and $2\leq$wideg$(f-\textrm{id})<\infty$,
then we also have the upper bound $r(f)<1$, since in this case $f$ has at
least one periodic point $x\neq 0$ in the open unit disk.

\end{remark}

\begin{remark}\label{remark critical points}
There is a number of results on the existence of critical points and/or failing of
injectivity on the boundary of complex quadratic linearization disks
since the work of Herman \cite{Herman:1985}.
See \cite{CheritatRoesch:2011,Zhang:2012} for more results and references.
Another possible obstruction for linearization would be the presence of transient points,
\textit{i.e.} points on $\partial \Delta_f$ that escape $\partial \Delta_f$
after some iterations. But neither of these obstructions occur for
$f\in\mathcal{G}_{\lambda}$ with $2\leq$wideg$(f-\textrm{id})<\infty$;
by Remark \ref{remark upper bound}, $r(f)<1$, and since $f$ is isometric on the open 
unit disk, it is also isometric on  $\partial\Delta_f$.
Hence, it remains open whether or not
there is a `dynamical' obstruction for linearization on $\partial\Delta_f$
for $f$ of the form (\ref{form II quadratic}).
\end{remark}

\begin{remark}
In contrast to the complex field case \cite{Carleson/Gamelin:1991,Milnor:2000},
the boundary of the linearization disk
may not be contained in the topological closure of the post-critical set, see
Remark \ref{remark post-critical}.
\end{remark}

\begin{remark}
For irrationally indifferent fixed points, the conjugacy function is related to
the iterative Lie-logarithm, $f_*=\lim_{n\to\infty}(f^{\circ p^n}-\textrm{id})/{p^n}$.
Formally,  $H_f=\exp(\int\log(\lambda)/f_*)$.
See \cite{Lubin:1994,Li:1996c,Rivera-Letelier:2003thesis} and references therein
for studies of $f_*$ and its connection to the dynamics of $f$.
\end{remark}

\begin{remark}\label{remark siegel disks}
There are two possible definitions of a Siegel disk in the $p$-adic setting;
the linearization disk, or the maximal domain of quasi-periodicity.
In contrast to the complex field case, they do not coincide in the $p$-adic setting
since $p$-adic domains of quasi-periodicity
contain periodic points \cite{Rivera-Letelier:2003thesis}.
It is standard in $p$-adic dynamics to use the latter definition.
\end{remark}

\begin{remark}
In the multi-dimensional $p$-adic case, there exist multipliers $\lambda$ such that the corresponding
Siegel condition is violated and the conjugacy diverges
\cite{Herman/Yoccoz:1981,Viegue:2007}.
In fields of
positive characteristics, the convergence of the linearization series is far from trivial
even in the one-dimensional case \cite{Lindahl:2004,Lindahl:2010}.
For results in fields of caracteristic zero--equal characteristic case,
see \cite{Lindahl:2009eq}.
In the hyperbolic case $|\lambda|\neq1,0$, the linearization disk 
will in general be the maximal disk of injectivity \cite{LindahlZieve:2010}.
\end{remark}

\section{Mapping properties and the formal solution}\label{preliminaries}

By definition, $\mathbb{C}_p$ is a non-Archimedean field, 
\textit{i.e.} complete with respect to a non-trivial absolute value $|\cdot |$,
satisfying the  following
ultrametric triangle inequality:
\begin{equation}\label{sti}
|x+y| \leq  \max[|x|,|y|],\quad\text{for all $x,y\in \mathbb{C}_p$}.
\end{equation}
One useful consequence of ultrametricity is that for any
$x,y\in\mathbb{C}_p$ with $|x|\neq |y|$, the inequality (\ref{sti}) becomes an
equality.
The absolute value on $\mathbb{C}_p$ is normalized 
so that $|p|=1/p$.  For $x\in\mathbb{C}_p$ we denote
by $\nu(x)$ the \textit{valuation} of $x$, \textit{i.e.}
\[
\nu(x):=-\log_p|x|, \quad  \textrm{where $\nu(0):=\infty$}.
\]

Given an element
$x\in \mathbb{C}_p$ and real number $r>0$ we denote by $D_{r}(x)$ the open
disk of radius $r$ about $x$, by $\overline{D}_r(x)$ the closed
disk, and by $S_{r}(x)$ the sphere of radius $r$ about $x$.
Put $\mathbb{C}_p^*:=\mathbb{C}_p\setminus\{0\}$.
If $r\in|\mathbb{C}_p^*|$, \textit{i.e.} if there exist $a\in\mathbb{C}_p^*$
such that $|a|=r$,  we say that $D_{r}(x)$ and
$\overline{D}_r(x)$ are \emph{rational}.
If $r\notin |\mathbb{C}_p^*|$,
then we will call $D_{r}(x)=\overline{D}_r(x)$ an
\emph{irrational} disk.
Note that all disks are both open and closed as
topological sets, because of ultrametricity. However, as we will see
below,
power series distinguish between rational open, rational closed, and irrational disks.

\subsection{Univalent mappings}\label{section non-Archimedean power series}

In this paper we consider univalent mappings fixing zero.
Let $a>0$ be a real number and put
\begin{equation*}
\mathcal{U}_a:=\left\{ \sum_{i\geq 1}a_ix^{i}\in\mathbb{C}_p[[x]]: |a_1|=1 \textrm{, and }
1/\sup_{i\geq 2} |a_i|^{1/(i-1)}\geq a \right\}.
\end{equation*}
Then, $h\in\mathcal{U}_a$ converges on the open disk $D_{\rho(h)}(0)$ of radius
\begin{equation}\label{radius of convergence}
\rho(h) := \frac{1}{\limsup |a_i| ^{1/i}}\geq a.
\end{equation}
A power series $h\in\mathcal{U}_a$ converges on the sphere
$S_{\rho(h)}(0)$ if and only if
\[
\lim_{i\to\infty}|a_i| \rho(h) ^i=0.
\]
It is well known that all $h\in\mathcal{U}_a$ are univalent and isometric on the open disk $D_a(0)$. 
For future reference, we state these facts in the following proposition.

\begin{proposition}\label{proposition isometry}
Let $h\in\mathcal{U}_a $. Then, $h:D_a(0)\to D_a(0)$ is a bijective isometry.
In particular, if $f\in \mathcal{G}_{\lambda}$, then
$f:D_{1}(0)\to D_{1}(0)$ is bijective and isometric.
\end{proposition}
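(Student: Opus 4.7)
The plan is to prove three things in sequence: that $h$ is an isometry on $D_a(0)$, that $h$ sends $D_a(0)$ into itself, and that $h$ is surjective onto $D_a(0)$. The isometry is the heart of the matter; the inclusion and injectivity fall out of it immediately, while surjectivity needs a separate Picard-style argument.

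For the isometry I would fix distinct $x, y \in D_a(0)$ and factor
\[
h(x) - h(y) = (x - y)\Bigl(a_1 + \sum_{i \geq 2} a_i\bigl(x^{i-1} + x^{i-2}y + \dots + y^{i-1}\bigr)\Bigr).
\]
The condition $h \in \mathcal{U}_a$ translates to $|a_i|\, a^{i-1} \leq 1$ for $i \geq 2$, while $\max(|x|,|y|) < a$ by hypothesis. Hence each summand inside the parenthesis has absolute value strictly less than $1 = |a_1|$, and its general term tends to $0$, so the series converges non-Archimedeanly. The strong triangle inequality then pins the bracket to absolute value exactly $|a_1| = 1$, giving $|h(x) - h(y)| = |x - y|$. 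Taking $y = 0$ also yields $|h(x)| = |x| < a$, so $h(D_a(0)) \subseteq D_a(0)$.

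For surjectivity, given $y \in D_a(0)$ I would iterate $\Phi(x) := a_1^{-1}\bigl(y - \sum_{i \geq 2} a_i x^i\bigr)$ from $x_0 := 0$. An estimate parallel to the one above shows that for any $|y| \leq r < a$ the map $\Phi$ preserves the closed disk $\overline{D}_r(0)$ and contracts distances by the factor $r/a < 1$; hence $\{x_n\}$ is Cauchy and converges to a fixed point of $\Phi$, equivalently a solution of $h(x) = y$ in $D_a(0)$. The ``in particular'' clause is then immediate: for $f \in \mathcal{G}_\lambda$ we have $a_1 = \lambda$ with $|\lambda| = 1$ and $a_i \in \mathcal{O}_p$ for $i \geq 2$, placing $f$ in $\mathcal{U}_1$, and the general statement with $a = 1$ applies. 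The main obstacle, as I see it, is the surjectivity step: injectivity drops out of ultrametricity essentially for free, but ensuring that the iteration stays inside $D_a(0)$ and that the contraction factor remains strictly below $1$ depends crucially on the strict inequality $|x|, |y| < a$, which is exactly what distinguishes the open disk from its closure.
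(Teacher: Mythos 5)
Your proof is correct. Note, however, that the paper does not actually prove this proposition: it records it as well known and remarks that it follows from the generalization of the Weierstrass Preparation Theorem stated immediately afterwards (Proposition \ref{proposition one-to-one}, quoted from Benedetto), which is the tool the author actually relies on later. Your route is genuinely different and entirely self-contained: you get the isometry by factoring $h(x)-h(y)=(x-y)(a_1+\cdots)$ and observing that every term of the bracketed series other than $a_1$ has absolute value strictly below $|a_1|=1$ (using $|a_i|a^{i-1}\leq 1$ and $\max(|x|,|y|)<a$), and you get surjectivity by a contraction-mapping iteration for $\Phi(x)=a_1^{-1}(y-\sum_{i\geq 2}a_ix^i)$ on $\overline{D}_r(0)$ with contraction factor $r/a<1$. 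Both steps check out, including the convergence of the bracketed series and the stability of $\overline{D}_r(0)$ under $\Phi$. The trade-off is that the Weierstrass-preparation approach yields strictly more information — in particular the $d$-to-$1$ counting statement on closed disks that the paper needs in the proofs of Theorems \ref{thmA general estimate} and \ref{thmA quadratic maps} — whereas your argument delivers exactly the bijective-isometry statement with only ultrametric estimates and completeness of $\mathbb{C}_p$. Your closing observation is also apt: the strict inequality $|x|,|y|<a$ is precisely what keeps the contraction factor below $1$, and on the closed disk $\overline{D}_a(0)$ the statement can fail, which is why the paper must invoke the finer Weierstrass machinery there.
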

In fact, this result can be proven using the following generalization of the Weierstrass Preparation
Theorem, see Benedetto \cite[Lemma 2.2]{Benedetto:2003a}, that we will utilize later on, 
proving Theorem \ref{thmA general estimate} and Theorem
\ref{thmA quadratic maps},
respectively.

\begin{proposition}\label{proposition one-to-one}

Let $h\in\mathcal{U}_a$. Then the following two statements hold.

\begin{enumerate}
 \item Suppose that $h$ converges on the rational closed disk
  $\overline{D}_R(0)$. Let $0<r\leq R$ and suppose that
  \begin{equation}\label{ck inequality one-to-one}
   |c_i|r^i\leq
   r\quad \text{ for all } i\geq 2 .
  \end{equation}
 Then, $h:D_r(0)\to D_r(0)$ is a bijection.
 Furthermore, if
 \[
 d = \max\{i\geq 1:|c_i|{r}^i=r\},
 \]
 then $h$ maps the  closed disk $\overline{D}_{r}(0)$ onto itself
 exactly $d$-to-1 (counting
 multiplicity).

 \item Suppose that $h$ converges on the rational open disk
  $D_R(0)$.
  Let $0<r\leq R$ and suppose that
  \[
   |c_i|r^i \leq
   r\quad \text{ for all } i\geq 2 .
  \]
  Then, $h: D_{r}(0)\to D_r(0)$ is bijective.

\end{enumerate}

\end{proposition}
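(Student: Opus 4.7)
The plan is to deduce both parts from the classical non-Archimedean Weierstrass Preparation Theorem (WPT) applied to the translated series $g_y(x):=h(x)-y$: on any rational closed disk inside the domain of convergence, the number of zeros of $g_y$ counted with multiplicity equals the largest index at which $|b_i|r^i$ attains its maximum, where $(b_i)$ are the coefficients of $g_y$. The mapping properties $h(\overline{D}_r(0))\subseteq\overline{D}_r(0)$ and $h(D_r(0))\subseteq D_r(0)$ follow at once from the ultrametric inequality and the hypothesis $|c_i|r^i\leq r$; everything else is preimage counting.

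For the $d$-to-$1$ statement in part~(1), fix $y\in\overline{D}_r(0)$. The coefficients of $g_y$ are $(-y,c_1,c_2,\dots)$, with $|-y|\leq r$, $|c_1|r=r$, and $|c_i|r^i\leq r$ for $i\geq 2$, with equality at the largest such index $i=d$ by hypothesis. Since $d\geq 1$, the largest index at which $|b_i|r^i$ attains the maximum $r$ on $\overline{D}_r(0)$ is exactly $d$, independent of $y$. Applying WPT on the rational closed disk $\overline{D}_r(0)$ (which lies inside $\overline{D}_R(0)$) yields a factorisation $g_y=P_y\cdot U_y$ with $P_y$ a polynomial of degree $d$ having all its roots in $\overline{D}_r(0)$ and $U_y$ a unit there, so $g_y$ has exactly $d$ zeros (with multiplicity) in $\overline{D}_r(0)$.

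For the bijection $D_r(0)\to D_r(0)$, which covers both parts, fix $y\in D_r(0)$ and argue first that any zero of $g_y$ in $D_r(0)$ must lie in $\overline{D}_{|y|}(0)$. Indeed, for $|x|<r$ one has $|c_1x|=|x|$ and $|c_ix^i|\leq r(|x|/r)^i<|x|$ for $i\geq 2$, so if $|y|<|x|<r$ then ultrametric dominance forces $|g_y(x)|=|c_1x|=|x|>0$. For $y=0$ this pins down the sole zero $x=0$. For $y\neq 0$, apply WPT on the rational closed disk $\overline{D}_{|y|}(0)\subseteq D_R(0)$: the coefficients satisfy $|b_0|=|y|$, $|b_1|\cdot|y|=|y|$ and $|b_i|\cdot|y|^i\leq r(|y|/r)^i<|y|$ for $i\geq 2$, so the maximum $|y|$ is attained exactly at $i=0$ and $i=1$, with largest such index $1$. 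Hence $g_y$ has exactly one zero in $\overline{D}_{|y|}(0)$, and by the previous step none in $D_r(0)\setminus\overline{D}_{|y|}(0)$, giving bijectivity.

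The main obstacle is really just invoking the correct form of WPT: zero counts on a rational closed disk must be read off as the largest Newton-polygon index attaining the maximum, together with a unit factor on the same disk. Once that analytic tool is granted, both parts collapse to the ultrametric bookkeeping above. Note that in part~(2) one never needs WPT on $\overline{D}_R(0)$ itself: the bijection is obtained using only disks $\overline{D}_{|y|}(0)$ with $|y|<r\leq R$, which lie strictly inside the open disk of convergence $D_R(0)$.
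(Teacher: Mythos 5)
Your proof is correct. Note that the paper itself offers no proof of this proposition: it is quoted verbatim from Benedetto (Lemma~2.2 of the cited paper), so there is no in-paper argument to compare against. Your argument — reducing preimage counting for $h$ to zero counting for $g_y=h(x)-y$ via the non-Archimedean Weierstrass/Newton-polygon theorem, applied on $\overline{D}_r(0)$ for the $d$-to-$1$ statement and on the strictly smaller rational disks $\overline{D}_{|y|}(0)$ (after first excluding zeros with $|y|<|x|<r$ by the ultrametric dominance $|h(x)|=|x|$) for the open-disk bijection — is the standard one and is essentially the proof in the cited source. Your closing remark that part~(2) never requires the preparation theorem on $\overline{D}_R(0)$ itself is precisely the point that makes the open-disk hypothesis sufficient there.
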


\subsection{The formal solution}

With $f\in\mathcal{G}_{\lambda}$, we associate
a conjugacy function $H_f\in\mathbb{C}_p[[x]]$ 
such that $H_f\circ f(x)=\lambda H_f(x)$,
and normalized so that $H_f(0)=0$ and $H_f'(0)=1$.
The lower bound for the size of linearization disks for
$f\in\mathcal{G}_{\lambda}$ is based on the following lemma.

\begin{lemma}\label{lemma bk estimate indiff}
Let $f\in \mathcal{G}_{\lambda}$. 
Then, there exists a conjugacy function 
$H_f(x)=\sum_{k=1}^{\infty}b_kx^k$
in  $\mathbb{C}_p[[x]]$.
The coefficients of $H_f$ satisfy, $b_1=1$ and
\begin{equation}\label{b_k estimate by prod 1-lambda n}
|b_k|\leq \left( \prod_{n=1}^{k-1}|1-\lambda ^n| \right
)^{-1},
\end{equation}
for all $k\geq 2$.
\end{lemma}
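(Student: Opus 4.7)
The plan is to derive the standard Siegel-type recursion for the coefficients $b_k$ by formally substituting into the conjugacy equation, then bound the recursion term-by-term using ultrametricity together with the fact that $f \in \mathcal{O}_p[[x]]$.

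First, I would write $H_f(x) = \sum_{k\geq 1} b_k x^k$ with $b_1=1$ as an ansatz, substitute it into $H_f\circ f(x) = \lambda H_f(x)$, and compare coefficients of $x^k$ on each side. Since $f(x) = \lambda x + a_2 x^2 + \cdots$ has no constant term, $f(x)^j$ has lowest-order term $\lambda^j x^j$, so for each fixed $k \geq 2$ only $b_1,\dots,b_k$ contribute to the coefficient of $x^k$ on the left. Isolating $b_k$ yields a recursion of the form
\begin{equation*}
b_k \cdot \lambda(\lambda^{k-1}-1) \;=\; -\sum_{j=1}^{k-1} b_j\, [x^k]\bigl(f(x)^j\bigr), \qquad k\geq 2.
\end{equation*}
Since $\lambda$ is not a root of unity, $\lambda^{k-1}-1 \neq 0$, so this defines $b_k$ uniquely and establishes the existence of the formal conjugacy $H_f$.

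Next I would prove the bound by induction on $k$. The base case $k=1$ is $b_1 = 1$ with the empty product equal to $1$. For the inductive step, I use $|\lambda| = 1$ and the ultrametric estimate
\begin{equation*}
|b_k| \;\leq\; \frac{1}{|1-\lambda^{k-1}|} \max_{1\leq j\leq k-1}\Bigl( |b_j|\,\bigl|[x^k](f(x)^j)\bigr|\Bigr).
\end{equation*}
The coefficients of $f(x)^j$ lie in $\mathcal{O}_p$ because $f \in \mathcal{G}_\lambda \subset \mathcal{O}_p[[x]]$, hence $|[x^k](f(x)^j)| \leq 1$. Combined with the induction hypothesis $|b_j| \leq \prod_{n=1}^{j-1} |1-\lambda^n|^{-1}$ and the observation that $|1-\lambda^n| \leq \max(1,|\lambda|^n) = 1$ (so each missing factor $|1-\lambda^n|^{-1} \geq 1$ can be inserted freely), this gives
\begin{equation*}
|b_k| \;\leq\; \frac{1}{|1-\lambda^{k-1}|}\prod_{n=1}^{k-2}|1-\lambda^n|^{-1} \;=\; \prod_{n=1}^{k-1}|1-\lambda^n|^{-1},
\end{equation*}
closing the induction.

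There is no serious obstacle here; the only point requiring mild care is verifying that the recursion isolates $b_k$ cleanly (which relies on the no-constant-term structure of $f$) and that $|\lambda^{k-1}-1| \neq 0$ for all $k\geq 2$ (which is exactly the hypothesis that $\lambda$ is not a root of unity). Everything else follows from the ultrametric inequality combined with $f \in \mathcal{O}_p[[x]]$.
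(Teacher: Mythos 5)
Your proposal is correct and follows essentially the same route as the paper: substitute the formal series into the conjugacy equation to obtain the recursion $b_k = \frac{1}{\lambda(1-\lambda^{k-1})}\sum_{l=1}^{k-1} b_l\,[x^k](f^l)$, observe that the coefficients of $f^l$ lie in $\mathcal{O}_p$ so have absolute value at most $1$, and then close by induction using ultrametricity and $|1-\lambda^n|\leq 1$. The only cosmetic difference is that the paper spells out $[x^k](f^l)$ via the multinomial theorem to show the integrality of the combinatorial coefficients, whereas you invoke directly that $\mathcal{O}_p[[x]]$ is a ring; both are valid.
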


\begin{proof}
The existence of $H_f$ was proven in \cite{Herman/Yoccoz:1981}.
Let $f(x)=\sum_{i=1}^{\infty}a_ix^{i}\in\mathcal{G}_{\lambda}$
and let $H_f(x)=\sum_{k=1}^{\infty}b_kx^k$.
Let $k\geq 2$. Solving the equation $H_f\circ f(x)=\lambda H_f(x)$ 
for the coefficients of $x^k$ we obtain
\begin{equation}\label{bk-equation}
b_k=\frac{1}{\lambda(1-\lambda^{k-1})}\sum_{l=1}^{k-1}b_lC_l(k),
\end{equation}
where $C_l(k)$ is the coefficient of the $x^k$-term in
\[
(a_1 x +a_2x^2+\dots )^l.
\]
Let $\mathbb{N}$
be the set of non-negative integers.
For $k\geq 2$ and $l\in[1,k-1]$, put
\begin{equation*}
A_l(k):=\{(\alpha_1,\dots,\alpha_k)\in\mathbb{N}^k: \sum_{i=1}^{k}\alpha_i=l,
\textrm{ and } \sum_{i=1}^{k}i\alpha_i=k\}.
\end{equation*}
It follows by the Multinomial Theorem that
\begin{equation*}
C_l(k)=\sum_{(\alpha _1,\alpha _2,\dots,\alpha _k)\in A_l(k)}\frac{l!}{\alpha_1!\cdots
\alpha_k!}\lambda^{\alpha_1}a_2^{\alpha_2}\cdots a_k^{\alpha_k}.
\end{equation*}
Note that the factorial factors,
$l!/(\alpha_1!\dots \alpha_k!)$, are integers and
thus of absolute value less than or equal to $1$. Also recall that by definition,
$|a_i|\leq 1$.
It follows by ultrametricity that $|C_l(k)|\leq 1$ and
\[
|b_k|\leq \left( \prod_{n=1}^{k-1}|1-\lambda ^n| \right )^{-1},
\]
for all integers $k\geq 2$ as required.
\end{proof}


\section{Geometry of the unit sphere and the roots of unity}\label{%
section geometry of unit sphere}

Let $\Gamma$ be the group of all roots of unity in $\mathbb{C}_p$.
It has the subgroup $\Gamma_u$ ($u$ for unramified),
where
\begin{equation*}
 \Gamma_u:=\{\xi\in\mathbb{C}_p:\textrm{ } \xi ^m=1\textrm{
for some integer $m\geq 1$ not divisible by $p$}\}.
\end{equation*}

\begin{proposition}
\label{proposition gamma u}
The unit sphere $S_1(0)$ in $\mathbb{C}_p$ decomposes into the
disjoint union
\[
S_1(0)=\bigcup _{\xi \in \Gamma_u}D_1(\xi).
\]
In particular, $\Gamma_u\cap D_1(1)=\{ 1\}$ and consequently
$|1-\xi|=1$ for all $\xi\in\Gamma_u$ such that $\xi \neq 1$.
To each $\lambda \in S_1(0)$
there are unique $\xi\in \Gamma_u$ and $h\in D_1(1)$ such that
$\lambda =\xi h$.
\end{proposition}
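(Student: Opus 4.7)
The plan is to prove this decomposition by first establishing the key arithmetic fact that $|1-\xi|=1$ for every $\xi\in\Gamma_u\setminus\{1\}$, and then using reduction modulo the maximal ideal (Hensel/Teichm\"uller) to show that every element of $S_1(0)$ lands in one of these disks.

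First I would prove $\Gamma_u\cap D_1(1)=\{1\}$. Let $\xi\in\Gamma_u\setminus\{1\}$, so $\xi^m=1$ with $\gcd(m,p)=1$, i.e.\ $|m|=1$. Factor $X^m-1=\prod_{\eta^m=1}(X-\eta)$ in $\mathbb{C}_p[X]$ and differentiate; evaluating at $X=\xi$ gives
\[
m\xi^{m-1}=\prod_{\eta^m=1,\;\eta\neq\xi}(\xi-\eta).
\]
The left-hand side has absolute value $1$, while on the right each factor satisfies $|\xi-\eta|\le 1$ by ultrametricity. Hence every factor must have absolute value exactly $1$; taking $\eta=1$ yields $|\xi-1|=1$, so $\xi\notin D_1(1)$.

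Next I would deduce disjointness. If $\xi_1,\xi_2\in\Gamma_u$ are distinct, then $\xi_1\xi_2^{-1}\in\Gamma_u\setminus\{1\}$, so by the previous step $|\xi_1\xi_2^{-1}-1|=1$ and thus $|\xi_1-\xi_2|=1$. Two open disks of equal radius in an ultrametric space are either equal or disjoint, so $D_1(\xi_1)\cap D_1(\xi_2)=\emptyset$. For the covering, take $\lambda\in S_1(0)$ and pass to the residue field $\overline{\mathbb{F}_p}$; since $\overline{\mathbb{F}_p}^\ast$ is the union of $\mu_n$ over $n$ coprime to $p$, the reduction $\overline{\lambda}$ satisfies $\overline{\lambda}^m=1$ for some $m$ coprime to $p$. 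Applying Hensel's lemma to $X^m-1$, whose derivative $mX^{m-1}$ is a unit at $\overline{\lambda}$ because $|m|=1$, gives a unique $\xi\in\mathcal{O}_p$ with $\xi^m=1$ and $|\lambda-\xi|<1$. Thus $\xi\in\Gamma_u$ and $\lambda\in D_1(\xi)$.

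Finally, for uniqueness of the factorization $\lambda=\xi h$, I would use that $D_1(1)$ is a multiplicative subgroup of $\mathcal{O}_p^\ast$ (if $|h-1|<1$ and $|h'-1|<1$, then $|h/h'-1|=|h-h'|/|h'|<1$). If $\xi h=\xi' h'$ with $\xi,\xi'\in\Gamma_u$ and $h,h'\in D_1(1)$, then $\xi/\xi'=h'/h\in\Gamma_u\cap D_1(1)=\{1\}$, whence $\xi=\xi'$ and $h=h'$. The main obstacle, and really the only nontrivial point, is the sharp estimate $|\xi-1|=1$ for $\xi\in\Gamma_u\setminus\{1\}$; everything else is formal manipulation with ultrametricity and the reduction map, so I would take care to present the derivative argument cleanly.
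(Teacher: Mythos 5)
Your proof is correct and complete; the paper does not prove this proposition but simply cites Schikhof, and your argument is essentially the standard one found in that reference (Teichm\"uller lift via Hensel's lemma for the covering, together with an ultrametric argument for disjointness). The derivative computation $m\xi^{m-1}=\prod_{\eta^m=1,\,\eta\neq\xi}(\xi-\eta)$ to obtain $|\xi-1|=1$ directly is a clean self-contained way of establishing the key estimate, and the existence half of the final factorization $\lambda=\xi h$ is implicit in your covering step (set $h=\lambda/\xi$), so no gaps remain.
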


A proof is given in 
\cite[p. 103]{Schikhof:1984}.
The other important subgroup of $\Gamma$ is $\Gamma_r$ ($r$ for
ramified), where
\begin{equation*}
\Gamma_r:=\{\zeta\in\mathbb{C}_p:\textrm{ }\zeta^{p^s}=1 \textrm{
for some integer $s\geq 0$}\}.
\end{equation*}
By elementary group theory $\Gamma _u\cap\Gamma _r =\{1\}$ and
$\Gamma$ is the direct product
$\Gamma =\Gamma_r\cdot\Gamma_u$.
Most importantly, for each $s\geq 1$ the primitive $p^s$-roots of unity
in $\Gamma_r $ are located in the sphere about $x=1$ of
radius
\begin{equation*}
R(s):=
p^{-\frac{1}{p^{s-1}(p-1)}}.
\end{equation*}
We also put $R(0):=0$.
Note that $R(s)$ is strictly increasing with $s$.
For integers $s\geq 0$, we will denote by $\mathcal{A}_s$
the group of $p^s$-roots of unity in $\Gamma_r$ and by
$\mathcal{B}_s$ the set $\mathcal{A}_s\setminus\mathcal{A}_{s-1}$.
We have the following proposition.

\begin{proposition}[See Theorem 8.9 in \cite{Escassut:1995}]
\label{proposition gamma r}

For integers $s\geq 1$, $\mathcal{B}_s$ consists of $p^s-p^{s-1}$
roots of order $p^s$, which all lie in $S_{R(s)}(1)$.
Moreover, for every $\zeta\in\mathcal{B}_s$,
we have $\mathcal{B}_s\cap D_{R(s)}(\zeta)=\zeta\mathcal{A}_{s-1}$.
\end{proposition}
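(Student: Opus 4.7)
\medskip
\noindent
\textbf{Proof plan.} The proposition is essentially a translation of standard facts about the totally ramified cyclotomic extension $\mathbb{Q}_p(\zeta)/\mathbb{Q}_p$, and my plan is to assemble it in three steps: (i) count $\mathcal{B}_s$, (ii) pin down the common distance of its elements to $1$, (iii) compute the pairwise distances using the group structure.

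\medskip
\noindent
\emph{Step 1 (counting).} By definition, $\mathcal{B}_s = \mathcal{A}_s \setminus \mathcal{A}_{s-1}$ consists exactly of the primitive $p^s$-th roots of unity in $\mathbb{C}_p$. Since the group $\mathcal{A}_s$ is cyclic of order $p^s$, elementary group theory gives $|\mathcal{B}_s| = \varphi(p^s) = p^s - p^{s-1}$.

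\medskip
\noindent
\emph{Step 2 (all elements of $\mathcal{B}_s$ lie on $S_{R(s)}(1)$).} The minimal polynomial over $\mathbb{Q}_p$ of any $\zeta\in\mathcal{B}_s$ is the cyclotomic polynomial $\Phi_{p^s}$, whose degree is $\varphi(p^s)=p^s-p^{s-1}$. The key identity is
\[
\Phi_{p^s}(1) \;=\; \prod_{\zeta\in\mathcal{B}_s}(1-\zeta) \;=\; p.
\]
Because all primitive $p^s$-th roots of unity are Galois-conjugate over $\mathbb{Q}_p$, every factor $|1-\zeta|$ has the same value. Taking absolute values and using multiplicativity forces
\[
|1-\zeta| \;=\; p^{-1/(p^s-p^{s-1})} \;=\; p^{-1/(p^{s-1}(p-1))} \;=\; R(s),
\]
so $\mathcal{B}_s\subset S_{R(s)}(1)$.

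\medskip
\noindent
\emph{Step 3 (the intersection $\mathcal{B}_s\cap D_{R(s)}(\zeta)$).} Fix $\zeta\in\mathcal{B}_s$ and let $\zeta'\in\mathcal{A}_s$. Write $\zeta'=\zeta\omega$ with $\omega=\zeta'/\zeta\in\mathcal{A}_s$. Since $|\zeta|=1$,
\[
|\zeta-\zeta'| \;=\; |1-\omega|.
\]
Now classify $\omega$: either $\omega=1$, or $\omega\in\mathcal{B}_t$ for some $1\le t\le s$. By Step 2 applied at level $t$, $|1-\omega|=R(t)$, and $R(t)<R(s)$ for $t<s$ while $R(t)=R(s)$ for $t=s$. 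Hence $\zeta'\in D_{R(s)}(\zeta)$ if and only if $\omega\in\mathcal{A}_{s-1}$. To finish I need to verify that each such $\zeta'=\zeta\omega$ with $\omega\in\mathcal{A}_{s-1}$ is genuinely in $\mathcal{B}_s$: raising to the $p^{s-1}$-th power gives $(\zeta')^{p^{s-1}} = \zeta^{p^{s-1}}\omega^{p^{s-1}} = \zeta^{p^{s-1}} \ne 1$ because $\zeta$ is primitive of order $p^s$. Therefore $\zeta'\notin\mathcal{A}_{s-1}$ yet $(\zeta')^{p^s}=1$, so $\zeta'\in\mathcal{B}_s$. This gives the asserted equality $\mathcal{B}_s\cap D_{R(s)}(\zeta)=\zeta\mathcal{A}_{s-1}$.

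\medskip
\noindent
\textbf{Expected difficulty.} Steps 1 and 3 are purely group-theoretic once Step 2 is in hand. The only nontrivial ingredient is the computation in Step 2, which rests on $\Phi_{p^s}(1)=p$ together with the Galois-conjugacy of primitive $p^s$-th roots of unity over $\mathbb{Q}_p$; equivalently, on the fact that $1-\zeta$ is a uniformizer of the totally ramified extension $\mathbb{Q}_p(\zeta)/\mathbb{Q}_p$. Since this is classical (and the statement is quoted from Escassut), I would either cite it directly or give the short computation above.
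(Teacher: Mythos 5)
Your proof is correct and is the standard argument; the paper itself does not prove this proposition but cites it (Escassut, Theorem 8.9), adding only the remark that the second statement stems from multiplication by a root of unity being an isometry of $\mathbb{C}_p$ --- which is precisely your Step 3. Your Step 2 supplies the part the paper delegates to the reference, via $\Phi_{p^s}(1)=p$ and the equidistribution of absolute values among the conjugate factors; note that the Galois-conjugacy claim there rests on the irreducibility of $\Phi_{p^s}$ over $\mathbb{Q}_p$ (Eisenstein after $X\mapsto X+1$), or can be bypassed by observing that for $p\nmid a$ the ratio $(1-\zeta^a)/(1-\zeta)=1+\zeta+\dots+\zeta^{a-1}$ reduces to $a\not\equiv 0$ modulo the maximal ideal and is hence a unit. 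Either way the argument is complete and consistent with what the paper intends.
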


The latter statement stems from the fact that multiplication by a
root of unity $\zeta$ induces an isometry of $\mathbb{C}_p$.
Moreover, by ultrametricity, for every $r>0$ and $\zeta\in D_{r}(1) $ we have
$D_{r}(1)=D_r(\zeta)$.
As a consequence,
the arrangement of roots of unity around $\zeta$ looks exactly
the same as the arrangement of roots of unity around $1$.
By the two propositions above and ultrametricity,
we have the following lemma 
(see also figure \ref{figure dynamics of lambda}).

\begin{lemma}\label{lemma closest root of unity}
Let $\lambda \in S_1(0)$ be not a root of unity.
Then,
$m=\min\{n\in \mathbb{Z}:n\geq 1, |1-\lambda ^n|<1 \}$
%
is well defined and not divisible by $p$.
Let $\gamma_0\in\Gamma_r$ be minimizing $|\gamma_0-\lambda^m|$
and let $t\geq 0$ be the integer such that
\[
R(t)\leq |\gamma_0-\lambda^m|<R(t+1).
\]
Then, for every $\gamma\in\gamma_0\mathcal{A}_t$
we have $|\gamma-\lambda^m|=|\gamma_0-\lambda^m|$
and for every integer $j\geq t+1$ and 
$\zeta\in S_{R(j)}(\gamma_0)\cap\Gamma_r$
we have $|\zeta-\lambda^m|=R(j)$. Moreover, 
the cardinality $\#S_{R(j)}(\gamma_0)\cap\Gamma_r=p^j-p^{j-1}$.
In particular, if $\lambda^m\notin S_{R(s)}(1)$ for any $s\geq 1$,
then we may assume $\gamma_0=1$. 
\end{lemma}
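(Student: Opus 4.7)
To see that $m$ is well defined, I would apply Proposition \ref{proposition gamma u} to write $\lambda=\xi h$ with $\xi\in\Gamma_u$ of some order $n$ coprime to $p$ and $h\in D_1(1)$. Since $D_1(1)$ is closed under multiplication (a one-line ultrametric check), $\lambda^n=h^n\in D_1(1)$, so $|1-\lambda^n|<1$ and the minimum $m$ exists. To show $p\nmid m$, suppose to the contrary that $m=pk$ and decompose $\lambda^k=\xi_k h_k$ uniquely with $\xi_k\in\Gamma_u$ and $h_k\in D_1(1)$. Then $\lambda^m=\xi_k^p h_k^p$ with $\xi_k^p\in\Gamma_u$ and $h_k^p\in D_1(1)$; since $\lambda^m\in D_1(1)$ has the trivial decomposition $1\cdot\lambda^m$, uniqueness in Proposition \ref{proposition gamma u} forces $\xi_k^p=1$. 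The order of $\xi_k$ is coprime to $p$, hence $\xi_k=1$, so $\lambda^k\in D_1(1)$ with $k<m$, contradicting minimality.

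Next, I would establish the existence of $\gamma_0$. Proposition \ref{proposition gamma r} implies that $\Gamma_r\cap D_{R(s+1)}(1)=\mathcal{A}_s$ is finite for every $s\geq 0$, so for any $r<1$ the set $\{\gamma\in\Gamma_r:|\gamma-\lambda^m|\leq r\}$ is finite, and the infimum of $|\gamma-\lambda^m|$ over $\Gamma_r$ is attained. With $t\geq 0$ the unique integer such that $R(t)\leq|\gamma_0-\lambda^m|<R(t+1)$, I turn to invariance on the coset $\gamma_0\mathcal{A}_t$. Any $\zeta'\in\mathcal{A}_t$ satisfies $|\zeta'-1|\leq R(t)$ by Proposition \ref{proposition gamma r}, so $|\gamma_0\zeta'-\gamma_0|=|\zeta'-1|\leq R(t)\leq|\gamma_0-\lambda^m|$. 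If this last inequality is strict, the strong ultrametric identity yields $|\gamma_0\zeta'-\lambda^m|=|\gamma_0-\lambda^m|$ outright; if it is an equality, ultrametricity still supplies the upper bound while the minimality of $\gamma_0$ supplies the matching lower bound. For the second claim, every $\zeta\in S_{R(j)}(\gamma_0)\cap\Gamma_r$ with $j\geq t+1$ satisfies $|\zeta-\gamma_0|=R(j)\geq R(t+1)>|\gamma_0-\lambda^m|$, and a single application of the strict ultrametric identity gives $|\zeta-\lambda^m|=R(j)$.

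For the cardinality, multiplication by $\gamma_0\in\Gamma_r$ is an isometry of $\mathbb{C}_p$ and a permutation of $\Gamma_r$, hence $S_{R(j)}(\gamma_0)\cap\Gamma_r=\gamma_0\bigl(S_{R(j)}(1)\cap\Gamma_r\bigr)$; Proposition \ref{proposition gamma r} identifies the latter with $\mathcal{B}_j$, of cardinality $p^j-p^{j-1}$. Finally, when $\lambda^m\notin S_{R(s)}(1)$ for every $s\geq 1$, the distance $|1-\lambda^m|$ differs from each $R(s)$, so for any $\gamma\in\mathcal{B}_s$ the strict ultrametric identity gives $|\gamma-\lambda^m|=\max(R(s),|1-\lambda^m|)\geq|1-\lambda^m|$, showing that $\gamma_0=1$ already realizes the minimum. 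The main obstacle lies in the $p\nmid m$ claim; once one notices that the uniqueness part of Proposition \ref{proposition gamma u} can be applied to the $p$-th power of the decomposition of $\lambda^k$, the remainder is routine ultrametric bookkeeping.
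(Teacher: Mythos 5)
Your proof is correct, and it follows exactly the route the paper intends: the paper gives no written proof of this lemma, stating only that it follows ``by the two propositions above and ultrametricity,'' and your argument supplies precisely those details (the unique $\Gamma_u\cdot D_1(1)$ decomposition for the well-definedness and $p\nmid m$ claims, finiteness of $\mathcal{A}_s$ for the existence of $\gamma_0$, and the standard ultrametric case analysis for the remaining assertions). The handling of the borderline case $|\gamma_0\zeta'-\gamma_0|=|\gamma_0-\lambda^m|$ via the minimality of $\gamma_0$ is the one point that genuinely needs care, and you got it right.
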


\subsection{Arithmetic of the multiplier}\label{section arithmetic of the multiplier}

\begin{lemma}
\label{lemma distance 1-lambda m char 0,p}
Let $\lambda \in \mathbb{C}_p$ be not a root of unity and $|\lambda|=1$.
Let $m$, $\gamma_0$ and $t$ be as
in Lemma \ref{lemma closest root of unity}.
Let $s\geq0$ be the integer for which
$ R(s)\leq |1-\lambda ^m|< R(s+1) $.
Then, for integers $n\geq 1$:

\begin{enumerate}

\item if $m$ does not divide $n$, we have $|1-\lambda^n |=1$,

\item if  $m$ is a divisor of $n$ and $\nu(n)<s$,
we have
$\left |1 -\lambda ^n \right |=|1-\lambda ^m|^{p^{\nu(n)}}$,

\item if $m$ is a divisor of $n$ and  $\nu(n)\geq s$, we have
$\left |1 -\lambda ^n \right |=
p^t|n||\gamma_0-\lambda ^m|^{p^t}$.
In particular, if $\lambda^m\notin S_{R(s)}(1)$,
then $\gamma_0=1$ and $t=s$.
\end{enumerate}

\end{lemma}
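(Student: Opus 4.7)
The plan is to reduce the computation to the arithmetic of roots of unity described in Propositions \ref{proposition gamma u} and \ref{proposition gamma r} and Lemma \ref{lemma closest root of unity}, using the decomposition $\lambda = \xi h$ of Proposition \ref{proposition gamma u} with $\xi \in \Gamma_u$ of order exactly $m$ and $h \in D_1(1)$. For Case (1), if $m \nmid n$ then $\xi^n \neq 1$, so $|1-\xi^n|=1$ by Proposition \ref{proposition gamma u}, whereas the factorization $1-h^n=(1-h)(1+h+\dots+h^{n-1})$ gives $|1-h^n|\leq |1-h|<1$; writing $1-\lambda^n=(1-\xi^n)+\xi^n(1-h^n)$ and invoking the strict ultrametric equality yields $|1-\lambda^n|=1$.

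For Cases (2) and (3), $m\mid n$; set $k:=n/m$ and $\mu:=\lambda^m$, so that $\lambda^n=\mu^k$, $\mu\in D_1(1)$, and $\nu(n)=\nu(k)=:a$ (since $p\nmid m$). The central step is to factor $\mu^k-1=\prod_{\eta^k=1}(\mu-\eta)$ over the $k$-th roots of unity in $\mathbb{C}_p$. Writing $k=p^a b$ with $\gcd(p,b)=1$, each such $\eta$ has a unique expression $\eta=\xi\zeta$ with $\xi\in\Gamma_u$ a $b$-th root of unity and $\zeta\in\mathcal{A}_a$. For $\xi\neq 1$, $\xi\zeta\in D_1(\xi)$ is separated from $\mu\in D_1(1)$ by Proposition \ref{proposition gamma u}, so $|\mu-\xi\zeta|=1$. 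This reduces the problem to evaluating the finite product $|1-\mu^k|=\prod_{\zeta\in\mathcal{A}_a}|\mu-\zeta|$.

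For Case (2), $a<s$: every $\zeta\in\mathcal{A}_a$ satisfies $|1-\zeta|\leq R(a)<R(s)\leq|1-\mu|$ by Proposition \ref{proposition gamma r}, so ultrametricity gives $|\mu-\zeta|=|1-\mu|$; the product over the $p^a$ factors is $|1-\mu|^{p^a}=|1-\lambda^m|^{p^{\nu(n)}}$, as required. For Case (3), $a\geq s$: first assume $\gamma_0=1$ (so $t=s$ by the ``in particular'' part of Lemma \ref{lemma closest root of unity}) and partition $\mathcal{A}_a=\{1\}\cup\bigcup_{j=1}^a\mathcal{B}_j$. For $j<s$, the strict inequality $R(j)<|1-\mu|$ yields $|\mu-\zeta|=|1-\mu|$; for $j=s$, minimality of the distance from $\mu$ to $\Gamma_r$ forces $|\mu-\zeta|\geq|1-\mu|$, while ultrametricity gives $|\mu-\zeta|\leq\max(R(s),|1-\mu|)=|1-\mu|$, hence equality; for $j>s$ one has $R(j)>|1-\mu|$, so $|\mu-\zeta|=R(j)$. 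Using $R(j)^{p^j-p^{j-1}}=R(j)^{p^{j-1}(p-1)}=p^{-1}$, the first block telescopes to $|1-\mu|^{p^s}$ and the second contributes $p^{s-a}$, giving $|1-\mu|^{p^s}\cdot p^{s-a}=p^s|n||\gamma_0-\mu|^{p^s}$.

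The remaining sub-case $\gamma_0\neq 1$ reduces to the previous one. Here $|\gamma_0-\mu|<|1-\mu|$ combined with ultrametricity forces $|1-\gamma_0|=|1-\mu|=R(s)$, so $\gamma_0\in\mathcal{B}_s$ and $t<s$. Because $a\geq s$ we have $\gamma_0\in\mathcal{A}_a$, hence $\gamma_0\mathcal{A}_a=\mathcal{A}_a$; the change of variables $\zeta\mapsto\gamma_0\zeta$ gives $\prod_{\zeta\in\mathcal{A}_a}|\mu-\zeta|=\prod_{\zeta\in\mathcal{A}_a}|\mu'-\zeta|$ with $\mu':=\gamma_0^{-1}\mu$, for which the closest $\Gamma_r$-element is $1$ and $|1-\mu'|=|\gamma_0-\mu|\in[R(t),R(t+1))$. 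The previous sub-case applied with $s$ replaced by $t$ then yields $|\gamma_0-\mu|^{p^t}\cdot p^{t-a}=p^t|n||\gamma_0-\mu|^{p^t}$, as required. The main obstacle is this last sub-case: one must verify that $\gamma_0\in\mathcal{B}_s$ and recognize that the group-action reduction $\mathcal{A}_a=\gamma_0\mathcal{A}_a$ is available exactly when $a\geq s$, i.e.\ precisely in Case (3).
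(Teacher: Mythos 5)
Your proof is correct and takes essentially the same route as the paper: factor $\mu^k-1$ over roots of unity, use $\Gamma=\Gamma_r\cdot\Gamma_u$ to discard the unramified factors and reduce to the product over $\mathcal{A}_{\nu(n)}$, and then split into cases by comparing $|1-\mu|$ with the scales $R(j)$. Your explicit change of variables $\zeta\mapsto\gamma_0\zeta$ (and the verification that $\gamma_0\in\mathcal{B}_s\subset\mathcal{A}_a$ when $\nu(n)\geq s$) in the $\gamma_0\neq 1$ sub-case makes precise what the paper only asserts informally by saying ``the arguments carry over.''
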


\begin{remark}
In the third statement, $|n|p^t\leq 1$ since we assume that $t\leq s\leq \nu(n)$ in this case.
\end{remark}


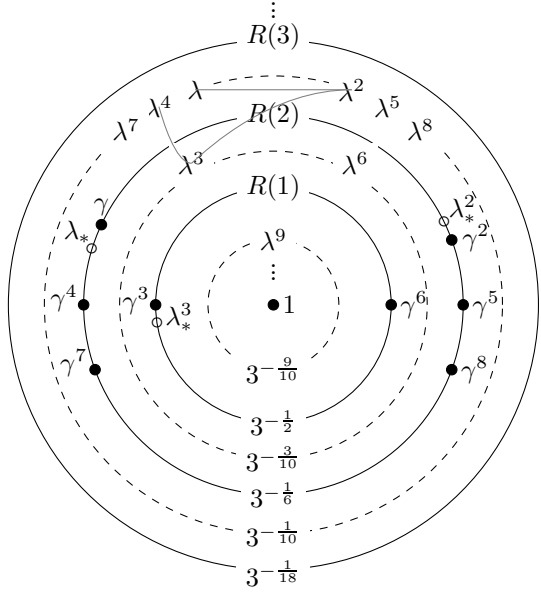
\begin{figure}
\begin{center}
\begin{tikzpicture}[scale=0.87]

\node[right,fill=white] at (0,0) {$1$};
\draw[fill] (0:0) circle (0.08);

\draw[dashed] (0,0) circle (1cm);

\node[fill=white] at (0,-1) {$3^{-\frac{9}{10}}$};

\draw (0,0) circle (1.8cm);
\node[fill=white] at (0,1.8) {$R(1)$};
\node[fill=white] at (0,-1.8) {$3^{-\frac{1}{2}}$};

\draw[fill] (0:1.8) circle (0.08);
\draw[fill] (180:1.8) circle (0.08);

\draw[dashed] (0,0) circle (2.35cm);
\node[fill=white] at (0,-2.35) {$3^{-\frac{3}{10}}$};

\draw (0,0) circle (2.9cm);
\node[fill=white] at (0,2.9) {$R(2)$};
\node[fill=white] at (0,-2.9) {$3^{-\frac{1}{6}}$};

\draw[fill] (20:2.9) circle (0.08);
\node[right] at (20.5:2.93) {$\gamma^2$};
\node at (26:2.9) {$\circ$};
\node[right] at (30:2.93) {$\lambda_*^2$};

\draw[fill] (155:2.9) circle (0.08);
\node[above] at (155:2.9) {$\gamma$};
\node at (163:2.9) {$\circ$};
\node[above] at (165:3.1) {$\lambda_*$};

\draw[fill] (180:2.9) circle (0.08);
\node[left] at (178:2.85) {$\gamma^4$};
\node at (189:1.8) {$\circ$};
\node[right] at (186:1.8) {$\lambda_*^3$};

\node[left] at (178:1.75) {$\gamma^3$};

\draw[fill] (200:2.9) circle (0.08);
\node[below] at (190:3.1) {$\gamma^7$};

\draw[fill] (340:2.9) circle (0.08);
\node[right] at (340:2.9) {$\gamma^8$};

\draw[fill] (360:2.9) circle (0.08);
\node[right] at (360:2.9) {$\gamma^5$};
\node[right] at (360:1.8) {$\gamma^6$};

\draw[dashed] (0,0) circle (3.5cm);
\node[fill=white] at (0,-3.5) {$3^{-\frac{1}{10}}$};
\node[fill=white] at (110:3.5) {$\lambda$};
\node[fill=white] at (70:3.5) {$\lambda^2$};
\draw[gray] (110:3.5) parabola (70:3.5);
\node[fill=white] at (120:2.5) {$\lambda^3$};
\draw[gray] (70:3.5) parabola (120:2.5);
\node[fill=white] at (120:3.5) {$\lambda^4$};
\draw[gray] (120:2.5) parabola (120:3.5);
\node[fill=white] at (60:3.5) {$\lambda^5$};

\node[fill=white] at (60:2.5) {$\lambda^6$};

\node[fill=white] at (130:3.5) {$\lambda^7$};

\node[fill=white] at (50:3.5) {$\lambda^8$};

\node[fill=white] at (90:1) {$\lambda^9$};

\node at (90:0.4) {.};
\node at (90:0.5) {.};
\node at (90:0.6) {.};

\draw (0,0) circle (4.05cm);
\node[fill=white]  at (0,4.1) {$R(3)$};
\node[fill=white]  at (0,-4.1) {$3^{-\frac{1}{18}}$};

\node at (0,4.4) {.};
\node at (0,4.5) {.};
\node at (0,4.6) {.};

\end{tikzpicture}
\end{center}

\caption{(Illustration of Lemmas \ref{lemma closest root of unity} and 
\ref{lemma distance 1-lambda m char 0,p}).
Let $\gamma$ be a primitive $9$-root
of unity.
The filled dots illustrate the roots of unity of order $1$, 
$3$,  and $3^2$
respectively.
Let $\lambda_*=\gamma+3\in\mathbb{C}_3$.
In this case
$|1-\lambda_*|=|1-\gamma|=R(2)$ so that
 $s(\lambda_*)=2$, 
 $m(\lambda_*)=1$,
$\gamma_0(\lambda_*)=\gamma$, 
with $|\gamma_0-\lambda|=1/3$ so that 
$t(\lambda_*)=0$. 
The orbit of $\lambda_*$ will be infinite
but on the larger scale it will resemble that of $\gamma$.
Also note that $|\gamma-\gamma^4|=|\gamma||1-\gamma^3|=R(1)$, 
whereas for example
$|\gamma-\gamma^2|=R(2)$. By ultrametricity we have 
$S_{R(1)}(\gamma_0)\cap\Gamma_r=
\{\gamma^4,\gamma^7\}$,
$S_{R(2)}(\gamma_0)\cap\Gamma_r=
\{1,\gamma^3,\gamma^6,\gamma^2,\gamma^5,\gamma^8\}$, 
and 
$S_{R(3)}(\gamma_0)\cap\Gamma_r=
S_{R(3)}(1)\cap\Gamma_r=\mathcal{B}_3$.
The figure also illustrates the first nine points in the
orbit of $\lambda=1+3^{\frac{1}{10}}\in\mathbb{C}_3$.
For higher iterates, $\lambda^{10}$ will be close to $\lambda$,
and $\lambda^{11}$ close to $\lambda^{2}$ and so forth.
In this case $R(2)<|1-\lambda|<R(3)$ and hence $s(\lambda)=2$,
$m(\lambda)=1$, $\gamma_0(\lambda)=1$ and $t(\lambda)=2$.
}\label{figure dynamics of lambda}
\end{figure}


\begin{proof}[Proof of Lemma \ref{lemma distance 1-lambda m char 0,p}]
Part 1 is a direct consequence of Lemma \ref{lemma closest root of unity}.
To prove 2 and 3
it is enough to consider the case $m=1$. Hence, we shall assume that
$\lambda\in D_1(1)$.
The proof is based on the factorization of the polynomial
$\lambda ^n-1$ from which we obtain
\begin{equation*}
\left |\lambda ^n - 1 \right |=
\prod _{\theta ^n=1}\left |\lambda - \theta \right |.
\end{equation*}
As noted in the previous section,
the set of roots of unity in $\mathbb{C}_p$ is given by the product
$\Gamma=\Gamma_r\cdot\Gamma_u$. This representation enables us to
write $\left |\lambda ^n - 1 \right |$ in the form
\begin{equation}\label{splitedfactorequation}
\left |\lambda ^n - 1 \right |=
\prod_{\zeta ^n=1}\left |\lambda - \zeta \right |
\prod _{(\zeta\xi )^n=1}\left |\lambda - \zeta\xi \right |,
\end{equation}
where $\zeta\in \Gamma_r$ and $\xi\in\Gamma_u\setminus\{1\}$.
Recall that we assume that $\lambda\in D_1(1)$. In view of Proposition 
\ref{proposition gamma u}, for $\xi\neq 1$ we have $\zeta\xi\notin D_1(1)$ 
and consequently
$|\lambda - \zeta\xi|=1$.
Moreover, for $n=ap^{\nu(n)}$, we
have that $\zeta^n=1$ if and only if $\zeta^{p^{\nu(n)}}=1$. It
follows that (\ref{splitedfactorequation}) can be reduced to
\begin{equation}\label{simplifiedfactorequation}
\left |\lambda ^n - 1 \right |=\prod
_{\zeta\in\mathcal{A}_{\nu(n)}}\left |\lambda - \zeta \right |.
\end{equation}
We distinguish between two cases given below.

\textbf{Case I:} $s\geq 1$ and $\nu(n)\in[0,s-1]$.
In this case, for every $\zeta\in \mathcal{A}_{\nu(n)}$ we have
\[
|1 - \zeta|<R(s)\leq |\lambda -1|.
\]
Consequently, by ultrametricity
$|\lambda - \zeta| =|(\lambda-1)+(1-\zeta)|=|\lambda-1|$.
As the cardinality $\#\mathcal{A}_{\nu(n)}=p^{\nu(n)}$, we obtain
the desired result using (\ref{simplifiedfactorequation}).

\textbf{Case II:} $s\geq 0$ and $\nu(n)\geq s$.
We first consider the case $\gamma_0=1$ so that $t=s$.
In this case $\mathcal{A}_s\subseteq \mathcal{A}_{\nu(n)}$ and hence
\begin{equation}\label{simplifiedfactorequation_split}
\prod
_{\zeta\in\mathcal{A}_{\nu(n)}}\left |\lambda - \zeta \right |=
\prod
_{\zeta\in\mathcal{A}_{s}}\left |\lambda - \zeta \right |
\prod_{s'=s+1}^{\nu(n)}
\left (
\prod_{\zeta\in\mathcal{B}_{s'}}\left |\lambda - \zeta \right |
\right ),
\end{equation}
where the product over $s'$ is set to be identically one if $\nu(n)=s$.
Recall that by assumption $\gamma_0=1$ and $t=s$, so that
$|\lambda -\zeta|=|\lambda-1|$ for all $\zeta\in \mathcal{A}_{s}$ and
\begin{equation}\label{equation A_s}
\prod
_{\zeta\in\mathcal{A}_{s}}\left |\lambda - \zeta \right |=|\lambda -1|^{p^s}.
\end{equation}
Moreover, for every integer $j>s $ and
$\zeta\in\mathcal{B}_{j}$ we
have $|\lambda-1|<R(s+1)\leq |1-\zeta|$ and hence
\[
|\lambda-\zeta|=|(\lambda-1)-(1-\zeta)|=|1-\zeta|=R(j).
\]
Therefore, in view of Proposition \ref{proposition gamma r},
\begin{equation*}
\prod_{s'=s+1}^{\nu(n)}
\left (
\prod_{\zeta\in\mathcal{B}_{s'}}\left |\lambda - \zeta \right |
\right )
=
\prod_{s'=s+1}^{\nu(n)}R(s')^{p^{s'}-p^{{s'}-1}}=p^{-(\nu(n)-s)}.
\end{equation*}
Combined with (\ref{simplifiedfactorequation}),
(\ref{simplifiedfactorequation_split}), and (\ref{equation A_s})
this implies the desired result.
This completes the proof in the case $\gamma_0=1$.

As to the remaining case $s\geq 1$ and $\gamma_0\in \mathcal{B}_s$.
Recall that as noted after Proposition \ref{proposition gamma r},
the arrangement of roots of unity about $\gamma_0$ looks
exactly like the arrangement of
roots of unity near $1$.
It follows by Lemma \ref{lemma closest root of unity}
that the arguments used above
in case II, carry over to the present case
replacing $1$ by
$\gamma_0$, $s$ by $t$, $\mathcal{A}_s$ by $\gamma_0\mathcal{A}_t$,
and $\mathcal{B}_j$ by $S_{R(j)}(\gamma_0)\cap\Gamma_r$.
This completes the proof of the lemma.
\end{proof}


\newpage

\section{General estimate of linearization disks}\label{estimates of linearization disks}

Throughout the rest of this paper let 
$m$, $s$, $\gamma_0$ and $t$ 
be as in Lemma \ref{lemma distance 1-lambda m char 0,p}
and put
\begin{equation}\label{definition tilde r}
\tilde{r}(\lambda):=R(s+1)^{\frac{1}{m}}p^{-\frac{s-t}{mp^s}}
|1-\lambda^m|^{s\frac{p-1}{mp} }
|\gamma_0-\lambda ^m|^{1/mp^{s-t}}.
\end{equation}
This section is devoted to prove Theorem
\ref{theorem general estimate} stated below,
from which we obtain Theorem \ref{thmA general estimate}.

\begin{theorem}\label{theorem general estimate}
Let $f\in \mathcal{G}_{\lambda}$.
Then, the linearization disk $\Delta_f\supseteq
D_{\tilde{r}(\lambda)}(0)$.
Moreover, if the conjugacy $H_f$ converges on the closed disk
$\overline{D}_{\tilde{r}(\lambda)}(0)$, then $\Delta_f\supseteq
\overline{D}_{\tilde{r}(\lambda)}(0)$.
\end{theorem}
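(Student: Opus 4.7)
The plan is to combine Lemma \ref{lemma bk estimate indiff} with Proposition \ref{proposition one-to-one} applied to $H_f$. Setting $r = \tilde{r}(\lambda)$, the key claim to verify is
\[
|b_k|\, r^k \leq r \qquad\text{for every } k \geq 2,
\]
or equivalently $r^{k-1} \leq \prod_{n=1}^{k-1}|1-\lambda^n|$ by Lemma \ref{lemma bk estimate indiff}. Granting this, convergence of $H_f$ on $D_r(0)$ follows from (\ref{radius of convergence}); Proposition \ref{proposition one-to-one}(2) then identifies $H_f$ as a bijection of the rational open disk $D_r(0)$ onto itself, and since $H_f(0)=0$ and $H_f'(0)=1$, its analytic inverse yields the conjugacy $f = H_f^{-1}\circ T_\lambda \circ H_f$ on $D_r(0)$, so that $\Delta_f \supseteq D_r(0)$. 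When $H_f$ converges on $\overline{D}_r(0)$, Proposition \ref{proposition one-to-one}(1) upgrades this to $\Delta_f \supseteq \overline{D}_r(0)$.

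The core work is therefore the product inequality. By Lemma \ref{lemma distance 1-lambda m char 0,p}, $|1-\lambda^n| = 1$ whenever $m \nmid n$, so writing $n = m\ell$ and $M = \lfloor (k-1)/m\rfloor$ one has $\prod_{n=1}^{k-1}|1-\lambda^n| = \prod_{\ell=1}^M |1-\lambda^{m\ell}|$. Stratifying by $j := \nu(\ell)$, that same lemma identifies each factor as $|1-\lambda^m|^{p^j}$ for $j < s$, and as $p^{\,t-j}\,|\gamma_0 - \lambda^m|^{p^t}$ for $j \geq s$; the number of $\ell \in [1,M]$ with $\nu(\ell) = j$ equals $\lfloor M/p^j\rfloor - \lfloor M/p^{j+1}\rfloor$. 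Summing these contributions produces a closed-form expression for $\prod_{n=1}^{k-1}|1-\lambda^n|$, which I will compare against $\tilde{r}(\lambda)^{k-1}$ using the explicit formula (\ref{definition tilde r}).

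The main obstacle is the term-by-term verification of this comparison. The four rational exponents appearing in (\ref{definition tilde r}) are engineered so that the asymptotic density of each regime in the above stratification contributes exactly the right growth rate, while the leading factor $R(s+1)^{1/m}$ absorbs the transition between consecutive valuation levels. The inequality is nearly saturated precisely at those $k$ for which $M$ is of the form $p^{s'}$ or $p^{s'}-1$ with $s' \geq s$, where the partial product has just completed one valuation level but has yet to begin the next. Handling the floor functions uniformly at these transitions, and absorbing discrepancies of the form $\lfloor M/p^j\rfloor - M/p^j$ via ultrametricity, is the delicate step. Once $|b_k| r^k \leq r$ is established for all $k \geq 2$, both conclusions of Theorem \ref{theorem general estimate} follow from the two parts of Proposition \ref{proposition one-to-one}.
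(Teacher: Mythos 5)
Your framework is the same as the paper's: bound $|b_k|$ by $\bigl(\prod_{n=1}^{k-1}|1-\lambda^n|\bigr)^{-1}$ via Lemma \ref{lemma bk estimate indiff}, establish $|b_k|\tilde{r}(\lambda)^k\leq\tilde{r}(\lambda)$, and invoke Proposition \ref{proposition one-to-one}. Your stratification of the product by $j=\nu(\ell)$, with multiplicity $\lfloor M/p^j\rfloor-\lfloor M/p^{j+1}\rfloor$ and the factors $|1-\lambda^m|^{p^j}$ for $j<s$ and $p^{t-j}|\gamma_0-\lambda^m|^{p^t}$ for $j\geq s$, is exactly the decomposition the paper carries out (the sets $T_{k-1}$ and $S_{k-1}$, the sum $\Sigma$ of Lemma \ref{lemma Sigma}, and the factorial $|M!|$ controlled by Lemma \ref{lemma order of p in factorial}). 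So the route is not different; the issue is that you stop at the point where the real work begins.

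The genuine gap is that the central inequality $\prod_{n=1}^{k-1}|1-\lambda^n|\geq\tilde{r}(\lambda)^{k-1}$ is never actually verified: you describe the comparison as ``the delicate step'' and defer it. That comparison is the entire quantitative content of the theorem (it is the paper's Lemma \ref{lemma theorem general estimate}), and it does not close by ``ultrametricity'' --- the discrepancies $\lfloor M/p^j\rfloor - M/p^j$ are handled by ordinary real arithmetic: Legendre's formula $\nu(M!)=(M-S_M)/(p-1)\leq (M-1)/(p-1)$, the exact identity for $\Sigma$ with its $\delta(mp^{s'})$ correction terms, and a sign check showing each correction term, together with the slack $p^{-1/(p-1)}$ built into $R(s+1)^{1/m}$, pushes the estimate in the right direction. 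Until that bookkeeping is done, you have not shown that the four exponents in (\ref{definition tilde r}) are the correct ones rather than merely plausible ones. A second, smaller gap: for the ``Moreover'' part you need the \emph{strict} inequality $|b_k|\tilde{r}(\lambda)^k<\tilde{r}(\lambda)$ for all $k\geq2$ (the paper's Remark \ref{remark strict}); your stated claim with $\leq$ only gives, via Proposition \ref{proposition one-to-one}(1), that $H_f$ is $d$-to-$1$ on $\overline{D}_{\tilde{r}(\lambda)}(0)$ for some $d\geq1$, which is not enough to conclude bijectivity on the closed disk.
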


\begin{remark}
Let $|\gamma_0 -\lambda ^m|$ be fixed. Then, the estimate
$\tilde{r}(\lambda)\to 1$
as $m$ or  $s$ goes to infinity. The latter case follows from the fact that
 both $R(s+1)$ and $|1-\lambda^m|^s$ approach one, as $s$ goes to infinity.
On the other hand,
if $s$ and $m$ are fixed, then $\tilde{r}(\lambda) \to 0$ as
$|\gamma_0 -\lambda^m|\to 0$.
\end{remark}

For any non-negative real number $\alpha$, let $\lfloor \alpha \rfloor$
denote the integer part of $\alpha$.  
The main ingredient in the proof of 
Theorem \ref{theorem general estimate}
is Lemma \ref{lemma theorem general estimate}
below, which also plays a fundamental role
in the analysis of the quadratic case in the following section.

\begin{lemma}\label{lemma theorem general estimate}
Let $\lambda\in S_1(0)$ be not a root of unity and let $k\geq 2$ be an integer.
Then, 
\begin{equation}\label{product ineq general case}
\left (\prod_{n=1}^{k-1}|1-\lambda ^n| \right )^{-1}
\leq
R(s+1)^{\frac{1}{m}}
|\gamma_0-\lambda^m|^{p^t
(\frac{k-1}{mp^s}-\left\lfloor\frac{k-1}{mp^s}\right\rfloor)
}
\tilde{r}(\lambda)^{-(k-1)}.
\end{equation}
Second, if $(k-1)/mp^{s}$ is a non-negative integer power of $p$, then
\begin{equation}\label{prod eq p power case}
\left (\prod_{n=1}^{k-1}|1-\lambda ^n| \right )^{-1}
=
p^{-\frac{1}{p-1}}
\tilde{r}(\lambda)^{-(k-1)}.
\end{equation}
Third,
if $m=1$ and $p^s\nmid k-1$ (so that $s\geq 1$), then
\begin{equation}\label{prod ineq general case m=1 pnmid k-1}
\left( \prod_{n=1}^{k-1}|1-\lambda ^n| \right )^{-1}
\leq
R(s+1)
|1-\lambda|^{\frac{k-1}{p}-\left\lfloor\frac{k-1}{p}\right\rfloor}
\tilde{r}(\lambda)^{-(k-1)}.
\end{equation}
\end{lemma}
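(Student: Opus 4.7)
The strategy is to evaluate $\prod_{n=1}^{k-1}|1-\lambda^n|$ explicitly by means of Lemma~\ref{lemma distance 1-lambda m char 0,p}, then match the result against $\tilde{r}(\lambda)^{k-1}$. First, part (1) of that lemma eliminates all factors with $m\nmid n$. Writing the surviving indices as $n=mj$ with $1\le j\le M:=\lfloor(k-1)/m\rfloor$ and noting that $\nu(mj)=\nu(j)$ since $p\nmid m$, parts (2)--(3) give
\[
|1-\lambda^{mj}| \;=\; \begin{cases} |1-\lambda^m|^{p^{\nu(j)}}, & \nu(j) < s, \\[2pt] p^{\,t-\nu(j)}\,|\gamma_0-\lambda^m|^{p^t}, & \nu(j) \ge s. \end{cases}
\]
Grouping by $i:=\nu(j)$ and using the elementary count $\#\{j\in[1,M]:\nu(j)=i\}=\lfloor M/p^i\rfloor-\lfloor M/p^{i+1}\rfloor$ reduces the product to an explicit expression of the form $|1-\lambda^m|^{S_1}\,|\gamma_0-\lambda^m|^{p^tS_3}\,p^{\,S_4}$, where $S_1=\sum_{i=0}^{s-1}p^i\bigl(\lfloor M/p^i\rfloor-\lfloor M/p^{i+1}\rfloor\bigr)$, $S_3=\lfloor M/p^s\rfloor$, and $S_4=-\sum_{j:\nu(j)\ge s}(\nu(j)-t)$.

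Next I would match these three exponents against those of $\tilde{r}(\lambda)^{k-1}$. Telescoping $S_1$ yields $S_1=Ms(p-1)/p$ plus a floor-induced remainder; this matches the leading exponent $(k-1)s(p-1)/(mp)$ in $\tilde{r}(\lambda)^{k-1}$ up to a defect that will be absorbed by $R(s+1)^{1/m}$. Since $p^t\{(k-1)/(mp^s)\}-(k-1)/(mp^{s-t})=-p^t\lfloor M/p^s\rfloor$, the exponent on $|\gamma_0-\lambda^m|$ matches \emph{exactly}, producing the residue factor $|\gamma_0-\lambda^m|^{p^t\{(k-1)/(mp^s)\}}$ visible on the right of \eqref{product ineq general case}. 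For the remaining $p$-factors, I would substitute $j=p^sj'$ and apply Legendre's formula $\sum_{j'=1}^{S_3}\nu(j')=(S_3-s_p(S_3))/(p-1)$; together with the $p^{-(s-t)/(mp^s)}$ baked into $\tilde{r}(\lambda)$, this yields the remaining $R(s+1)^{1/m}=p^{-1/(mp^s(p-1))}$ correction and closes inequality \eqref{product ineq general case}.

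Finally, the two refinements come from tightening this bookkeeping in the extremal cases. For \eqref{prod eq p power case}, the hypothesis $(k-1)/(mp^s)=p^a$ forces $\alpha:=(k-1)/m-M$ to vanish, eliminates every fractional-part defect, and makes $s_p(p^a)=1$, so the Legendre residue collapses to exactly $1/(p-1)$, upgrading the inequality to the equality with constant $p^{-1/(p-1)}$. For \eqref{prod ineq general case m=1 pnmid k-1}, setting $m=1$ removes the $\alpha$-correction, while $p^s\nmid k-1$ (so $s\ge 1$) ensures that the truncation occurs one level below, so the $|\gamma_0-\lambda^m|^{p^t\{\cdot\}}$ residue can be replaced by the cleaner $|1-\lambda|^{(k-1)/p-\lfloor(k-1)/p\rfloor}$. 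The main obstacle will be the precise bookkeeping in the middle step: there are three distinct error sources (the $\alpha$-residue from $(k-1)/m\notin\mathbb{Z}$, the $p^s$-floor residue from $M/p^s$, and the Legendre digit-sum $s_p(\lfloor M/p^s\rfloor)$), and one must verify that they assemble into exactly the two correction factors $R(s+1)^{1/m}$ and $|\gamma_0-\lambda^m|^{p^t\{(k-1)/(mp^s)\}}$, with no residual error left floating free.
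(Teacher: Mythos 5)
Your plan is correct and, for the main inequality \eqref{product ineq general case} and the equality \eqref{prod eq p power case}, it follows essentially the same route as the paper: the same appeal to Lemma~\ref{lemma distance 1-lambda m char 0,p}, the same decomposition of the surviving indices by $p$-adic valuation (your $S_1$ is exactly the paper's $\Sigma$ from Lemma~\ref{lemma Sigma}, your $S_3$ its $M$, and your $p^{S_4}$ its $|M!|\,p^{-(s-t)M}$), the same use of Legendre's formula for $\nu(S_3!)$, and the same exponent-matching against $\tilde{r}(\lambda)^{k-1}$, with the factor $R(s+1)^{1/m}$ arising from combining the Legendre slack $p^{1/(p-1)}$ with the defect $|1-\lambda^m|^{\delta(mp^s)p^{s-1}}\geq p^{-\delta(mp^s)/(p-1)}$; the $|\gamma_0-\lambda^m|$ exponents match exactly, as you note. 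The one place you genuinely diverge is the third inequality \eqref{prod ineq general case m=1 pnmid k-1}: you propose to deduce it from the first by replacing the residue $|\gamma_0-\lambda|^{p^t\delta(p^s)}$ with $|1-\lambda|^{\delta(p)}$, whereas the paper instead reruns the computation with an improved bound on $\Sigma$, exploiting $\delta(p^j)\geq 1/p^j$ for $j>\nu(k-1)$. Your shortcut does work, but it is not automatic and the proposal leaves it unverified: writing $r=(k-1)\bmod p^s\geq 1$, one must check $|\gamma_0-\lambda|^{p^t r/p^s}\leq |1-\lambda|^{(r\bmod p)/p}$, which follows from $r\geq r\bmod p$ together with either $\gamma_0=1$ (so $t=s$ and the left side is $|1-\lambda|^{r}$ with $r\geq 1$) or $\gamma_0\neq 1$ (so $|1-\lambda|=R(s)$ and $|\gamma_0-\lambda|<R(t+1)$, giving $|\gamma_0-\lambda|^{p^t r/p^s}<p^{-r/(p^s(p-1))}\leq p^{-(r\bmod p)/(p^s(p-1))}=R(s)^{(r\bmod p)/p}$). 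With that verification supplied, your route to the third part is arguably cleaner than the paper's.
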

\begin{remark}
Concerning the proof of Theorem \ref{thmA quadratic maps}
that will be given in the next section, 
the equality (\ref{prod eq p power case})
is essential in proving that
for a quadratic map $P_{\lambda}$, the conjugacy $H_{P_{\lambda}}$
diverges on the sphere of radius
$\tau=|1-\lambda|^{-1/p}\tilde{r}(\lambda)$. The
extra factor containing $|1-\lambda|$ 
in the estimate
(\ref{prod ineq general case m=1 pnmid k-1})
is essential in the proof that
in the quadratic case $H_{P_{\lambda}}$
is one-to-one on $D_{\tau}(0)$.

\end{remark}

\begin{remark}\label{remark strict}
Let $k\geq 2$. Then, in view of (\ref{product ineq general case}), 
we have 
\[
\left (\prod_{n=1}^{k-1}|1-\lambda^n| \right)^{-1} <\tilde{r}(\lambda)^{-(k-1)}.
\]
\end{remark}

The proof of Lemma \ref{lemma theorem general estimate} 
is below, after the following two lemmas.
\begin{lemma}[See Lemma 25.5 in 
\cite{Schikhof:1984}]\label{lemma order of p in factorial}
Let $n\geq 1$ be an integer and let $S_n$ be
the sum of the coefficients in the $p$-adic expansion of $n$.
Then,
\begin{equation}\label{equation order of p in factorial}
\nu(n!)=\frac{n-S_n}{p-1}\leq \frac{n-1}{p-1},
\end{equation}
with equality if $n$ is a power of $p$. Consequently, for all
integers $a\geq 1$,
\begin{equation}\label{limit order of p in factorial}
\lim_{n\to\infty}\frac{\nu(\left\lfloor \frac{n}{a}\right \rfloor!)} {n} 
=
\frac{1}{a(p-1)}.
\end{equation}
\end{lemma}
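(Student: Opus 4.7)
The plan is to derive the equality $\nu(n!)=(n-S_n)/(p-1)$ from Legendre's formula, extract the inequality as an immediate corollary with a sharp equality case, and then obtain the limit statement by controlling the digit sum $S_m$ sublinearly.

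First, I would recall the classical formula $\nu(n!)=\sum_{i\geq 1}\lfloor n/p^i\rfloor$, which follows by counting, for each $i\geq 1$, the number of multiples of $p^i$ in $\{1,2,\dots,n\}$ and summing. Writing $n=\sum_{i=0}^{k}a_ip^i$ with $0\leq a_i\leq p-1$ and $S_n=\sum_i a_i$, I would observe that $\lfloor n/p^j\rfloor=\sum_{i\geq j}a_ip^{i-j}$, so that
\begin{equation*}
\nu(n!)=\sum_{j=1}^{k}\sum_{i=j}^{k}a_ip^{i-j}=\sum_{i=1}^{k}a_i\,\frac{p^i-1}{p-1}=\frac{1}{p-1}\Bigl(\sum_{i=0}^{k}a_ip^i-\sum_{i=0}^{k}a_i\Bigr)=\frac{n-S_n}{p-1}.
\end{equation*}
This proves the equality. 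Since $n\geq 1$ forces $S_n\geq 1$, we obtain $\nu(n!)\leq (n-1)/(p-1)$, with equality exactly when $S_n=1$, i.e.\ when $n$ is a power of $p$, as claimed.

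For the limiting statement, the key observation is that the digit sum grows only logarithmically: for any positive integer $m$, writing $m$ in base $p$ with at most $1+\lfloor\log_p m\rfloor$ digits yields the crude bound $S_m\leq (p-1)(1+\log_p m)$, hence $S_m=O(\log m)$. Setting $m=\lfloor n/a\rfloor$, the first part of the lemma gives
\begin{equation*}
\frac{\nu(\lfloor n/a\rfloor!)}{n}=\frac{\lfloor n/a\rfloor-S_{\lfloor n/a\rfloor}}{n(p-1)}=\frac{1}{p-1}\cdot\frac{\lfloor n/a\rfloor}{n}-\frac{1}{p-1}\cdot\frac{S_{\lfloor n/a\rfloor}}{n}.
\end{equation*}
The first term tends to $1/(a(p-1))$ since $\lfloor n/a\rfloor/n\to 1/a$, and the second term tends to $0$ by the logarithmic bound. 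This yields the desired limit.

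There is essentially no obstacle here; the argument is standard, and the only mild care needed is verifying the interchange of summation in the telescoping step and that the digit-sum bound indeed forces $S_m/n\to 0$ when $m=\lfloor n/a\rfloor$. Both are routine, so the proof reduces to a clean application of Legendre's formula followed by a standard asymptotic estimate.
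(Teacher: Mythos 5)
Your proof is correct and is the standard Legendre-formula argument; the paper itself does not prove this lemma but delegates it to the cited reference (Schikhof, Lemma 25.5), where essentially the same computation appears. The only (harmless) edge case is $\lfloor n/a\rfloor=0$ for finitely many $n$, which does not affect the limit, so nothing further is needed.
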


Throughout the rest of this section we fix an integer $k\geq2$ and
for any integer $z\geq 1$
we put
\[
\delta(z):=\frac{k-1}{z}-\left\lfloor \frac{k-1}{z}\right\rfloor.
\]
So defined 
$0\leq \delta <1-1/z$ and 
$\delta(z)=0$ if and 
only if $z$ is a divisor of $k-1$.

\begin{lemma}\label{lemma Sigma}
Let $s\geq 1$ be an integer and put
\[
\Sigma:=\sum_{j=0}^{s-1}
\left(
\left\lfloor\frac{k-1}{mp^j}\right\rfloor 
-\left\lfloor\frac{k-1}{mp^{j+1}}\right\rfloor
\right)p^j.
\]
Then,
\begin{equation}\label{Sigma_1 total}
\Sigma=
s\frac{k-1}{m}\frac{p-1}{p}
+
\delta(mp^s)p^{s-1}
-\delta(m)
-\sum_{s'=1}^{s-1}\delta(mp^{s'})
(p^{s'}-p^{s'-1}),
\end{equation}
where the sum over $s'$ is set to be identically zero if $s=1$.
\end{lemma}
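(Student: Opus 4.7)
The plan is to prove this identity by a direct calculation, reducing the sum $\Sigma$ to its stated closed form via the substitution
\[
\left\lfloor \frac{k-1}{mp^j}\right\rfloor = \frac{k-1}{mp^j} - \delta(mp^j),
\]
and then performing an index shift so that the telescoping behaviour becomes transparent. No estimation is needed; the lemma is a combinatorial identity, so the work is purely formal.

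First I would split each summand of $\Sigma$ according to the above substitution. The $j$-th term becomes
\[
\left(\frac{k-1}{mp^j} - \frac{k-1}{mp^{j+1}}\right)p^j \; - \; \delta(mp^j)\, p^j \; + \; \delta(mp^{j+1})\, p^j
= \frac{k-1}{m}\cdot\frac{p-1}{p} \; - \; \delta(mp^j)\,p^j \; + \; \delta(mp^{j+1})\,p^j.
\]
Summing over $j = 0, 1, \dots, s-1$, the first contribution collapses to $s \cdot \frac{k-1}{m}\cdot\frac{p-1}{p}$, which is the leading term of the claim. What remains is to handle
\[
-\sum_{j=0}^{s-1}\delta(mp^j)\,p^j + \sum_{j=0}^{s-1}\delta(mp^{j+1})\,p^j.
\]

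Next, I would reindex the second sum by setting $j' = j+1$ to rewrite it as $\sum_{j'=1}^{s} \delta(mp^{j'})\,p^{j'-1}$. Combining with the first sum and separating the boundary indices $j=0$ and $j' = s$ from the overlapping range $1 \leq j \leq s-1$ yields
\[
-\delta(m) + \delta(mp^s)\,p^{s-1} - \sum_{j=1}^{s-1} \delta(mp^j)\bigl(p^j - p^{j-1}\bigr),
\]
which is exactly the correction appearing in the statement. Putting the two computations together produces formula (\ref{Sigma_1 total}). The case $s=1$ is consistent since then the middle sum is empty and the boundary terms $-\delta(m)$ and $\delta(mp)\,p^{0}$ account for everything after the leading term.

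There is no real obstacle here; the only potentially error-prone step is keeping the shift of index and the boundary contributions straight, so I would be careful to verify the $s=1$ edge case separately to confirm the convention that the sum over $s'$ is empty.
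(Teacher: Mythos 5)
Your proof is correct and follows essentially the same route as the paper: the paper first regroups $\Sigma$ by an Abel-summation-type rearrangement of the floor terms and then applies $\lfloor \cdot \rfloor = \cdot - \delta(\cdot)$, while you apply the $\delta$-substitution first and then perform the reindex; the two orderings are interchangeable and lead to the identical telescoping cancellation.
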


\begin{proof}
Rearranging the terms of $\Sigma$, we obtain
\[
\Sigma=\left\lfloor\frac{k-1}{m}\right\rfloor
-
\left\lfloor\frac{k-1}{mp^s}\right\rfloor p^{s-1}
+
\sum_{s'=1}^{s-1}\left\lfloor\frac{k-1}{mp^{s'}}\right\rfloor
(p^{s'}-p^{s'-1}).
\]
Consequently, using $\lfloor (k-1)/mp^{s'}\rfloor=(k-1)/mp^{s'}-\delta(mp^{s'})$, 
we have
\begin{equation*}
\Sigma =
\left\lfloor\frac{k-1}{m}\right\rfloor -
\left\lfloor\frac{k-1}{mp^s}\right\rfloor p^{s-1}
+(s-1)\frac{k-1}{m}\frac{p-1}{p}-\sum_{s'=1}^{s-1}\delta(mp^{s'})
(p^{s'}-p^{s'-1}),
\end{equation*}
which implies (\ref{Sigma_1 total}) as required.
\end{proof}

\newpage

\begin{proof}[Proof of Lemma \ref{lemma theorem general estimate}]
Suppose that $s=0$. In this case $\gamma_0=1$ and $t=0$ so
$\tilde{r}(\lambda)$ reduces to
\[
\tilde{r}_1(\lambda):=p^{-\frac{1}{m(p-1)}}|1-\lambda ^m|^{\frac{1}{m}}.
\]
By Lemma \ref{lemma distance 1-lambda m char 0,p} we then have
\begin{equation}\label{distance 1- lambda n case 1}
\left |1 -\lambda ^n \right |= \left\{\begin{array}{ll}
1, & \textrm{if \quad $m\nmid n$,}\\
|n||1-\lambda ^m|, & \textrm{if \quad $m\mid n$.}
\end{array}\right.
\end{equation}
Let $N:=\lfloor (k-1)/m \rfloor$.
Then, from (\ref{distance 1- lambda n case 1}) we obtain
\begin{equation}\label{product 1- lambda case 1}
\prod_{n=1}^{k-1}|1-\lambda ^n|=\left |N !\right ||1-\lambda
^m|^{N }.
\end{equation}
By Lemma \ref{lemma order of p in factorial} we have
\[
\nu(N!)\leq \frac{N-1}{p-1}\leq \frac{k-1}{m(p-1)}
-\frac{1}{p-1},
\]
where each inequality become an equality if $(k-1)/m$ is a
non-negative
integer power of $p$. Together with (\ref{product 1- lambda case
1}) and the fact that $N=(k-1)/m-\delta(m)$ it follows that
\begin{equation}\label{prod s=0}
\prod_{n=1}^{k-1}|1-\lambda ^n| 
\geq
p^{\frac{1}{p-1}}|1-\lambda^m|^{-\delta(m)}\tilde{r}_1(\lambda)^{k-1},
\end{equation}
with equality if $(k-1)/m$ is a non-negative integer power of $p$.
Identifying $\tilde{r}(\lambda)$ with $\tilde{r}_1(\lambda)$
and using that $\delta(m)=0$ if $m$ divides $k-1$,
this completes the case $s=0$.

Now, suppose that $s\geq 1$.
By Lemma \ref{lemma distance 1-lambda m char 0,p} we have
\begin{equation}\label{1-lambda}
\left |1 -\lambda ^n \right |= \left\{\begin{array}{ll}
1, & \textrm{if \quad $m\nmid n$,}\\
|1-\lambda ^m|^{p^{\nu(n)}}, & \textrm{if \quad $m\mid n$ but
$mp^{s}\nmid n$,}\\
|n|p^t|\gamma_0-\lambda ^m|^{p^t}, &
\textrm{if \quad $m p^{s} \mid n$.}
\end{array}\right.
\end{equation}
Throughout the rest of this proof we put
\[
M:=\left\lfloor\frac{k-1}{mp^s}\right\rfloor, 
\quad \textrm{and } S_{k-1}:=\{ i\in[1,k-1]: mp^s\mid i\}.
\]
Note that so defined the cardinality $\#S_{k-1}=M$ and
$S_{k-1}=\{jmp^s:j\in[1,M]\}$. 
It follows from these observations that we have the following two identities
\begin{equation}\label{product |n| mp^s|n}
\prod_{n\in S_{k-1}}|n|= |M!|p^{-sM}, \quad\textrm{and}
\end{equation}
\begin{equation}\label{product p^s mp^s|n}
\prod_{n\in S_{k-1}}p^t|1-\lambda ^m|^{p^t}=
p^{tM}|1-\lambda ^m|^{p^tM}.
\end{equation}
Moreover, put
\[
\quad T_{k-1}:=\{i\in[1,k-1]: m\mid i \textrm{ but }
mp^s\nmid i \},
\]
and let $\Sigma$ be defined as in Lemma \ref{lemma Sigma}. 
Then, 
\begin{equation}\label{product p^nu p^s}
\prod_{
n\in T_{k-1} }|1-\lambda
^m|^{p^{\nu(n)}}= |1-\lambda ^m|^{\Sigma}.
\end{equation}
Using (\ref{1-lambda}) and multiplying the products
(\ref{product |n| mp^s|n}),
(\ref{product p^s mp^s|n}), 
and (\ref{product p^nu p^s})
we obtain
\begin{equation}\label{product 1 -lambda case 3}
\prod_{n=1}^{k-1}|1-\lambda ^n|=|M!|p^{-(s-t)M}|1-\lambda ^m|^{\Sigma}
|\gamma_0 -\lambda ^m|^{p^tM}.
\end{equation}
This equation is the starting point of the estimates given below.
By Lemma \ref{lemma order of p in factorial} we have
\begin{equation}\label{estimate M}
\nu (M!)\leq \frac{M-1}{p-1}\leq
\frac{k-1}{mp^s(p-1)} -\frac{1}{p-1},
\end{equation}
where each inequality become an equality if $(k-1)/mp^{s}$ is a
non-negative
integer power of $p$.
In view of Lemma \ref{lemma Sigma} we have
\begin{equation}\label{estimate Sigma_1}
\Sigma\leq
s\frac{k-1}{m}\frac{p-1}{p} + \delta(mp^s)p^{s-1},
\end{equation}
with equality if $mp^{s-1}$ is a divisor of $k-1$. 
Also recall that by definition
\[
\tilde{r}(\lambda)=R(s+1)^{\frac{1}{m}}p^{-\frac{s-t}{mp^s}}
|1-\lambda^m|^{s\frac{p-1}{mp} }
|\gamma_0-\lambda ^m|^{1/mp^{s-t}}.
\]
Applying
(\ref{estimate Sigma_1}) and (\ref{estimate M})
to the identity (\ref{product 1 -lambda case 3}),
and using $M=(k-1)/mp^s-\delta(mp^s)$, we obtain 
the following estimate
\begin{equation}\label{product inequality general}
\prod_{n=1}^{k-1}|1-\lambda^n|\geq
p^{\frac{1}{p-1}}
p^{(s-t)\delta(mp^s)}|1-\lambda^m|^{
\delta(mp^s) p^{s-1}
}
|\gamma_0-\lambda^m|^{-p^t\delta(mp^s)}
\tilde{r}(\lambda)^{(k-1)},
\end{equation}
with equality if $(k-1)/mp^{s}$ is a
non-negative
integer power of $p$. 
As $\delta(mp^s)=0$  if $mp^{s}$ is a divisor of $k-1$, 
we obtain the equality 
(\ref{prod eq p power case}) as required.

As to the inequality (\ref{product ineq general case}), 
first note that $p^{(s-t)\delta(mp^s)}\geq 1$.
Second, note that for $s\geq 1$ we have 
\[
|1-\lambda^m|^{p^{s-1}}\geq p^{-1/(p-1)},
\]
and $\delta(mp^s)\leq 1-1/mp^s$.
Together with (\ref{product inequality general}) 
this implies
\begin{equation}\label{product inequality general R(1)}
\prod_{n=1}^{k-1}|1-\lambda^n|\geq
p^{\frac{1}{mp^s(p-1)}}
|\gamma_0-\lambda^m|^{-p^t\delta(mp^s)
}
\tilde{r}(\lambda)^{k-1}.
\end{equation}
This completes the proof of the inequality
(\ref{product ineq general case}).

Concerning the inequality (\ref{prod ineq general case m=1 pnmid k-1}),
recall that $\gamma_0$ is minimizing $|\gamma_0-\lambda^m|$.
Hence, the case $s=1$ is a direct consequence of 
(\ref{product inequality general R(1)})
putting $m=1$ and using 
$|\gamma_0-\lambda|^{p^t}\leq |1-\lambda |$. 
As to the case $s\geq 2$,
suppose that 
$p^s\nmid k-1$ and put $\beta=\nu(k-1)$.
Then for all integers $j\in[\beta+1,s]$, we have $\delta(p^j)\geq 1/p^{j}$.
Hence, together with (\ref{Sigma_1 total}),  
for $m=1$, $s\geq 2$ and $p^s\nmid k-1$,
we obtain
the improved estimate
\begin{equation}\label{inequality sigma_1 s>1}
\Sigma \leq
s(k-1)\frac{p-1}{p} +\delta(p^s) p^{s-1} - (\beta-s)\frac{p-1}{p}.
\end{equation}
Also note that $\delta(p)\leq (p-1)/p$.
Accordingly, 
using the improved estimate (\ref{inequality sigma_1 s>1})
of $\Sigma$, 
we obtain 
the following improvement of (\ref{product inequality general R(1)}),
\[
\prod_{n=1}^{k-1}|1-\lambda^n|\geq
|1-\lambda |^{ 
-\delta(p)}
R(s+1)
|\gamma_0-\lambda|^{-p^t
\delta(p^s)
}
\tilde{r}(\lambda)^{k-1}.
\]
This completes the
proof of (\ref{prod ineq general case m=1 pnmid k-1}) and hence of the lemma.

\end{proof}

\begin{remark}
If $\gamma_0=1$, then by (\ref{prod s=0}) and
(\ref{product inequality general}),  
$\prod_{n=1}^{k-1}|1-\lambda^n|\geq R(1) \tilde{r}(\lambda)^{-(k-1)}$.
\end{remark}

\begin{proof}[Proof of Theorem \ref{theorem general estimate}] 
In view of Lemma \ref{lemma bk estimate indiff}
and Lemma \ref{lemma theorem general estimate} we have
\[
 ( \limsup |b_j|^{1/j}  )^{-1}\geq \tilde{r}(\lambda).
\]
This implies that $H_f$ converges on the open disk of radius
$\tilde{r}(\lambda)$.
Let $j\geq 2$ be an integer. 
By Lemma \ref{lemma bk estimate indiff} and
Remark \ref{remark strict} 
we have
\[
|b_j|\tilde{r}(\lambda)^j
<\tilde{r}(\lambda).
\]
It follows by Proposition \ref{proposition one-to-one} that
$H_f:D_{\tilde{r}(\lambda)}(0)\to D_{\tilde{r}(\lambda)}(0)$ is bijective.
Moreover,
according to the first statement of Proposition
\ref{proposition one-to-one}, the strict
inequality
\[
|b_j|\tilde{r}(\lambda)^j<
\tilde{r}(\lambda),
\]
implies that, if $H_f$ converges on the closed disk
$\overline{D}_{\tilde{r}(\lambda)}(0)$, then
$H_f:\overline{D}_{\tilde{r}(\lambda)}(0)\to
\overline{D}_{\tilde{r}(\lambda)}(0)$
is
bijective. Recall that as stated in Proposition \ref{proposition isometry},
$f:D_{1}(0)\to D_{1}(0)$ is a bijective isometry. Moreover,
$\tilde{r}(\lambda)<1$. Consequently, the linearization disk $\Delta_f$ includes
the disk $D_{\tilde{r}(\lambda)}(0)$ or $\overline{D}_{\tilde{r}(\lambda)}(0)$,
depending on whether or not, the conjugacy function $H_f$ converges on the
closed disk $\overline{D}_{\tilde{r}(\lambda)}(0)$.
This completes the proof of the theorem.

\end{proof}


\newpage

\section{The size of quadratic linearization disks}\label{section quadratic case}

The purpose of this section is to determine the exact radius
of linearization disks for quadratic polynomials and certain
power series perturbations of such maps.
Let $p$ be an odd prime and throughout this section let
\begin{equation}\label{definition of P II}
P_{\lambda}(x):=\lambda x +x^2\in\mathbb{C}_p[x], \quad \textrm{where $\lambda$
is not a root of unity and }  1/p<|1-\lambda|<1.
\end{equation}
We will prove Theorem \ref{thmA quadratic maps}
that states that the linearization disk $\Delta_{P_{\lambda}}$, is the open
disk $D_{r(P_{\lambda})}(0)$ of radius 
$r(P_{\lambda})=|1-\lambda|^{-1/p}\tilde{r}(\lambda)$.
Apart from Lemma \ref{lemma theorem general estimate},
the main ingredient in the proof of 
Theorem \ref{thmA quadratic maps} is the following lemma.

\begin{lemma}\label{lemma quadratic polynomials}
The coefficients of the
conjugacy $H_{P_{\lambda}}$ satisfy
\begin{equation}\label{bk quadratic simplified}
|b_{k}|=\frac{ 
|1-\lambda|^{ \left\lfloor \frac{k-1}{p}\right\rfloor }
} 
{\prod_{n=1}^{k-1}|1-\lambda^n|}, 
\quad \textrm{for } p\geq3, \textrm{and  } k\geq2.
\end{equation}
\end{lemma}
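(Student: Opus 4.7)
My plan is to prove the lemma by strong induction on $k\geq 2$. The base case $k=2$ is immediate from the recurrence of Lemma \ref{lemma bk estimate indiff}: one obtains $b_2 = 1/(\lambda(1-\lambda))$, so $|b_2| = 1/|1-\lambda|$, which matches the claimed formula because $\lfloor 1/p\rfloor = 0$ for $p\geq 3$.

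For the inductive step, the central observation is that for $f = P_\lambda$ the coefficient $C_l(k)$ appearing in (\ref{bk-equation}) has the explicit form
\[
C_l(k) = \binom{l}{k-l}\lambda^{2l-k} \quad \text{for } \lceil k/2\rceil \leq l \leq k-1,
\]
and vanishes otherwise, since $(\lambda x + x^2)^l = \sum_{j=0}^{l}\binom{l}{j}\lambda^{l-j}x^{l+j}$. Substituting into (\ref{bk-equation}) and applying the inductive hypothesis to each $|b_l|$, the contribution of index $l$ has absolute value
\[
T_l \;:=\; \frac{\bigl|\binom{l}{k-l}\bigr|\,|1-\lambda|^{\lfloor(l-1)/p\rfloor}}{|1-\lambda^{k-1}|\prod_{n=1}^{l-1}|1-\lambda^n|}.
\]

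The strategy is to exhibit a single index $l^{*}\in[\lceil k/2\rceil,\,k-1]$ such that $T_{l^{*}}$ equals the claimed value of $|b_k|$, and to verify $T_l < T_{l^{*}}$ strictly for every other $l$; the uniqueness of the dominant term together with the strong ultrametric inequality then forces $|b_k| = T_{l^{*}}$. When $p\nmid k-1$ I take $l^{*}=k-1$: the factor $|k-1|=1$ and $\lfloor(k-2)/p\rfloor = \lfloor(k-1)/p\rfloor$, so $T_{k-1}$ directly equals $|1-\lambda|^{\lfloor(k-1)/p\rfloor}/\prod_{n=1}^{k-1}|1-\lambda^n|$, as required. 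When $p\mid k-1$, the index $l=k-1$ is strictly sub-dominant (since $|k-1|<1$), and $l^{*}$ must be located at some smaller index; the choice is dictated by the interaction between $\nu(\binom{l}{k-l})$ and the valuations of the $|1-\lambda^n|$ in the denominator, and the extra factor $|1-\lambda|^{\lfloor(k-1)/p\rfloor}$ in the claimed formula arises as the resulting balance, in a manner parallel to the combinatorics already carried out in Lemma \ref{lemma theorem general estimate}.

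The principal obstacle is the case analysis in the regime $p\mid k-1$: identifying $l^{*}$ and verifying the strict inequalities $T_l < T_{l^{*}}$ for all $l\neq l^{*}$ requires tracking the $p$-adic valuation of $\binom{l}{k-l}$ via Lemma \ref{lemma order of p in factorial}, combined with the precise formula for $|1-\lambda^n|$ from Lemma \ref{lemma distance 1-lambda m char 0,p} (which itself branches on whether $\nu(n)$ is less than, equal to, or greater than $s=s(\lambda)$ defined in Section \ref{section geometry of unit sphere}). The hypothesis $|1-\lambda|>1/p$ is exactly what prevents the smallness of $|k-1|=p^{-\nu(k-1)}$ from being cancelled by compensating factors of $|1-\lambda|$, and is therefore essential to secure the strictness $T_l<T_{l^{*}}$ and hence the equality (rather than merely the inequality) asserted by the lemma.
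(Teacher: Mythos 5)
Your overall framework --- induction on $k$, writing $b_k$ via the recurrence (\ref{bk-equation}) with the explicit binomial form of $C_l(k)$, and isolating a single strictly dominant term so that ultrametricity forces equality --- is exactly the paper's route, and your Case $p\nmid k-1$ with $l^{*}=k-1$ matches the paper's Case I. The genuine gap is that you leave the essential case $p\mid k-1$ unexecuted: you assert that $l^{*}$ ``must be located at some smaller index'' and that its identification follows from a balance ``parallel to'' Lemma \ref{lemma theorem general estimate}, but this identification is precisely the content of the proof. The paper takes $l^{*}=k-2$, and the whole argument rests on two concrete verifications you do not supply: first, $C_{k-2}(k)=\tfrac{(k-2)(k-3)}{2}\lambda^{k-4}$ is a unit because $p\mid k-1$ and $p\geq 3$ force $k-2\equiv -1$ and $k-3\equiv -2 \pmod p$ (this is exactly where $p=2$ breaks down, as the paper's Remark following the lemma notes); second, $|C_{k-1}(k)b_{k-1}|\leq |p|\,|b_{k-2}|/|1-\lambda^{k-2}| < |b_{k-2}|=|C_{k-2}(k)b_{k-2}|$, using $|p|<|1-\lambda|$ and $|1-\lambda^{k-2}|=|1-\lambda|$. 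Without these, the ``extra factor'' $|1-\lambda|^{\lfloor (k-1)/p\rfloor}$ in the claimed formula is not derived but merely postulated.

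A secondary point: your plan to compare $T_l$ with $T_{l^{*}}$ for \emph{every} $l$ by tracking $\nu\bigl(\binom{l}{k-l}\bigr)$ via Lemma \ref{lemma order of p in factorial} is more machinery than is needed and obscures the actual mechanism. The induction hypothesis gives the exact value of $|b_j|$ for all $j\leq k-1$, from which one reads off that the sequence $|b_j|$ is strictly increasing (since $|1-\lambda|/|1-\lambda^{ip}|>1$). Combined with the trivial bound $|C_l(k)|\leq 1$, this disposes of all indices $l\leq k-3$ at once; the only delicate comparisons are between $l=k-1$ and $l=k-2$, as above. So the missing idea is not a global valuation count but the specific identification of $l^{*}=k-2$ and the two displayed estimates.
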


A key point 
in obtaining the 
exact absolute value of the coefficients of the corresponding 
conjugacy function $H_{P_{\lambda}}$
is that, using the assumptions
 $1/p<|1-\lambda|<1$ and $p\geq 3$, we prove that the coefficients
 of the conjugacy $\{b_k\}_{k\geq 1}$
 form a strictly increasing sequence in $k$.
 This may not be the case if we drop  
 the condition on $\lambda$ or $p$ respectively,
 since this  may yield cancellation of large terms
 (this will be explained later in 
 Remark  and \ref{remark p=2} and \ref{remark condition on lambda}).

Let $\mathbb{N}$ be the set of non-negative integers.
For $k\geq 2$ and $l\in[1,k-1]$, put
\begin{equation*}
A'_l(k):=\{(\alpha_1,\alpha_2)\in\mathbb{N}^2: \alpha_1+\alpha_2=l,
\textrm{ and } \alpha_1+2\alpha_2=k\}.
\end{equation*}
Given a rational number $x$, we denote by $\lceil x\rceil$ the smallest
integer greater than or equal to $x$.  Note that $A'_l(k)$ is non-empty 
if and only if $l\in[\lceil k/2\rceil,k]$. Given $k\geq 2$
and $l\in[\lceil k/2\rceil,k-1]$, the set $A'_l(k)$ of ordered pairs $(\alpha_1,\alpha_2)$
contains precisely one element,
\[
A'_l(k)=\{(2l-k,k-l)\}.
\]
For $k\geq 2$ and $l\in[\lceil k/2\rceil,k-1]$ we put
\begin{equation*}
C_l(k):=\frac{l!}{(2l-k)!(k-l)!}\lambda^{2l-k}.
\end{equation*}
It follows that for $k\geq2$, the coefficients of the conjugacy function
$H_{P_{\lambda}}(x)=\sum_{k\geq 1}b_kx^k$ satisfy the recurrence relation
\begin{equation}\label{bk_inC_l(k)}
b_k=\frac{1}{\lambda(1-\lambda^{k-1})}
\sum_{l=\lceil k/2\rceil}^{k-1}b_lC_l(k),
\end{equation}
starting with $b_1:=1$.

\begin{proof}[Proof of Lemma \ref{lemma quadratic polynomials}]
First note that $b_1=1$ and consequently
\[
b_{2}=\frac{1}{\lambda(1-\lambda)}.
\]
Hence, the lemma is clearly true for $k=2$ and $p\geq 3$.
We will proceed by induction in $k$.
Without loss of generality we may thus assume that the lemma
holds for $p\geq 3$ and $k-1\geq2$.
By the induction hypothesis
\begin{equation}\label{bk quadratic alternative}
|b_{k-1}|=\frac{|1-\lambda|^{\left\lfloor\frac{k-2}{p}\right\rfloor}}
{\prod_{n=1}^{k-2}|1-\lambda^n|}, \quad p\geq 3,
\textrm{ } k\geq3.
\end{equation}
Note that since $m=1$, by Lemma
\ref{lemma distance 1-lambda m char 0,p}, we have
\[
|1-\lambda^n|=|1-\lambda| \quad \textrm{if } p\nmid n.
\]
For future reference, also note that by the induction hypothesis
and the fact that for all integers $i \geq 1$,
\[
|1-\lambda |/|1-\lambda^{ip}|>1,
\]
we then have
\begin{equation}\label{b_k increasing}
|b_{k-1}|>|b_j|, \quad \textrm{for } j\in[1,k-2].
\end{equation}
Recall that since $C_l(k)$ is always an integer 
times a power of $\lambda$ we have
\begin{equation}\label{trivial bound C_l(k)}
|C_l(k)|\leq 1.
\end{equation}
Moreover, $A'_{k-1}(k)=\{(k-2,1)\}$ and hence
\[
C_{k-1}(k)=(k-1)\lambda^{k-2}.
\]
We identify two cases.

\textbf{Case I:} $p\nmid k-1$.
In this case
\[
|C_{k-1}(k)|=1.
\]
Hence, by the recurrence relation (\ref{bk_inC_l(k)}) and the facts
(\ref{b_k increasing}) and (\ref{trivial bound C_l(k)}),
using ultrametricity we obtain
\begin{equation}\label{b_k simple step}
|b_{k}|=\frac{|b_{k-1}|}{|1-\lambda^{k-1}|}.
\end{equation}
Moreover, as $p\nmid k-1$ we have $\lfloor (k-1)/p\rfloor=\lfloor (k-2)/p\rfloor$.
By (\ref{b_k simple step}) and the induction hypothesis (\ref{bk quadratic alternative})
we obtain (\ref{bk quadratic simplified}). This completes the induction step in this case.

\textbf{Case II:} $p\mid k-1$. First note that since $p\geq 3$, we may assume that
$k\geq 4$. Moreover, with the induction step for case I completed,
using (\ref{b_k increasing}) and (\ref{b_k simple step}),
replacing $k$ by $k-1$, we have
\begin{equation}\label{b_k increasing extended}
|b_{k-1}|=\frac{|b_{k-2}|}{|1-\lambda^{k-2}|}>|b_{k-2}|>|b_j|, \quad \textrm{for } j\in[1,k-3].
\end{equation}
Also note that by the assumption $p\mid k-1$ we have
\[
|C_{k-1}(k)|\leq |p|.
\]
Recall that by the condition of the lemma $|p|<|1-\lambda |$,
and hence
\[
|C_{k-1}(k)|<|1-\lambda|.
\]
By the assumption $p\mid k-1$
we have $|1-\lambda^{k-2}|=|1-\lambda|$,
and hence in view of (\ref{b_k increasing extended}) we obtain
\[
|C_{k-1}(k)b_{k-1}|<|b_{k-2}|.
\]
On the other hand, for $k\geq 4$, $A'_{k-2}(k)=\{(k-4,2)\}$ and
\begin{equation}\label{C_k-2(k)}
C_{k-2}(k)=\frac{(k-2)(k-3)}{2}\lambda^{k-4},
\end{equation}
and hence
\[
|C_{k-2}(k)|=1, \quad \textrm{since  } p\mid k-1, \textrm{ $p\geq 3$ and } k\geq 4.
\]
It follows that
\[
|C_{k-1}(k)b_{k-1}|<|b_{k-2}|=|C_{k-2}(k)b_{k-2}|.
\]
Then, by the recurrence relation (\ref{bk_inC_l(k)}) and the facts
 (\ref{trivial bound C_l(k)}) and (\ref{b_k increasing extended}),
using ultrametricity we have
\[
|b_{k}|=\frac{|b_{k-2}|}{|1-\lambda^{k-1}|}.
\]
Hence, using the induction hypothesis for $b_{k-2}$, we obtain
\[
|b_{k}|=\frac{|1-\lambda^{k-2}||1-\lambda|^{\left\lfloor\frac{k-2}{p}\right\rfloor}}
{\prod_{n=1}^{k-1}|1-\lambda^n|}.
\]
Again, since $p\mid k-1$ we have $|1-\lambda^{k-2}|=|1-\lambda|$ and
$\lfloor (k-1)/p\rfloor=\lfloor (k-2)/p\rfloor+1$. Accordingly,
\[
|1-\lambda^{k-2}||1-\lambda|^{\left\lfloor\frac{k-2}{p}\right\rfloor}=
|1-\lambda|^{\left\lfloor\frac{k-1}{p}\right\rfloor}.
\]
This completes the induction step for case II and together with
case I, it completes the proof of the lemma.

\end{proof}
\begin{remark}\label{remark p=2}
That the proof does not work for $p=2$ stems from the
fact that, in the case $k=3$, we then have $C_{k-2}(k)=0$ in (\ref{C_k-2(k)}). 
As a consequence, we obtain
\[
b_3=\frac{2}{1-\lambda}\cdot\frac{1}{\lambda(1-\lambda^2)}.
\]
Moreover, for larger $k$, we encounter a similar problem;
if $p=2$ and $2\mid k-1$, we may have that
$|C_{k-2}(k)|=|(k-2)(k-3)/2|$ is very small for large $k$.
\end{remark}
\begin{remark}\label{remark condition on lambda}
If the multiplier $\lambda $ satisfies $0<|1-\lambda|\leq 1/p$, 
then it is more complicated to calculate the absolute value 
of the coefficients of the conjugacy since in this case we may 
have cancellation of large terms. For example, consider the case $p=3$ and
\[
\lambda=1+3, \quad f(x)=\lambda x +x^2.
\]
Straightforward calculations of the coefficients of the conjugacy, 
starting with $b_1=1$, give
\[
b_2=1/[\lambda(1-\lambda)], \quad b_3=2/[\lambda(1-\lambda^2)(1-\lambda)],
\]
and
\[
b_4=\frac{1}{\lambda(1-\lambda ^3)}\left ( b_2+3\lambda^2 b_3\right).
\]
Note that
\[
|3|=|1-\lambda|=|1-\lambda^2|=\frac{1}{3},
\]
and hence
\[
|b_2|=|3\lambda^2 b_3|.
\]
Thus, by ultrametricity, the sum of these two terms in $b_4$ 
may add up to something small. In fact, we have
\[
 b_2+3\lambda^2 b_3 =
 \frac{1+5\lambda^2}{\lambda(1-\lambda^2)(1-\lambda)}
 =\frac{3^4}{4\cdot (5\cdot 3)\cdot 3}=\frac{3^2}{20}.
\]
Consequently, since $|1-\lambda^3|=3^{-2}$ we obtain
\[
|b_4|=1<|b_2|<|b_3|.
\]
Hence, it seems the coefficients of the conjugacy may not grow as fast as in the case
$1/p<|1-\lambda |<1$.
This example also indicates that, for large $k$, 
in this case it is a delicate problem to find the absolute value
 of the coefficients $b_k$  for $k$ large.
\end{remark}
\begin{lemma}\label{lemma quadratic polynomials tau}
Let $p\geq 3$ and $\tau=r(P_{\lambda})=|1-\lambda |^{-1/p}\tilde{r}(\lambda)$.
Then, for $k\geq 2$
the coefficients of
the conjugacy function $H_{P_{\lambda}}$ satisfy 
$|b_k|\tau^k<
\tau$. If $s\geq 1$ we have
\begin{equation}\label{quadratic estimate b_k}
|b_k|\tau^k
\leq 
R(s+1)\tau, 
\quad \textrm{for } k\geq 2.
\end{equation}
In particular, if  $k-1=p^{s+\alpha}$
for some integer $\alpha\geq 1$,
then $|b_k|\tau^k=p^{-\frac{1}{p-1}}\tau$.
\end{lemma}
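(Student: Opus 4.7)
The plan is to combine the exact formula $|b_k| = |1-\lambda|^{\lfloor(k-1)/p\rfloor}/\prod_{n=1}^{k-1}|1-\lambda^n|$ from Lemma \ref{lemma quadratic polynomials} with the three estimates of Lemma \ref{lemma theorem general estimate}. The assumption $1/p<|1-\lambda|<1$ forces $m=1$, so those estimates take a particularly simple form. Using the identity $\lfloor(k-1)/p\rfloor-(k-1)/p = -\delta(p)$ and $\tau = |1-\lambda|^{-1/p}\tilde{r}(\lambda)$, the quantity of interest factors as
$$\frac{|b_k|\tau^k}{\tau} = |1-\lambda|^{-\delta(p)}\cdot\frac{\tilde{r}(\lambda)^{k-1}}{\prod_{n=1}^{k-1}|1-\lambda^n|},$$
so the task reduces to selecting the right bound for the last factor in each regime.

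For the main regime $s\geq 1$ I would use the three statements of Lemma \ref{lemma theorem general estimate} in turn. When $k-1 = p^{s+\alpha}$ for some $\alpha\geq 1$, then $(k-1)/p^s = p^\alpha$ is a non-negative power of $p$, so the equality (\ref{prod eq p power case}) gives $\prod^{-1}\tilde{r}(\lambda)^{k-1} = p^{-1/(p-1)}$; since $p\mid k-1$ forces $\delta(p)=0$, this yields the claimed equality $|b_k|\tau^k = p^{-1/(p-1)}\tau$. When $p^s\mid k-1$ but $(k-1)/p^s$ is not a pure power of $p$, the general inequality (\ref{product ineq general case}) with $\delta(p^s)=0$ gives $\prod^{-1}\tilde{r}(\lambda)^{k-1}\leq R(s+1)$, and again $\delta(p)=0$, producing $|b_k|\tau^k\leq R(s+1)\tau$. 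When $p^s\nmid k-1$, I would invoke the refined inequality (\ref{prod ineq general case m=1 pnmid k-1}); its extra factor $|1-\lambda|^{\delta(p)}$ is tailored precisely to cancel the $|1-\lambda|^{-\delta(p)}$ in the factored expression above, once more giving $|b_k|\tau^k \leq R(s+1)\tau$. Since $R(s+1)<1$, the strict inequality $|b_k|\tau^k<\tau$ follows at once.

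The remaining case $s=0$ is the main obstacle, because the general estimates of Lemma \ref{lemma theorem general estimate} reduce to a bound involving $R(1)=p^{-1/(p-1)}$ multiplied by a factor $|1-\lambda|^{-\delta(p)}$ that may exceed $1$. I would therefore argue directly: Lemma \ref{lemma distance 1-lambda m char 0,p} gives $|1-\lambda^n|=|n||1-\lambda|$ for every $n\geq 1$, so $\prod_{n=1}^{k-1}|1-\lambda^n| = |(k-1)!|\,|1-\lambda|^{k-1}$. Writing $\tilde{r}(\lambda) = R(1)|1-\lambda|$ and applying Lemma \ref{lemma order of p in factorial} to express $\nu((k-1)!)$ via the base-$p$ digit sum $S_{k-1}$ of $k-1$, the inequality $|b_k|\tau^k<\tau$ reduces to the arithmetic statement $p\,S_{k-1} > (p-1)\,d_0\,\nu(1-\lambda)$, where $d_0$ is the last base-$p$ digit of $k-1$. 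Since $S_{k-1}\geq d_0$, $d_0 < p$, and $\nu(1-\lambda)<1$ (equivalent to $|1-\lambda|>1/p$), this inequality holds. The equality case $k-1 = p^\alpha$, $\alpha\geq 1$ in this regime has $d_0=0$ and $S_{k-1}=1$, recovering $|b_k|\tau^k = p^{-1/(p-1)}\tau$ and matching the general claim.
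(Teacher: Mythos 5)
Your proposal is correct and follows essentially the same route as the paper: combine the exact formula for $|b_k|$ from Lemma \ref{lemma quadratic polynomials} with the three estimates of Lemma \ref{lemma theorem general estimate}, splitting on whether $p^s$ divides $k-1$, with the refined inequality (\ref{prod ineq general case m=1 pnmid k-1}) supplying exactly the factor $|1-\lambda|^{\delta(p)}$ needed to absorb $|1-\lambda|^{-\delta(p)}$, and the equality (\ref{prod eq p power case}) giving the case $k-1=p^{s+\alpha}$. Your treatment of $s=0$ via the exact product $\prod_{n=1}^{k-1}|1-\lambda^n|=|(k-1)!|\,|1-\lambda|^{k-1}$ and the digit sum is a valid (and slightly more explicit) substitute for the paper's derivation from (\ref{product ineq general case}) together with $|1-\lambda|<R(1)$; both arguments are sound.
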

\begin{proof}
Let $k\geq 2$ be an integer. By definition
\begin{equation*}\label{tau^k}
\tau^{-(k-1)}=|1-\lambda|^{\frac{k-1}{p}}
\tilde{r}(\lambda)^{-(k-1)}.
\end{equation*}
Hence, to prove the inequality (\ref{quadratic estimate b_k}) we need to prove that,
for $s\geq 1$ and $k\geq 2$,

\begin{equation}\label{need to prove}
|b_k|
\leq 
R(s+1)|1-\lambda|^{\frac{k-1}{p}}
\tilde{r}(\lambda)^{-(k-1)}.
\end{equation}
Note that by  Lemma \ref{lemma quadratic polynomials} we have the identity
(\ref{bk quadratic simplified}).
We have two cases. First, suppose that $p^s\mid k-1$. Then, by
(\ref{product ineq general case})
and (\ref{bk quadratic simplified}), 
using the assumption that $s\geq 1$,
we obtain (\ref{need to prove}).
Second, suppose that $p^s\nmid k-1$. 
Then, by  (\ref{prod ineq general case m=1 pnmid k-1}) and
(\ref{bk quadratic simplified}),
we obtain (\ref{need to prove}).
This completes the proof of the inequality
(\ref{quadratic estimate b_k}).

For $s=0$ we need to prove that 
$|b_k|\tau^k<\tau$ or equivalently
\begin{equation}\label{need to prove s=0}
|b_k|
< 
|1-\lambda|^{\frac{k-1}{p}}
\tilde{r}(\lambda)^{-(k-1)}.
\end{equation}
In view of 
(\ref{product ineq general case})
and (\ref{bk quadratic simplified}), 
we have
\[
|b_k|\leq |1-\lambda|^{\lfloor \frac{k-1}{p}\rfloor}p^{-1/(p-1)}\tilde{r}(\lambda)^{-(k-1)}.
\]
The inequality (\ref{need to prove s=0}) follows immediately if $p$ divides $k-1$.
If $p$ is not a divisor of $k-1$ we have 
$\lfloor (k-1)/p\rfloor \geq (k-1)/p-(p-1)/p$.
Also note that since $s=0$ we have $|1-\lambda|<p^{-1/(p-1)}$.
It follows that
\[
|b_k|<|1-\lambda|^{\frac{k-1}{p}}p^{1/p-1/(p-1)}\tilde{r}(\lambda)^{-(k-1)}.
\]
This completes the proof of (\ref{need to prove s=0}).

As to the last statement of the lemma,
given $k$ such that $ k-1=p^{s+\alpha}$,
for some integer $\alpha\geq 1$, we have 
the equality (\ref{prod eq p power case}) and hence by
Lemma \ref{lemma quadratic polynomials} we have
\[
|b_k|= p^{-\frac{1}{p-1}}\tilde{r}(\lambda)^{-(k-1)}|1-\lambda|^{\frac{k-1}{p}},
\quad \textrm{for $\alpha \geq 1$.}
\]
This completes the proof of the lemma.

\end{proof}

\begin{theorem}\label{theorem quadratic maps}
Let $p\geq 3$ and let $P_{\lambda}$ be a quadratic polynomial of the form
(\ref{definition of P II}).
Then, the linearization disk $\Delta_{P_{\lambda}}=D_{r(P_{\lambda})}(0)$, where
$r(P_{\lambda})=|1-\lambda|^{-1/p}\tilde{r}(\lambda)$.
\end{theorem}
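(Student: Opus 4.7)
The plan is to establish the two matching inclusions $\Delta_{P_\lambda}\supseteq D_\tau(0)$ and $\Delta_{P_\lambda}\subseteq D_\tau(0)$, where $\tau:=|1-\lambda|^{-1/p}\tilde{r}(\lambda)$; both come directly from Lemma \ref{lemma quadratic polynomials tau}.

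For the first inclusion, a short direct inspection of the explicit formula for $\tilde{r}(\lambda)$ shows $\tau<1$ and $\tau\in|\mathbb{C}_p^*|$ under the hypotheses $p\geq 3$ and $1/p<|1-\lambda|<1$. Consequently $D_\tau(0)$ is a rational open disk, and $P_\lambda$ acts on it as a bijective isometry by Proposition \ref{proposition isometry}. Lemma \ref{lemma quadratic polynomials tau} furnishes $|b_k|\tau^k<\tau$ for every $k\geq 2$, which on the one hand bounds $\limsup|b_k|^{1/k}\leq 1/\tau$, so $H_{P_\lambda}$ converges on $D_\tau(0)$, and on the other hand is exactly the hypothesis of Proposition \ref{proposition one-to-one}(2) applied to $H_{P_\lambda}\in\mathcal{U}_\tau$ on $D_\tau(0)$. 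That proposition then delivers a bijection $H_{P_\lambda}:D_\tau(0)\to D_\tau(0)$, which conjugates $P_\lambda$ to $T_\lambda$ on $D_\tau(0)$.

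For the reverse inclusion, I would use the sharp equality at the end of Lemma \ref{lemma quadratic polynomials tau}: along the infinite subsequence $k=p^{s+\alpha}+1$ with $\alpha\geq 1$, one has $|b_k|\tau^k=p^{-1/(p-1)}\tau$, so $|b_k|\tau^k\not\to 0$ and $H_{P_\lambda}$ does not converge at any point of $S_\tau(0)$. This pins down the radius of convergence of $H_{P_\lambda}$ as exactly $\tau$. A short uniqueness argument then finishes the proof: any formal $H(x)=\sum c_k x^k$ commuting with $T_\lambda$ satisfies $c_k(\lambda^{k-1}-1)=0$ for all $k\geq 2$, which forces $H(x)=c_1 x$ since $\lambda$ is not a root of unity. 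Thus every linearizing conjugacy of $P_\lambda$ fixing $0$ is a nonzero scalar multiple of $H_{P_\lambda}$ and inherits the same radius of convergence, ruling out linearization on any disk strictly larger than $D_\tau(0)$.

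The main obstacle is the sharpness already encoded in Lemma \ref{lemma quadratic polynomials tau}: the argument genuinely needs both the uniform strict inequality $|b_k|\tau^k<\tau$ (for convergence and for the Weierstrass-type bijectivity on $D_\tau(0)$) and the matching equality $|b_k|\tau^k=p^{-1/(p-1)}\tau$ along an infinite subsequence (to force the exact radius of convergence). Once the lemma is granted, the remainder is routine bookkeeping, with the hypothesis $|1-\lambda|>1/p$ entering precisely to keep $\tau<1$ and, behind the scenes, to guarantee the coefficient identity in Lemma \ref{lemma quadratic polynomials}.
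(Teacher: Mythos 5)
Your proposal is correct and follows essentially the same route as the paper: Lemma \ref{lemma quadratic polynomials tau} gives convergence of $H_{P_\lambda}$ on $D_\tau(0)$ and the strict bound $|b_k|\tau^k<\tau$ needed for Proposition \ref{proposition one-to-one}, while the equality $|b_k|\tau^k=p^{-1/(p-1)}\tau$ along $k=p^{s+\alpha}+1$ forces divergence on $S_\tau(0)$. The one place you go beyond the paper's write-up is the explicit formal-uniqueness argument (any $H$ commuting with $T_\lambda$ is linear, hence every normalized conjugacy equals $H_{P_\lambda}$ and shares its radius of convergence), which the paper leaves tacit when concluding $\Delta_{P_\lambda}=D_\tau(0)$ from the divergence of $H_{P_\lambda}$ on the boundary sphere; spelling it out is a small but genuine gain in rigor. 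One minor mischaracterization: the hypothesis $|1-\lambda|>1/p$ is not really there to ensure $\tau<1$ (that follows from $\tilde r(\lambda)$ being small enough regardless), but, as you also note, it is what makes the exact coefficient formula of Lemma \ref{lemma quadratic polynomials} hold by preventing cancellation in the recursion.
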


\begin{proof}
First note that by Lemma \ref{lemma quadratic polynomials tau}
$H_{P_{\lambda}}$ converges on the open disk $D_{\tau}(0)$,
because $|b_k|\tau^k$ is bounded, 
and hence $|b_k|r^k\to 0$ for all $r<\tau$.
In fact, $H_{P_{\lambda}}$ diverges on the sphere $S_{\tau}(0)$;
let $I\geq s+1$ be an integer, then, by Lemma \ref{lemma quadratic polynomials tau}
we have
\[
|b_{p^I+1}|\tau ^{p^I+1}=
p^{-\frac{1}{p-1}}\tau,
\]
which does not approach zero as $I$ goes to infinity.
This proves that $H_{P_{\lambda}}$ diverges on the sphere
$S_{\tau}(0)$.
Finally, by Lemma \ref{lemma quadratic polynomials tau} we have
\[
|b_{k}|\tau ^{k}<\tau, \quad k\geq 2.
\]
Consequently, by Proposition \ref{proposition one-to-one},
$H_{P_{\lambda}}:D_{\tau}(0)\to D_{\tau}(0)$ is a bijection.
Recall that, as stated in Proposition \ref{proposition isometry},
$P_{\lambda}: D_{1}(0)\to D_{1}(0)$ is a bijective isometry.
Since $\tau<1$, it then follows that
the linearization disk of the quadratic polynomial $P_{\lambda}$ is the disk
$\Delta_{P_{\lambda}}=D_{\tau}(0)$.
This completes the proof of the theorem.

\end{proof}

\begin{corollary}\label{corollary quadratic power series}
Let $p\geq 3$ and $f\in\mathcal{G}_{\lambda}$, with $1/p<|1-\lambda|<1$, be of the form
\[
f(x)=\lambda x +\sum_{i = 2}^{\infty}a_ix^i\in\mathcal{O}_p[[x]], \quad
\textrm{where $|a_2|=1$ and $|a_2^2-a_3|=1$.}
\]
Then, the linearization disk $\Delta_{f}=D_{r(f)}(0)$, where
$r(f)=|1-\lambda|^{-1/p}\tilde{r}(\lambda)$.
\end{corollary}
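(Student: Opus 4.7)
The plan is to reduce to a suitable analogue of Lemma \ref{lemma quadratic polynomials}, showing that under the stated hypotheses the coefficients of $H_f$ have the same absolute values as those of $H_{P_{\lambda}}$; once that is established, Lemma \ref{lemma quadratic polynomials tau} and the argument of Theorem \ref{theorem quadratic maps} apply verbatim. As a preliminary reduction, conjugating $f$ by the isometry $\psi(x)=a_2 x$ of $D_1(0)$ preserves the linearization radius and replaces $f$ by $\tilde f(y)=\lambda y+y^2+\tilde a_3 y^3+\cdots$ with $\tilde a_3=a_3/a_2^2$. The hypothesis $|a_2^2-a_3|=1$ becomes $|1-\tilde a_3|=1$, so I may henceforth assume $a_2=1$ and $|1-a_3|=1$.

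The core of the argument is an induction on $k$ showing
\[
|b_k|=\frac{|1-\lambda|^{\lfloor(k-1)/p\rfloor}}{\prod_{n=1}^{k-1}|1-\lambda^n|},
\qquad
|b_k|>|b_{k-1}|,
\]
by way of the general recurrence $b_k=\frac{1}{\lambda(1-\lambda^{k-1})}\sum_{l=1}^{k-1}b_l C_l(k)$, where $C_l(k)$ is the coefficient of $x^k$ in $f(x)^l$. Thanks to the strict monotonicity supplied by the induction hypothesis together with the trivial bound $|C_l(k)|\leq 1$, all contributions with $l\leq k-3$ satisfy $|b_l C_l(k)|<|b_{k-2}|$; hence only the $l=k-1$ and $l=k-2$ terms require analysis. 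Exactly as in the quadratic case, only the monomial $\alpha_2=1$ contributes to $C_{k-1}(k)$, giving $C_{k-1}(k)=(k-1)\lambda^{k-2}$ and $|C_{k-1}(k)|=|k-1|$. For $C_{k-2}(k)$ the excess-$2$ compositions are $\alpha_2=2$ and $\alpha_3=1$, so
\[
C_{k-2}(k)=(k-2)\lambda^{k-4}\Bigl[\tfrac{k-3}{2}+\lambda\, a_3\Bigr].
\]

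The main obstacle is to show $|C_{k-2}(k)|=1$ in the critical case $p\mid k-1$, which is precisely where the induction relies on $l=k-2$ being the dominant term, since $|C_{k-1}(k)|\leq 1/p<|1-\lambda|$ renders the $l=k-1$ term subdominant. When $p\mid k-1$ and $p\geq3$, one has $k-3\equiv -2\pmod p$, so $(k-3)/2\equiv -1\pmod p$; reducing modulo the maximal ideal of $\mathcal{O}_p$ and using $\lambda\equiv 1$ gives $\tfrac{k-3}{2}+\lambda a_3\equiv -(1-a_3)$, which is nonzero by the hypothesis $|1-a_3|=1$. Hence $|C_{k-2}(k)|=1$. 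This is exactly the step where the cubic-coefficient hypothesis is essential: without it, the $x^2\cdot x^2$ and $x\cdot x^3$ contributions could cancel and spoil the induction, in the manner illustrated by Remark \ref{remark condition on lambda}. With this in place, the inductive identification of the dominant term in the recurrence proceeds precisely as in Lemma \ref{lemma quadratic polynomials}, yielding the claimed formula together with strict monotonicity $|b_k|>|b_{k-1}|$.

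Since the absolute values $|b_k|$ coincide with the quadratic case, Lemma \ref{lemma quadratic polynomials tau} applies unchanged: $|b_k|\tau^k<\tau$ for every $k\geq 2$, while $|b_{p^I+1}|\tau^{p^I+1}=p^{-1/(p-1)}\tau$ along the subsequence $k=p^I+1$, $I\geq s+1$. By Proposition \ref{proposition one-to-one} the map $H_f:D_\tau(0)\to D_\tau(0)$ is bijective, yet $H_f$ diverges on $S_\tau(0)$. Combining with the fact that $f$ is an isometry of $D_1(0)$ and $\tau<1$, we conclude $\Delta_f=D_\tau(0)$ with $\tau=|1-\lambda|^{-1/p}\tilde r(\lambda)$, as required.
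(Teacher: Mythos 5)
Your proposal is correct and follows essentially the same strategy as the paper: track the recurrence for $b_k$, show the induction from Lemma \ref{lemma quadratic polynomials} goes through because the dominant terms $l=k-1$ (when $p\nmid k-1$) and $l=k-2$ (when $p\mid k-1$) retain unit absolute value under the hypotheses $|a_2|=1$, $|a_2^2-a_3|=1$, and then transfer directly to Lemma \ref{lemma quadratic polynomials tau} and Theorem \ref{theorem quadratic maps}. The only real difference is your preliminary conjugation by $\psi(x)=a_2x$ to normalize $a_2=1$; the paper instead carries $a_2$ along and verifies the key identity $|C_{k-2}(k)|=|\lambda^{k-4}a_2^2(k-2)(k-3)/2+\lambda^{k-3}a_3(k-2)|=1$ directly, factoring out $(k-2)\lambda^{k-4}$ and reducing modulo the maximal ideal to $-(a_2^2-a_3)$. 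Your normalization makes the residue computation $\tfrac{k-3}{2}+\lambda a_3\equiv -(1-a_3)$ a touch cleaner, and your explicit remark that the $l=k-1$ term becomes subdominant precisely because $|k-1|\leq 1/p<|1-\lambda|$ highlights where the hypothesis $1/p<|1-\lambda|$ enters, but these are cosmetic improvements rather than a different route.
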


\begin{proof}
By the conditions imposed on $a_2$ and $a_3$, the
same terms as in the proof of Lemma
\ref{lemma quadratic polynomials} will be strictly larger than all the others in
(\ref{bk-equation}). This follows since, for $f$ we obtain

1) $A_{k-1}(k)=\{(k-2,1,0,\dots,0)\}$, and hence we have
$C_{k-1}(k)=(k-1)\lambda^{k-2}a_2$ (with $|a_2|=1$),

2) $A_{k-2}(k)=\{(k-4,2,0,\dots,0),(k-3,0,1,0,\dots,0)\}$,
but since $|a_2^2-a_3|=1$ we then have
\[
|C_{k-2}(k)|=|\lambda^{k-4}a_2^2(k-2)(k-3)/2+\lambda^{k-3}a_3(k-2)|
=|\lambda a_2^2[(k-1)-2]/2+a_3|=1,
\]
for $p\geq 3$ and $p\mid k-1$. 
In the last equality we also use that $0<|1-\lambda|<1$.
Hence, all steps in the proof of Lemma \ref{lemma quadratic polynomials}
hold true replacing $P_{\lambda}$ by $f$,
and by the same arguments as in the proof of
Theorem \ref{theorem quadratic maps},
we obtain the desired result concerning the size of the linearization disk.
\end{proof}


\newpage

\section{Periodic points of quadratic-like maps
}\label{section corollary A}

The purpose of this section is to prove
Corollary \ref{corC no periodic boundary} that states
that for the quadratic polynomials and their power series
perturbations studied in the previous section,
the boundary of the linearization disk is free from 
periodic points. Let $p\geq 3$, and let $f$ be of the form
\begin{equation}\label{minimally ramified maps}
f(x)=\lambda x+\sum_{i=2}^\infty a_ix^i\in\mathcal{G}_{\lambda},
\textrm{ with $|1-\lambda|<1$,
$|a_2|=1$, and $|a_2^2-a_3|=1$.}
\end{equation}
For convenience, put 
\begin{equation}\label{Psi}
\Psi(\lambda):=|1-\lambda|^{-\frac{1}{p}}p^{-\frac{s-t}{p^s}}
|\gamma_0-\lambda|^{\frac{1}{p^{s-t}}}.
\end{equation}
We first show that, apart from $x=0$, a map 
$f$ of the form (\ref{minimally ramified maps}), 
has no periodic point in the
open disk $D_{\rho}(0)$, where
the radius $\rho=\min\{|1-\lambda|,\Psi(\lambda)\}$.
Second, we prove that if in addition $|1-\lambda|>1/p$, then
the linearization radius $r(f)<\rho$.
Apart from Corollary  \ref{corB quadratic power series},
the main ingredient in the proof
is the following lemma 
in which we localize
all periodic orbits of $f$.

\begin{lemma}\label{lemma localization of periodic points}
Let $p\geq 3$ and let $f$ be of the 
form (\ref{minimally ramified maps}).
Then,
the periodic points of $f$ in the open unit disk, are all of minimal
period $p^n$ for some integer $n\geq 0$, and
\begin{enumerate}

\item for $n=0$, the periodic fixed points are $x=0$ and
$x_0\neq 0$ with $|x_0|=|1-\lambda|$,

\item for $n\in[1,s-1]$ where $s\geq 2$, 
the periodic points of minimal period $p^n$ 
are located on the sphere of radius
$|1-\lambda|^{\frac{p-1}{p}}$, about the origin,

\item for $n= s$ where $s\geq1$, the periodic points 
of minimal period $p^n$ are located on the sphere of 
radius $\Psi(\lambda)$, about the origin, and

\item for $n\geq s+1$, the periodic points of minimal period $p^n$ 
are located on the sphere of radius $p^{-1/p^{n}}$, about the origin.
\end{enumerate}
In particular, apart from zero, $f$ has no periodic point in the
open disk $D_{\rho}(0)$, where
the radius $\rho=\min\{|1-\lambda|,\Psi(\lambda)\}$. 
Furthermore, $\rho=\Psi(\lambda)$ if and only if
$s-t\geq 2$ (or $s-t=1$ and $|\gamma_0-\lambda|\leq |1-\lambda|^2$).

\end{lemma}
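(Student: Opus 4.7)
My plan is to locate all periodic orbits of $f$ in $D_1(0)$ by analyzing the Newton polygon of $f^{\circ p^n}(x)-x$ at each level $n\geq 0$. Since $|1-\lambda|<1$ and $|a_2|=1$, the reduction $\bar f(x)=x+\bar a_2 x^2+\cdots\in\overline{\mathbb F}_p[[x]]$ is a parabolic power series whose only fixed point in the residue disk is $\bar x=0$; combined with the minimal ramification hypothesis $|a_2^2-a_3|=1$, a standard lifting argument \cite{Rivera-Letelier:2003thesis} shows that every nonzero periodic orbit of $f$ in $D_1(0)$ has minimal period $p^n$ for some $n\geq 0$. This settles the ``all minimal periods are powers of $p$'' half of the statement.

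For $n=0$, the Newton polygon of $f(x)-x=(\lambda-1)x+a_2 x^2+a_3x^3+\cdots$ has a single segment from $(1,\nu(\lambda-1))$ down to $(2,0)$, yielding the unique nonzero fixed point $x_0$ with $|x_0|=|1-\lambda|$. For $n\geq 1$, I would proceed by induction on $n$: the key claim to establish is that the Newton polygon of $f^{\circ p^n}(x)-x$, restricted to $D_1(0)$, decomposes into a chain of slope segments indexed by $j\in[0,n]$, where the segment for $j\geq 1$ localizes the exact-period-$p^j$ orbits on one common sphere about the origin whose radius is determined by its endpoint heights: a left endpoint of height $\nu(\lambda^{p^{j-1}}-1)$ (controlled by Lemma \ref{lemma distance 1-lambda m char 0,p}) and a right endpoint of height $0$ (preservation of minimal ramification). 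Evaluating the resulting radius in the three regimes of Lemma \ref{lemma distance 1-lambda m char 0,p} with $m=1$ (case 2 for $j-1<s$, the crossing at $j=s$, and case 3 for $j>s$) then yields the three stated sphere radii $|1-\lambda|^{(p-1)/p}$, $\Psi(\lambda)$, and $p^{-1/p^j}$, respectively.

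The main technical obstacle I expect is the inductive step showing that the terminal coefficient at each level has unit absolute value, i.e.\ that minimal ramification is preserved under $p$-fold iteration. This is a compositional analysis analogous to Lemma \ref{lemma quadratic polynomials}, requiring all three hypotheses $p\geq 3$, $|a_2|=1$, and $|a_2^2-a_3|=1$ in an essential way to rule out the cancellation phenomena illustrated in Remarks \ref{remark p=2} and \ref{remark condition on lambda}. Once the four sphere radii are pinned down, the claim $\rho=\min\{|1-\lambda|,\Psi(\lambda)\}$ follows from the elementary inequalities $|1-\lambda|^{(p-1)/p}>|1-\lambda|$ (since $|1-\lambda|<1$) and $p^{-1/p^n}>R(s+1)>|1-\lambda|$ for $n>s$ (because $1/p^n<1/(p^s(p-1))$); and the characterization of when $\rho=\Psi(\lambda)$ reduces to a direct comparison between $\Psi(\lambda)$ and $|1-\lambda|$ via formula (\ref{Psi}), giving the stated equivalence $s-t\geq 2$ or ($s-t=1$ with $|\gamma_0-\lambda|\leq |1-\lambda|^2$).
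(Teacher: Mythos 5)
Your overall architecture --- locating periodic orbits via the Newton polygons of $f^{p^n}-\mathrm{id}$, invoking Rivera-Letelier's classification for minimal ramification, and reducing the slope computations to Lemma \ref{lemma distance 1-lambda m char 0,p} --- is the same as the paper's (Lemmas \ref{lemma_oneslope} and \ref{lemma kappa}). But the claimed Newton-polygon geometry is wrong. You assert the period-$p^j$ segment runs from ``a left endpoint of height $\nu(\lambda^{p^{j-1}}-1)$'' down to ``a right endpoint of height $0$'' over a span of $p^j$; that gives slope $-\nu(\lambda^{p^{j-1}}-1)/p^j$, which does \emph{not} produce the stated radii. The correct picture is that $\mathcal{N}(f^{p^n}-\mathrm{id})$ is $\mathcal{N}(f^{p^{n-1}}-\mathrm{id})$ translated upward by $\nu(\lambda^{p^n}-1)-\nu(\lambda^{p^{n-1}}-1)$ with a \emph{single} new segment of span $p^n$ appended, so the new segment's left endpoint has height $\nu(\lambda^{p^n}-1)-\nu(\lambda^{p^{n-1}}-1)$ and its slope is $\kappa_n=-\bigl(\nu(1-\lambda^{p^n})-\nu(1-\lambda^{p^{n-1}})\bigr)/p^n$. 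Concretely, for $\lambda=1+3^{1/4}$ and $j=1$ your formula gives radius $3^{-1/12}$, while $\Psi(\lambda)=|1-\lambda|^{(p-1)/p}=3^{-1/6}$.

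You also misidentify the ``main technical obstacle'' as showing that minimal ramification is preserved under $p$-fold iteration. But minimal ramification is \emph{defined} as $i_n=1+p+\cdots+p^n$ for all $n\geq 0$, i.e.\ it is already a statement about every $p^n$-th iterate, and it is handed to you directly by Proposition \ref{proposition minimal} (Rivera-Letelier). There is nothing to re-prove by iteration. What does require proof --- and what your sketch omits --- is that the $p^n$ new fixed points appearing at level $n$ all lie on \emph{one} sphere, so that the polygon gains a single segment rather than a broken chain. The paper gets this from three facts: minimal ramification gives exactly $p^n$ new fixed points; by Proposition \ref{proposition minimal period} these must have minimal period exactly $p^n$ and hence form a single orbit; and since $f$ is an isometry on $D_1(0)$ that orbit lies on a single sphere, forcing a single segment of slope $\kappa_n$. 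Without the isometry input your ``one common sphere'' claim is unsupported, and it is precisely this step (not minimal ramification itself) that needs the hypotheses $p\geq 3$, $|a_2|=1$, $|a_2^2-a_3|=1$.
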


To localize the periodic orbits we analyze the Newton polygons of iterates of $f$, using
the fact that all the periods of periodic points of $f$ are powers of the prime $p$, 
and that $f$ is minimally ramified
\cite[\textit{Exemple} 3.19]{Rivera-Letelier:2003thesis} as well as isometric.
The following proposition is known, 
see \textit{e.g.} \cite[p. 333]{Lubin:1994} or \cite[p. 190]{Rivera-Letelier:2003thesis}.

\begin{proposition}\label{proposition minimal period}
Let $f\in\mathcal{O}_p[[x]]$ be such that $f(0)=0$ and $|f'(0)-1|<1$.
Then, the minimal period of each periodic point of $f$ in
the open unit disk is a power of $p$.
\end{proposition}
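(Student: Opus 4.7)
The plan is to reduce to showing that $f$ admits no periodic orbit of prime minimal period $q\neq p$, and then derive a contradiction by comparing the Weierstrass degrees of $f-\mathrm{id}$ and $f^{\circ q}-\mathrm{id}$. Suppose $x_0\in D_1(0)$ has minimal period $N$ under $f$ with some prime divisor $q\neq p$, and set $M:=N/q$ and $g:=f^{\circ M}$. Using the factorization $\lambda^M-1=(\lambda-1)(1+\lambda+\cdots+\lambda^{M-1})$ together with $|\lambda-1|<1$, one verifies $g\in\mathcal{O}_p[[x]]$, $g(0)=0$, and $|g'(0)-1|\leq|\lambda-1|<1$. The minimal period of $x_0$ under $g$ divides $q$ and cannot equal $1$ (else $f^{\circ M}(x_0)=x_0$ contradicts minimality of $N$), hence equals $q$. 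It thus suffices to show that no such $g$ with these properties can admit a periodic orbit of prime period $q\neq p$ in $D_1(0)$.

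The key step is to establish $i(g^{\circ q})=i(g)$, where $i(h)$ denotes the Weierstrass degree of $h(x)-x$. Assuming the generic case $i(g)<\infty$, pass to the reduction $\bar g\in\overline{\mathbb{F}_p}[[x]]$; since $|g'(0)-1|<1$ one has $\bar g(x)-x=\bar c_{i(g)}\,x^{i(g)}+O(x^{i(g)+1})$ with $\bar c_{i(g)}\neq 0$. A direct induction on $n$ yields $\bar g^{\circ n}(x)-x=n\bar c_{i(g)}\,x^{i(g)}+O(x^{i(g)+1})$. Since $\gcd(q,p)=1$, the coefficient $q\bar c_{i(g)}$ is nonzero in $\overline{\mathbb{F}_p}$, so $i(g^{\circ q})=i(g)$.

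By the Weierstrass Preparation Theorem, both $g-\mathrm{id}$ and $g^{\circ q}-\mathrm{id}$ then have exactly $i(g)$ roots in $D_1(0)$ counted with multiplicity. Every fixed point $\alpha$ of $g$ remains a fixed point of $g^{\circ q}$ of the same multiplicity: in the non-resonant case $g'(\alpha)\neq 1$, one uses that $g'(\alpha)^q\neq 1$ because the only roots of unity in $D_1(1)$ are $p^k$-th roots (Proposition \ref{proposition gamma u}) while $\gcd(p,q)=1$; in the resonant case, factoring $g(x)-x=(x-\alpha)^m v(x)$ with $v(\alpha)\neq 0$ and iterating yields $g^{\circ q}(x)-x=(x-\alpha)^m[qv(\alpha)+O(x-\alpha)]$, with $qv(\alpha)\neq 0$ in characteristic zero. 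But the orbit $\{x_0,g(x_0),\ldots,g^{q-1}(x_0)\}$ supplies $q\geq 2$ additional roots of $g^{\circ q}-\mathrm{id}$ not present among those of $g-\mathrm{id}$, forcing $i(g^{\circ q})>i(g)$ and contradicting the equality obtained above.

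The main obstacle is the degenerate case $i(g)=\infty$, where $\bar g$ is the identity in $\overline{\mathbb{F}_p}[[x]]$ and the Weierstrass Preparation Theorem yields no finite root count. This case can be handled via the iterative Lie-logarithm $g_*=\lim_{n\to\infty}(g^{\circ p^n}-\mathrm{id})/p^n$ (cf.\ \cite{Lubin:1994,Rivera-Letelier:2003thesis}), which realizes $g$ as an endomorphism of a one-dimensional formal group whose torsion points have orders that are powers of $p$, ruling out periodic orbits of period coprime to $p$.
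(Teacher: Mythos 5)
The paper does not actually prove this proposition: it is quoted as known, with references to Lubin \cite[p.~333]{Lubin:1994} and Rivera-Letelier \cite[p.~190]{Rivera-Letelier:2003thesis}, so your argument has to stand on its own. Your reduction to a prime period $q\neq p$ for $g=f^{\circ M}$ is correct, and in the case $i(g)=\mathrm{wideg}(g-\mathrm{id})<\infty$ the counting argument is sound: reduction modulo the maximal ideal gives $i(g^{\circ q})=i(g)$ because $|q|=1$; the multiplicity bookkeeping at fixed points of $g$ works (though in the non-resonant case you should record that $|g'(\alpha)-1|\leq\max\{|g'(0)-1|,|\alpha|\}<1$, so that $g'(\alpha)^q=1$ would put $g'(\alpha)$ in $\Gamma_r$ and force $g'(\alpha)=1$); and the $q$ extra roots supplied by the orbit of $x_0$ contradict the Weierstrass count.

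The genuine gap is the case $i(g)=\infty$, which you relegate to the final paragraph. This case is not vacuous: already $g(x)=(1+p^2)x+px^2$ satisfies the hypotheses, has $\bar g=\mathrm{id}$, and has the nonzero fixed point $x=-p\in D_1(0)$, so maps with infinite Weierstrass degree do carry nontrivial cycles in the open unit disk and must be treated. The proposed remedy---that $g_*$ ``realizes $g$ as an endomorphism of a one-dimensional formal group'' whose torsion is $p$-primary---is not something you may invoke: whether a given power series arises from a formal group is essentially Lubin's conjecture, and even the convergence of $g_*$ and its dynamical interpretation are delicate (the paper mentions $f_*$ only formally, in a remark). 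A repair that stays within your framework is to rescale by a point of the orbit: $h(y):=x_0^{-1}g(x_0y)$ converges on $\overline{D}_1(0)$, and writing $u=h-\mathrm{id}$ with Gauss norm $\|u\|=\mu\in(0,1)$ one checks $\|u\circ h-u\|\leq\mu^{2}$, hence $h^{\circ q}-\mathrm{id}=qu+(\text{error of norm}\leq\mu^{2}<\mu=\|qu\|)$; thus $h^{\circ q}-\mathrm{id}$ and $u$ have the same number of zeros in $\overline{D}_1(0)$ and your contradiction goes through on the closed disk. Alternatively, the standard proof avoids the dichotomy entirely: since $f$ is isometric on $D_1(0)$ it permutes the maximal open subdisks of the smallest closed disk containing the cycle, and the induced map on residues is a translation $y\mapsto y+\bar e$ in characteristic $p$ (because $\overline{f'}\equiv 1$), all of whose nontrivial orbits have length exactly $p$; induction inside the subdisks then forces the period to be a power of $p$.
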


For each integer $n\geq 0$, we will associate with
$f\in\mathcal{O}_p[[x]]$ 
such that $f(0)=0$ and $|f'(0)-1|<1$, the ramification number
\[
i_n:=\textrm{wideg}(f^{p^n}-\textrm{id})-1.
\]
So defined, $i_n$ counts the number of fixed points of $f^{p^n}$ in the open unit disk
different from zero, counting multiplicity.
By Sen's Theorem \cite{Sen:1969}, if
$i_n<\infty$,
then $i_n\equiv i_{n-1}\mod p^n$. Note that 
by the assumptions on $f$, $i_0\geq 1$ and $i_{n+1}\geq i_n +1 $
and together with Sen's Theorem this implies that 
$i_n\geq 1+p+\dots+p^n$.
The power series $f$ is said to be \textit{minimally ramified} if
\[
i_n=1+p+\dots+p^n, \quad \textrm{for all integers $n\geq 1$.}
\]
By a result of Keating \cite{Keating:1992},
$f$ is minimally ramified  if and only if
$i_0=1$ and $i_1=1+p$.
In fact, Rivera-Letelier 
\cite[\textit{Exemple} 3.19]{Rivera-Letelier:2003thesis} 
classified minimally ramified power series.

\begin{proposition}[See \cite{Rivera-Letelier:2003thesis}]\label{proposition minimal}
A power series $f(x)=\sum_{i\geq 1}a_ix^i\in\mathcal{O}_p[[x]]$ with 
 $|f'(0)-1|<1$, 
is minimally ramified if and only if $p\geq 3$, $|a_2|=1$ and $|a_2^2-a_3|=1$.
\end{proposition}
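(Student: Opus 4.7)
\medskip
\noindent\textbf{Proof proposal.}
My plan is to invoke Keating's theorem, stated just before the proposition, which reduces the minimal-ramification condition to the pair of equalities $i_0=1$ and $i_1=1+p$. I would translate each of these ramification equalities into an algebraic condition on the triple $(p,a_2,a_3)$.

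Handling $i_0=1$ first: since $|\lambda-1|<1$, the linear term of $f(x)-x=(\lambda-1)x+a_2x^2+a_3x^3+\cdots$ lies in the maximal ideal, so $\textrm{wideg}(f-\textrm{id})$ equals the least $d\geq 2$ with $|a_d|=1$. Thus $i_0=1$ is equivalent to $|a_2|=1$, which accounts for the first of the three conditions in the proposition.

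Assuming $|a_2|=1$, I would next attack $i_1=1+p$, equivalently $\textrm{wideg}(f^p-\textrm{id})=p+2$. Sen's theorem combined with $i_1>i_0=1$ already guarantees $i_1\geq 1+p$, so the task is to check that the coefficient of $x^{p+2}$ in $f^p-\textrm{id}$ is a unit while every coefficient of $x^j$ for $2\leq j\leq p+1$ lies in the maximal ideal. For the intermediate coefficients, the plan is to expand $f^p=f\circ\cdots\circ f$ recursively and observe that each such coefficient comes out as a polynomial expression carrying a factor $\lambda^k-1$ for some $k\in[1,p-1]$, so is not a unit. For the critical coefficient at $x^{p+2}$, I would reduce modulo the maximal ideal: since $\bar{\lambda}=1$ one has $\bar{f}(x)=x+\bar{a}_2 x^2+\bar{a}_3 x^3+\cdots$ in $\overline{\mathbb{F}}_p[[x]]$, and a direct expansion of the $p$-fold composition truncated at order $x^{p+2}$ in characteristic $p$ should produce
\[
\bar{f}^p(x)-x\equiv c\,(\bar{a}_2^{\,2}-\bar{a}_3)\,x^{p+2}\pmod{x^{p+3}},
\]
for an explicit combinatorial constant $c\in\mathbb{F}_p$ which is a unit exactly when $p\geq 3$. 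This yields $\textrm{wideg}(f^p-\textrm{id})=p+2$ precisely when $p\geq 3$ and $|a_2^2-a_3|=1$, completing the equivalence.

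The hard part will be isolating the leading nonvanishing coefficient of $\bar{f}^p-x$ and verifying that the combinatorial prefactor $c$ is a unit exactly for $p\geq 3$. The cleanest route I see is a careful truncation of the $p$-fold composition at order $x^{p+2}$, retaining only those contributions whose multinomial prefactors survive reduction modulo $p$. Alternatively, one can route the computation through the Lie-logarithm $f_*=\lim_{n\to\infty}(f^{\circ p^n}-\textrm{id})/p^n$ mentioned in the introduction, whose Weierstrass degree is $i_1+1=p+2$ in the minimally ramified case and whose leading coefficient is visibly a nonzero multiple of $a_2^2-a_3$. The case $p=2$ must fail because the relevant prefactor vanishes modulo $2$, forcing $i_1>p+1=3$ and ruling out minimal ramification, which is exactly the source of the restriction $p\geq 3$ in the proposition.
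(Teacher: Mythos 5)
The paper itself does not prove this proposition: it is quoted from Rivera-Letelier's thesis (\textit{Exemple} 3.19), so there is no internal proof to compare against, and your attempt must be judged on its own. Your strategy for $p\ge 3$ --- reduce to $i_0=1$ and $i_1=1+p$ via Keating's criterion, translate $i_0=1$ into $|a_2|=1$, and detect $i_1=1+p$ by computing the coefficient of $x^{p+2}$ in the reduction $\bar f^{\circ p}(x)-x$ over the residue field --- is the standard route and is sound. Two small points there: by Sen's theorem together with $i_1\ge i_0+1$ you get $i_1\ge 1+p$ for free, so the intermediate coefficients of $x^2,\dots,x^{p+1}$ need no separate verification; and the leading coefficient is $c\,\bar a_2^{\,p-1}(\bar a_2^{\,2}-\bar a_3)$ rather than $c\,(\bar a_2^{\,2}-\bar a_3)$ --- harmless once $\bar a_2\neq 0$ is in force (for $p=3$ one finds $\bar f^{\circ 3}(x)\equiv x+\bar a_2^{\,2}(\bar a_2^{\,2}-\bar a_3)x^5 \pmod{x^6}$).

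The genuine gap is your treatment of $p=2$. You claim the combinatorial prefactor vanishes modulo $2$, ``forcing $i_1>p+1=3$.'' It does not: in characteristic $2$ the coefficient of $x^4$ in $\bar f^{\circ 2}(x)-x$ is $\bar a_2(\bar a_2^{\,2}+\bar a_3)$, which is generically a unit. Concretely, for $f(x)=\lambda x+x^2$ with $p=2$ one has $\bar f(x)=x+x^2$ and $\bar f^{\circ 2}(x)=x+x^4$, so $i_0=1$ and $i_1=3=1+p$; yet $\bar f^{\circ 4}(x)=x+x^{16}$, so $i_2=15\neq 1+2+4$ and $f$ is \emph{not} minimally ramified, even though $|a_2|=1$ and $|a_2^2-a_3|=1$. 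This example shows two things: first, the obstruction at $p=2$ lives at level $n=2$, not at level $n=1$ where you look for it; second, the criterion ``$i_0=1$ and $i_1=1+p$ implies minimally ramified'' that you take from Keating is itself false for $p=2$, so it may only be invoked for odd $p$ (the paper's surrounding text also states it without this caveat, but only ever applies it with $p\ge 3$). To complete the ``only if'' direction you would need a separate argument that no power series over $\mathcal{O}_2$ with $|f'(0)-1|<1$ is minimally ramified, and your level-one computation cannot deliver that.
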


Let $g\in\mathcal{O}_p[[x]]$ be a power series of finite Weierstrass degree of the form
$g(x)=\sum_{j\geq 1} a_jx^{j}$.
We denote by $\mathcal{N}(g)$ the principal part of the Newton polygon of $g$,
\textit{i.e.} the convex hull in $\mathbb{R}^2$ of
\[
\mathcal{D}(g)=\{(j,\nu(a_j)):j\in\{1,\dots,\textrm{wideg}(g)\} \}.
\]
Recall that if a line segment of the Newton polygon $\mathcal{N}(g)$ has slope
$\kappa$ and the projection of the
segment to the $j$-axis has length $l$, then $g$ has exactly $l$ roots
of absolute value $p^{\kappa}$, counting multiplicity.
For all integers $n\geq 1$, we associate with 
$\lambda $ the number
\[
\kappa_n:=-\frac{\nu(1-\lambda^{p^n})-\nu(1-\lambda^{p^{n-1}})}{p^n}.
\]
Note that $\kappa_n$ is well defined since by assumption, $\lambda$
is not a root of unity.  

\begin{lemma}\label{lemma_oneslope}
Let $p\geq 3$ and let $f$ be of the form (\ref{minimally ramified maps}).
Then, for every integer
$n\geq 1$, the  Newton polygon $\mathcal{N}(f^{p^{n}}-\textrm{id})$
is obtained from that of $\mathcal{N}(f^{p^{n-1}}-\textrm{id})$ by adding only
\emph{one} single line segment of length
$p^n$ and slope $\kappa_n$. In particular, all periodic points of $f$
of minimal period
$p^n$ are located on the sphere of radius $p^{\kappa_n}$ about the origin.
\end{lemma}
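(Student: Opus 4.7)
My plan is induction on $n$, via the standard Newton polygon dictionary: a segment of slope $\kappa$ and horizontal length $\ell$ in $\mathcal{N}(g)$ encodes exactly $\ell$ roots (with multiplicity) of $g$ of absolute value $p^{\kappa}$ in the open unit disk. Thus pinning down $\mathcal{N}(f^{p^n}-\textrm{id})$ reduces to locating the roots of $f^{p^n}-\textrm{id}$ in $D_1(0)$.

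First I would use minimal ramification to do the bookkeeping. By Proposition \ref{proposition minimal}, $f$ is minimally ramified, so $i_n-i_{n-1}=p^n$, which means there are precisely $p^n$ new roots of $f^{p^n}-\textrm{id}$ in $D_1(0)$ (counted with multiplicity) beyond those already present for $f^{p^{n-1}}-\textrm{id}$, namely the periodic points of minimal period $p^n$. Writing the count $i_n=\mu(x_0)+\sum_{k=1}^n N_k\mu_k p^k$, with $N_k\geq 1$ the number of orbits of minimal period $p^k$ and $\mu_k\geq 1$ their common multiplicity as roots of $f^{p^n}-\textrm{id}$, and equating to the minimal value $1+p+\cdots+p^n$, forces $\mu(x_0)=1$, $\mu_k=1$ and $N_k=1$ for each $k$. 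Hence the new roots consist of a \emph{single} orbit of $p^n$ simple periodic points.

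Next I would invoke isometry. Proposition \ref{proposition isometry} says $f$ is an isometry of $D_1(0)$ and, since $f(0)=0$, yields $|f(x)|=|f(x)-f(0)|=|x|$ on $D_1(0)$. Thus the entire orbit of minimal period $p^n$ lies on one sphere about the origin, so its $p^n$ points contribute a \emph{single} horizontal segment of length $p^n$ to $\mathcal{N}(f^{p^n}-\textrm{id})$. The remaining roots (of periods $p^k$ for $k\leq n-1$) coincide as a set, with the same multiplicity $1$, with the roots of $f^{p^{n-1}}-\textrm{id}$; by the inductive hypothesis they contribute exactly the collection of slopes and lengths appearing in $\mathcal{N}(f^{p^{n-1}}-\textrm{id})$.

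Finally I would pin down the slope. Since $\mathcal{N}(f^{p^n}-\textrm{id})$ starts at $(1,\nu(1-\lambda^{p^n}))$, its inherited left portion matches $\mathcal{N}(f^{p^{n-1}}-\textrm{id})$ up to a vertical translation and ends at the intermediate vertex
\[
\bigl(1+i_{n-1},\;\nu(1-\lambda^{p^n})-\nu(1-\lambda^{p^{n-1}})\bigr).
\]
The lone remaining segment then runs from this vertex to $(1+i_n,0)$ over horizontal length $p^n$, with slope
\[
-\frac{\nu(1-\lambda^{p^n})-\nu(1-\lambda^{p^{n-1}})}{p^n}=\kappa_n,
\]
as required; the final assertion about the sphere of radius $p^{\kappa_n}$ is then an immediate re-reading of this segment via the Newton polygon dictionary. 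I expect the main obstacle to be the bookkeeping of Step 1: one has to use the single identity $i_n=1+p+\cdots+p^n$ to simultaneously rule out both multiplicities $\mu_k\geq 2$ and extra orbits $N_k\geq 2$. Once that is settled, the isometry plus Newton polygon conversion delivers the single new segment and its slope directly.
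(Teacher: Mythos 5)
Your proposal takes essentially the same route as the paper: minimal ramification pins the total new horizontal length at $p^n$, isometry places the new orbit on a single sphere (hence a single segment), and the slope $\kappa_n$ is read off from the leading coefficient $\lambda^{p^n}$ of $f^{p^n}-\mathrm{id}$ via the same vertical-drop bookkeeping that the paper summarizes as the \emph{average weighted slope} of the added segments. The one place your bookkeeping is slightly circular --- taking $N_k\geq 1$ as given in the count $i_n=\mu(x_0)+\sum_{k=1}^n N_k\mu_k p^k$, when that positivity (and the constancy of multiplicities) is really part of the conclusion --- mirrors the paper's own terse assertion that ``we add only one single cycle,'' so it does not represent a departure from the original argument.
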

\begin{proof}
By Proposition \ref{proposition minimal}
$f$ is minimally ramified.
Hence, for all integers $n\geq 1$, the  Newton polygon
$\mathcal{N}(f^{p^{n}}-\textrm{id})$
is obtained from that of $\mathcal{N}(f^{p^{n-1}}-\textrm{id})$
by adding line segments of total length
$\textrm{wideg}(f^{p^n}-\textrm{id})-
\textrm{wideg}(f^{p^{n-1}}-\textrm{id})=p^n$.
That the average weighted slope of these additional segments is $\kappa_n$,
then follows from the fact that
for all integers $n\geq 0$,
$f^{p^n}(x)=\lambda^{p^n}x+\langle x^2 \rangle$.
Also recall that by Proposition \ref{proposition minimal period},
for all periodic points of $f\in\mathcal{G}_{\lambda}$, with $0<|1-\lambda|<1$, the minimal
period is a power of $p$.
Consequently, since $f$ is minimally ramified,
we add only one single cycle going from the
$p^{n-1}$th to the $p^{n}$th iterate of $f$. Moreover,
as stated in Proposition \ref{proposition isometry}, $f$
is isometric on the open unit disk.
Hence, all the periodic points of such a cycle,
of length $p^n$, must be
located on the same sphere about the origin. If follows that this
sphere has to be of radius $p^{\kappa_n}$. This also proves that
in fact we add only one single line segment.
\end{proof}

\begin{lemma}\label{lemma kappa}
Let $p\geq 3$ and let $f$ be of the form (\ref{minimally ramified maps}).
Then,
\begin{enumerate}
\item for $n\geq s+1$, we have \quad
$\kappa_n=-1/p^n$,
\item for $n=s$ where $s\geq 1$, \quad
$\kappa_n=
-(s-t)p^{-s}-p^{-(s-t)}\nu(\gamma_0-\lambda)
+\frac{1}{p}\nu(1-\lambda)$,
and
\item for $n\in[1,s-1]$ where $s\geq 2$, we have \quad
$\kappa_n=-\frac{p-1}{p}\nu(1-\lambda)$.
\end{enumerate}
\end{lemma}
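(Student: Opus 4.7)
The plan is to compute $\kappa_n$ directly from its definition by applying Lemma \ref{lemma distance 1-lambda m char 0,p} to evaluate $\nu(1-\lambda^{p^n})$ and $\nu(1-\lambda^{p^{n-1}})$. Since $0<|1-\lambda|<1$, the integer $m$ from Lemma \ref{lemma closest root of unity} equals $1$, so every $p^n$ is divisible by $m$ and only parts 2 and 3 of Lemma \ref{lemma distance 1-lambda m char 0,p} are relevant, applied according to whether $\nu(p^n)=n$ is below or at/above $s$. I would split into the three ranges of $n$ exactly as stated in the lemma.

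First, for $n\geq s+1$, both $p^n$ and $p^{n-1}$ satisfy $\nu\geq s$, so part 3 of Lemma \ref{lemma distance 1-lambda m char 0,p} gives
\[
\nu(1-\lambda^{p^{n}})=-t+n+p^t\nu(\gamma_0-\lambda),\qquad \nu(1-\lambda^{p^{n-1}})=-t+(n-1)+p^t\nu(\gamma_0-\lambda),
\]
whose difference is $1$, producing $\kappa_n=-1/p^n$. Next, for $n\in[1,s-1]$ with $s\geq 2$, both exponents lie below $s$, so part 2 gives $\nu(1-\lambda^{p^{j}})=p^{j}\nu(1-\lambda)$ for $j\in\{n-1,n\}$, and the difference $(p^n-p^{n-1})\nu(1-\lambda)$ divided by $-p^n$ yields $\kappa_n=-\frac{p-1}{p}\nu(1-\lambda)$.

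The borderline case $n=s$ (with $s\geq 1$) is the one that requires care since it mixes parts 2 and 3: $\nu(p^{s-1})=s-1<s$ calls for part 2, while $\nu(p^s)=s$ calls for part 3. Thus
\[
\nu(1-\lambda^{p^{s}})=-t+s+p^t\nu(\gamma_0-\lambda),\qquad \nu(1-\lambda^{p^{s-1}})=p^{s-1}\nu(1-\lambda),
\]
and substituting into the definition of $\kappa_n$ and simplifying gives the stated formula
\[
\kappa_s=-(s-t)p^{-s}-p^{-(s-t)}\nu(\gamma_0-\lambda)+\tfrac{1}{p}\nu(1-\lambda).
\]
There is no genuine obstacle; the only thing to be mindful of is that when $s=1$ the range $[1,s-1]$ is empty (so case 3 is vacuous) and that for $s=0$ the lemma says nothing about cases 2 and 3, consistent with $n\geq s+1=1$ being handled by case 1. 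The result then drops out of bookkeeping, invoking Lemma \ref{lemma distance 1-lambda m char 0,p} three times.
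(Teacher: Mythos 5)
Your proof is correct and takes essentially the same route as the paper: both compute $\nu(1-\lambda^{p^n})$ directly from Lemma \ref{lemma distance 1-lambda m char 0,p} (with $m=1$ since $0<|1-\lambda|<1$), splitting into the three ranges of $n$ according to whether $n-1$ and $n$ lie below or at/above $s$, then divide the difference by $-p^n$ to get $\kappa_n$. The only superficial difference is that you record the valuation $\nu(1-\lambda^{p^k})=(k-t)+p^t\nu(\gamma_0-\lambda)$ explicitly before differencing while the paper differences the absolute values directly, but the bookkeeping is identical.
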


\begin{proof}
By assumption, $0<|1-\lambda|<1$.
Hence, by Lemma \ref{lemma distance 1-lambda m char 0,p},
for all integers $k\geq s$,
\[
|1-\lambda^{p^k}|=p^t|p^k| |\gamma_0-\lambda|^{p^t}.
\]
Recall that by definition,
 for all non-zero elements $x,y\in\mathbb{C}_p$, we have $\nu(x)=-\log_p|x|$ and
$\nu(xy)=\nu(x)+\nu(y)$.
Consequently, for every integer $n\geq s+1$, we obtain
\[
\nu(1- \lambda^{p^n}) - \nu(1-\lambda^{p^{n-1}}) = \nu(p^n)-\nu(p^{n-1})=1.
\]
This proves the first statement of the lemma.
As to the second statement note that for $k=s$,
\[
|1-\lambda^{p^k}|=p^{-(s-t)}|\gamma_0-\lambda|^{p^t}.
\]
Also note that if $k\in[0,s-1]$,
then by
Lemma \ref{lemma distance 1-lambda m char 0,p} we have
\[
|1-\lambda^{p^k}|=|1-\lambda|^{p^{k}}.
\]
Hence, for $n=s$ we obtain
\[
\nu(1-\lambda^{p^n})-\nu(1-\lambda^{p^{n-1}})
=
(s-t)+p^t\nu(\gamma_0-\lambda)-p^{s-1}\nu(1-\lambda).
\]
Finally, concerning the third statement,
provided that $s\geq 2$, for $n\in[1,s-1]$ we obtain
\[
\nu(\lambda^{p^n}-1)-\nu(1-\lambda^{p^{n-1}})
=
(p^{n}-p^{n-1})\nu(1-\lambda).
\]
This completes the proof of the lemma.
\end{proof}

\begin{proof}[Proof of Lemma \ref{lemma localization of periodic points}]
The fact that the minimal periods are all prime powers is a consequence
of Proposition \ref{proposition minimal period}. The absolute value of the fixed point $x_0\neq 0$
is $|1-\lambda|$, since by assumption we have $i_0=1$.
By the assumptions on $f$,
Lemma \ref{lemma_oneslope} and Lemma \ref{lemma kappa} apply,
and we obtain the second, third and fourth statement of the lemma.
As to the statements concerning $\rho$, first note that by the definition of $s$ we have
\[
|1-\lambda|<R(s+1)=p^{-1/p^s(p-1)},
\]
and hence
\[
|1-\lambda|<|1-\lambda|^{\frac{p-1}{p}}<p^{-1/p^{s+1}}.
\]
In addition, by definition
\[
p^{-(s-t)/p^s}=R(s)^{(s-t)\frac{p-1}{p}}\leq |1-\lambda|^{(s-t)\frac{p-1}{p}},
\]
so that
\[
\Psi(\lambda)
\leq 
|1-\lambda|^{\frac{(s-t)(p-1)-1}{p}}
|1-\lambda|^{\frac{1}{p^{s-t}}}
\leq  |1-\lambda|^{\frac{p-1}{p}},
\]
where the inequalities become an equality if and only if
$\gamma_0=1$ (so that $t=s$).  
Accordingly, apart from zero, $f$
has no periodic point in $D_{\rho}(0)$ where
$\rho=\min\{|1-\lambda|,\Psi(\lambda)\}$.

Concerning the last statement of the lemma, 
for the equality $\rho=\Psi(\lambda)$ to hold it is
necessary that $\gamma_0\neq 1$. In other words
we must have $s-t>0$, so that $s\geq 1$
and $|1-\lambda|=R(s)$.
In this case we have $p^{-1/p^s}=|1-\lambda|^{(p-1)/p}$ 
which implies
\[
\Psi(\lambda)=
|1-\lambda|^{\frac{(s-t)(p-1)-1}{p}}
|\gamma_0-\lambda|^{\frac{1}{p^{s-t}}}.
\]
As $|\gamma_0-\lambda|<|1-\lambda|$ we certainly 
have that
$\Psi(\lambda)<|1-\lambda|$ for $s-t\geq 2$.
If $s-t=1$ we have $\Psi(\lambda)\leq|1-\lambda|$
if and only if $|\gamma_0-\lambda|\leq|1-\lambda |^2$.
The latter is a solution since $p\geq 3$ 
and for $s-t=1$
we have $R(s-1)\leq|\gamma_0-\lambda|<R(s)=|1-\lambda|$.
This completes the proof of the lemma. 
\end{proof}

\begin{proof}[Proof of Corollary \ref{corC no periodic boundary}]
Let $p\geq 3$, $f$ be of the form (\ref{minimally ramified maps}), and
suppose $1/p<|1-\lambda|<1$.
By Corollary  \ref{corB quadratic power series},
the radius of the  linearization disk $\Delta_f$ is given by
$r(f)=|1-\lambda|^{-1/p}\tilde{r}(\lambda)$. By the assumptions on $\lambda$,
$m=1$, and
\begin{equation*}
r(f)=|1-\lambda|^{-1/p}R(s+1)p^{-\frac{s-t}{p^s}}|1-\lambda|^{s\frac{p-1}{p}}
|\gamma_0-\lambda |^{1/p^{s-t}}.
\end{equation*}
Suppose that $s\geq 1$. Then, $R(s+1)=R(s)^{1/p}$ and 
\begin{equation}\label{bound quadratic linearization}
r(f) = R(s)^{\frac{1}{p}}|1-\lambda|^{\frac{p-1}{p}s}\Psi(\lambda). 
\end{equation}
By definition $R(s)\leq |1-\lambda|$.
Consequently, for $s\geq 1$ we obtain
\begin{equation}\label{bound r(P)}
r(f) \leq |1-\lambda|\Psi(\lambda), 
\end{equation}
with equality if and only if $s=1$ and $|1-\lambda|=R(1)$.
By Lemma \ref{lemma localization of periodic points}, apart from zero,
$f$ has no periodic point in the open disk $D_{\rho}(0)$,
where the radius
$\rho=\min\{|1-\lambda|,\Psi(\lambda)\}$.
Together with the observation (\ref{bound r(P)}),
this  completes the proof of the corollary for the case $s\geq 1$.

Finally, if $s=0$ we have $1/p<|1-\lambda|<R(1)$
and by Lemma \ref{lemma closest root of unity}
$\gamma_0=1$ and $s=t$. 
Accordingly,  $r(f)=|1-\lambda|^{-1/p} R(1)|1-\lambda|$.
Note that $p^{1/p}R(1)=R(2)$, and hence by the assumption
$|1-\lambda|>1/p$ we obtain $r(f)<R(2)|1-\lambda|$. As
$\rho=|1-\lambda|$ for $s=0$, this completes the proof
of the corollary.
\end{proof}


\subsection{Example of linearization disk and periodic points of a quadratic map}\label{section example quadratic}

Let $p=3$ and put
\[
\tilde{P}(x):=\lambda x +x^2\in\mathbb{C}_3, \quad\textrm{with }
\lambda:=1+3^{\frac{1}{4}}.
\]
Apart from zero, $\tilde{P}$ has a fixed point of absolute value
$|1-\lambda|=3^{-1/4}$.
Note that $R(1)=3^{-1/2}$, $R(2)=3^{-1/6}$, $R(1)<|1-\lambda|<R(2)$,
and hence $s=1$ and,  in view of
Lemma \ref{lemma distance 1-lambda m char 0,p},
$\gamma_0=1$.
By (\ref{Psi}) we then have $\Psi(\lambda)=3^{-\frac{1}{6}}$,
and hence by (\ref{bound quadratic linearization}),
the radius of the corresponding linearization disk
$r(\tilde{P})=3^{-1/2}$.
By the example of Keating \cite[p. 321]{Keating:1992}
(and more generally Rivera-Letelier \cite[p. 191]{Rivera-Letelier:2003thesis}), $\tilde{P}$
is minimally ramified and Lemma
\ref{lemma localization of periodic points} applies.
The principal part of the corresponding Newton
polygon for the 9th iterate,
$\mathcal{N}(\tilde{P}^9-\textrm{id})$,
has three segments shown in figure \ref{newton polygon of P^3(x)-x}.
The distribution of the corresponding periodic points,
\textit{i.e.} roots of $\mathcal{N}(\tilde{P}^9-\textrm{id})$,
outside the linearization disk $\Delta_{\tilde{P}}$ is illustrated in figure
\ref{figure periodic P}.

\begin{remark}\label{remark post-critical}
In the complex field case \cite{Carleson/Gamelin:1991,Milnor:2000}, the boundary of the linearization disk,
$\partial \Delta_f$,
is contained in the closure of the post-critical set, the union of all
forward images $f^k(c)$, where $k\geq 1$ is an integer
and where $c$ ranges over all critical points of $f$. With
$\tilde{P}$ as above,
$c=-\lambda/2\in S_1(0)$,
$\tilde{P}(c)=-\lambda^2/4\in S_1(0)$, and
$\tilde{P}^2(c)=\lambda^3(\lambda-4)/16\in S_{|\lambda -1|}(0)$. As $\tilde{P}$ is isometric
on $S_{|1-\lambda|}(0)$, the forward iterates will stay on this sphere for $k\geq 2$,
and hence,
the intersection between the post-critical set and the boundary of the linearization
disk is empty in this case.
\end{remark}

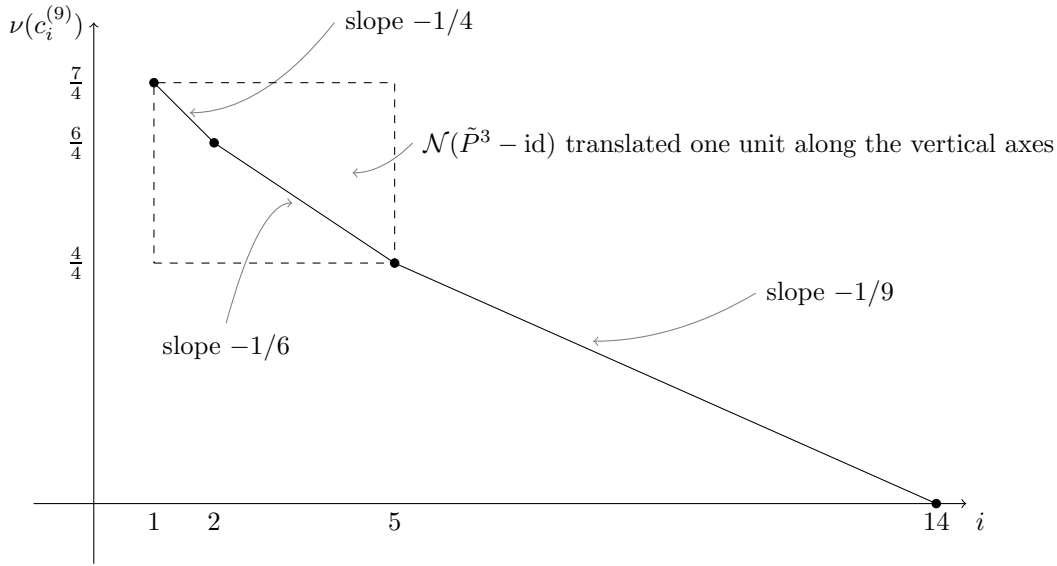
\begin{figure}
\begin{tikzpicture}[scale=0.8]

\draw[->] (-1,0) -- (14.5,0);
   \node [below right] at  (14.5,0) {$i$};
   \node [below] at (1,0) {1};
   \node [below] at (2,0) {2};
   \node [below] at (5,0) {5};
   \node [below] at (14,0) {14};

\draw[->] (0,-1) -- (0,8);
   \node[left] at (0,8) {$\nu(c_i^{(9)})$} ;
   \node[left] at (0,7) {$\frac{7}{4}$} ;
   \node[left] at (0,6) {$\frac{6}{4}$} ;
   \node[left] at (0,4) {$\frac{4}{4}$} ;

\draw (1,7) -- (2,6) -- (5,4) -- (14,0);


\draw [fill] (1,7) circle (0.07);
\draw [fill] (2,6) circle (0.07);
\draw [fill] (5,4) circle (0.07);
\draw [fill] (14,0) circle (0.07);

\draw[<-, gray] (1.6,6.5) parabola (4,8);
\node[right, ultra thick] at (4,8) {slope $-1/4$};

\draw[<-, gray] (3.3,5) parabola (2.2,3);
\node[below, ultra thick] at (2.2,3) {slope $-1/6$};

\draw[<-, gray] (8.3,2.7) parabola (11,3.5);
\node[right, ultra thick] at (11,3.5) {slope $-1/9$};

\draw[dashed] (1,7) rectangle (5,4);

\draw [<-, gray] (4.3,5.5) parabola (5.3,6);
\node[right, ultra thick] at (5.3,6)  {$\mathcal{N}(\tilde{P}^3-\textrm{id})$ translated one unit along the vertical axes} ;

\end{tikzpicture}
\caption{
The Newton polygon $\mathcal{N}(\tilde{P}^9-\textrm{id})$.
$\tilde{P}$ has a fixed point of absolute value
$|1-\lambda|=3^{-\frac{1}{4}}$, three periodic points of minimal period $3$ of absolute value
$\Psi(\lambda)=3^{-\frac{1}{6}}$, and nine periodic points of minimal period $9$ of absolute value
$3^{-\frac{1}{9}}$.
}\label{newton polygon of P^3(x)-x}
\end{figure}

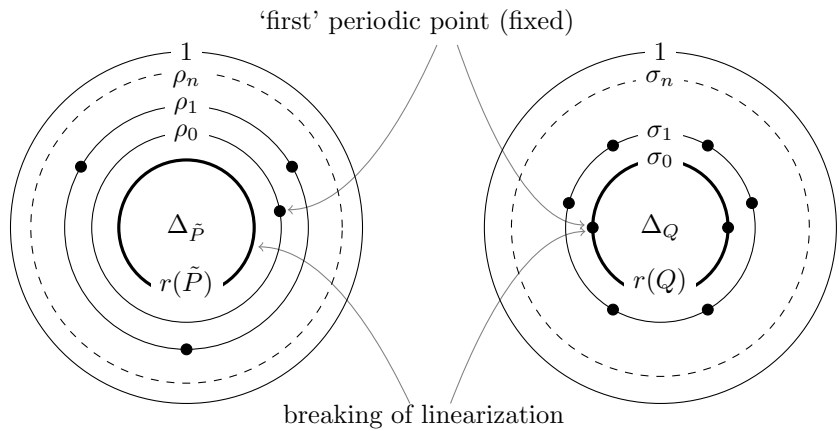
\begin{figure}
\begin{center}
\begin{tikzpicture}[scale=0.9]

\draw[very thick] (0,0) circle (1cm);
\node[fill=white] at (0,0) {$\Delta_{\tilde{P}}$};

\node[fill=white] at (0,-0.8) {$r(\tilde{P})$};

\draw[<-, gray] (-15:1.1)  parabola  (3.3,-2.6) ;
\node[above] at (3.5,-3.1) {breaking of linearization};
\draw[<-, gray] (7,0) +(183:1.11)  parabola  (3.9,-2.6) ;

\draw (0,0) circle (1.4cm);
\node[fill=white] at (0,1.4) {$\rho_0$};

\draw[fill] (10:1.4) circle (0.08);
\draw[<-,gray]  (10:1.52)  parabola (3.7,2.7);
\node[above] at (3.4,2.7) {`first' periodic point (fixed)};
\draw[<-,gray]  (7,0) +(178:1.12) parabola (4,2.7) ;

\draw (0,0) circle (1.8cm);
\node[fill=white] at (0,1.8) {$\rho_1$};

\draw[fill] (30:1.8) circle (0.08);
\draw[fill] (150:1.8) circle (0.08);
\draw[fill] (270:1.8) circle (0.08);

\draw[dashed] (0,0) circle (2.3cm);
\node[fill=white] at (0,2.2) {$\rho_n$};

\draw (0,0) circle (2.6cm);
\node[fill=white] at (0,2.6) {$1$};


\draw[very thick] (7,0) circle (1cm);
\node[fill=white] at (7,0) {$\Delta_{Q}$};

\node[fill=white] at (7,-0.8) {$r(Q)$};
\node[fill=white] at (7,1) {$\sigma_0$};

\draw[fill] (7,0)  +(180:1.0) circle (0.08);
\draw[fill] (7,0)  +(0:1.0) circle (0.08);

\draw (7,0) circle (1.4cm);
\node[fill=white] at (7,1.4) {$\sigma_1$};

\draw[fill] (7,0)  +(60:1.4) circle (0.08);
\draw[fill] (7,0)  +(120:1.4) circle (0.08);
\draw[fill] (7,0)  +(165:1.4) circle (0.08);
\draw[fill] (7,0)  +(240:1.4) circle (0.08);
\draw[fill] (7,0)  +(300:1.4) circle (0.08);
\draw[fill] (7,0)  +(15:1.4) circle (0.08);

\draw[dashed] (7,0)  circle (2.2cm);
\node[fill=white] at (7,2.2) {$\sigma_n$};

\draw (7,0) circle (2.6cm);
\node[fill=white] at (7,2.6) {$1$};

\end{tikzpicture}

\end{center}

\caption{
To the left, the linearization disk $\Delta_{\tilde{P}}(0)$
of radius $r(\tilde{P})=3^{-\frac{1}{2}}$
and periodic points;
$\rho_0=3^{-\frac{1}{4}}$ fixed;
$\rho_1=\Psi(\lambda)=3^{-\frac{1}{6}}$ minimal period $3$;
$\rho_n=3^{-\frac{1}{3^n}}$ ($n\geq2 $)  minimal period
$3^{n}$.
To the right,
 $Q(x)=(1+x)^{p+1}-1\in\mathbb{C}_p[x]$;
periodic points of minimal period $p^n$, $n\geq 0$, distributed on spheres of radius
$\sigma_n=p^{-1/(p^n(p-1))}$. The
linearization of $Q$ is broken by fixed points of absolute value
$r(Q)=\sigma_0$. In this case the conjugacy
$H_{Q}(x)=\log_p(1+x)$.
See \cite{Arrowsmith/Vivaldi:1994} for more details.
}\label{figure periodic P}

\end{figure}


\newpage

\section*{Acknowledgements}

I would like to thank Andrei Khrennikov and Juan Rivera-Letelier for fruitful
discussions and suggestions. I am also grateful for 
comments and corrections from the reviewers that certainly improved the
presentation.


\addcontentsline{toc}{section}{References}

\bibliographystyle{plain}


\end{document}